\newcommand{\norm}[1]{\left\Vert#1\right\Vert}
\newcommand{\abs}[1]{\left\vert#1\right\vert}
\newcommand{\set}[1]{\left\{#1\right\}}
\newcommand{\Real}{\mathbb{R}}
\newcommand\bR{\mathbb{R}}
\newcommand\bF{\mathbb{F}}
\newcommand\fra{\mathfrak{a}}
\newcommand\frb{\mathfrak{b}}
\newcommand\frc{\mathfrak{c}}
\newcommand\cB{\mathcal{B}}
\newcommand\cF{\mathcal{F}}
\newcommand\cI{\mathcal{I}}
\newcommand\cJ{\mathcal{J}}
\newcommand\cK{\mathcal{K}}
\newcommand\cL{\mathcal{L}}
\newcommand\cM{\mathcal{M}}
\newcommand\cN{\mathcal{N}}
\newcommand\cP{\mathcal{P}}
\newcommand{\<}{\langle}
\renewcommand{\>}{\rangle}
 \newtheorem{thm}{Theorem}[section]
 \newtheorem{cor}[thm]{Corollary}
 \newtheorem{lem}[thm]{Lemma}
 \theoremstyle{definition}
 \newtheorem{defn}[thm]{Definition}
 \theoremstyle{remark}
 \newtheorem{rem}[thm]{Remark}
 \newtheorem*{ex}{Example}
\newtheorem{assumption}{Assumption}[section]
\numberwithin{equation}{section}
\author[I. Gy\"ongy]{Istv\'an Gy\"ongy}
\address{School of Mathematics and Maxwell Institute \\
          University of Edinburgh \\
          King's Buildings \\
          EH9 3JZ Edinburgh, United Kingdom}
\email{gyongy@maths.ed.ac.uk}
\author[P. R. Stinga]{Pablo Ra\'ul Stinga}
\address{Department of Mathematics \\
          The University of Texas at Austin \\
          1 University Station, C1200\\
          Austin, TX 78712-1202, United States of America}
\email{stinga@math.utexas.edu}
\thanks{The second author was supported by Ministerio de Ciencia e Innovaci\'{o}n de Espa\~{n}a MTM2008-06621-C02-01 and Grant COLABORA 2010/01 from Planes Riojanos de I+D+I}
\keywords{Wong-Zakai approximation, nonlinear filtering}
\subjclass[2010]{60H15, 60H35, 93E11}
\begin{document}

\title[Rate of convergence]{Rate of convergence of Wong-Zakai approximations \\ for stochastic partial differential equations}

\begin{abstract}
In this paper we show that the rate of convergence of Wong-Zakai approximations for stochastic partial differential equations driven by Wiener processes is essentially the same as the rate of convergence of the driving processes $W_n$ approximating the Wiener process, provided the area processes of $W_n$  also converge to those of $W$ with that rate. We consider non-degenerate and also degenerate stochastic PDEs with time dependent coefficients.
\end{abstract}

\maketitle

\section{Introduction}

Consider for each integer $n\geq1$ the stochastic PDE
\begin{equation}                                                                               \label{eq for un}
du_n(t,x)=(L_nu_n(t,x)+f_n)\,dt+(M_n^ku_n(t,x)+g_n^k)\,dW^k_n(t),
\end{equation}
for $(t,x)\in H_T=(0,T]\times\bR^d$, for a fixed $T>0$, with initial condition
\begin{equation}                                                                                \label{ini}
u_n(0,x)=u_{n0}(x), \quad x\in\Real^d,
\end{equation}
given on a probability space $(\Omega,\cF,P)$,
where $L_n$ and $M_n^k$ are second
and first order differential operators in $x\in\bR^d$, respectively
for every $\omega\in\Omega$. The free terms,
$f_n$ and $g_n=(g_n^k)$ are random fields,
and $W_n=(W_n^k)$ is a continuous
$d_1$-dimensional stochastic process
with finite variation over $[0,T]$, for $k=1,\ldots,d_1$.

Unless otherwise stated we use the summation convention with respect to repeated indices throughout the paper. The summation convention is not used
if the repeated index is the subscript $n$.

 The operators $L_n$, $M_n^k$ are of the form
$$
L_n=a_n^{ij}(t,x)D_iD_j+a_n^i(t,x)D_i+a_n(t,x),
\qquad M_n^k=b^{ik}_n(t,x)D_i+b_{n}^{k}(t,x),
$$
where $a_n^{ij},\ldots,b_{n}^{k}$ are real-valued
bounded
functions on $\Omega\times[0,T]\times\bR^d$
for all $i,j=1,...,d$, $k=1,...,d_1$, and integers $n\geq1$,
$D_i=\frac{\partial}{\partial x^i}$ for $i=1,2,...,d$, and  $x^i$ is the
$i$-th co-ordinate of $x\in\bR^d$. The free terms, $f_n$, $g_n^1$,...,$g^{d_1}_n$ are
real-valued functions on
$\Omega\times[0,T]\times\bR^d$ for each $n$.
We assume that $L_n$ is either uniformly elliptic or degenerate elliptic for all $n$.

Assume that
the operators $L_n$, $M_n^k$, the free terms
$f_n$, $g_n^k$ and the initial data $u_{n0}$
converge to some operators
\begin{equation}                                                                                      \label{operators L and M}
L=a^{ij}(t,x)D_{ij}+a^i(t,x)D_i+a(t,x),\quad M^k=b^{ik}(t,x)D_i+b^k(t,x),
\end{equation}
random fields $f$, $g^k$ and initial data
$u_0$ respectively, and $W_n(t)$ converges to
a $d_1$-dimensional Wiener process
in probability, uniformly in $t\in[0,T]$.
Then
under some smoothness conditions on the coefficients of
$L_n$, $L$, $M_n^k$, $M^k$ and on the data $u_{0n}$, $f_n$, $g_n^k$,
$u_0$, $f$, $g^k$, and under some
additional conditions
on the convergence of the related area processes
and on the growth of the auxiliary process
$B_n$ (defined in \eqref{arean} and \eqref{def Bn} below),
the solution $u_n$ to \eqref{eq for un}
converges in probability to a random field $u$
that satisfies the stochastic PDE
\begin{equation}                                                                             \label{eq}
du(t,x)=(Lu(t,x)+f)\,dt+(M^ku(t,x)+g^k)\,\circ dW^k(t),\quad(t,x)\in H_T
\end{equation}
with initial condition
\begin{equation}                                                                              \label{ini0}
u(0,x)=u_0(x),     \quad x\in\bR^d,
\end{equation}
where `$\circ$' indicates the Stratonovich differential.
(See, e.g., \cite{G1} and \cite{G2}.) When $M^k$ and $g^k$
do not depend on the variable $t$, then
$$
(M^ku(t,x)+g^k)\circ \,dW^k=\tfrac{1}{2}(M^kM^ku(t,x)+M^kg^k)\,dt
+(M^ku(t,x)+g^k)\,dW^k.
$$

One of the important questions in the analysis of approximation schemes is the estimation of the speed of convergence.  In this paper we show that,
if the continuous finite variation processes  $W_n$ and their
area processes converge almost surely to a Wiener
process $W$ and to its area processes, respectively, with a given rate,
then $u_n(t)$ converges almost surely with {\it essentially} the same rate.
 The results of this paper are motivated by a question
about robustness of nonlinear filters for partially observed processes,
$(X(t),Y(t))_{t\in[0,T]}$. For a large class of
{\it signal and  observation} models, the signal $X$ and the observation
$Y$ are governed by stochastic differential
equations with respect to Wiener processes, and a basic assumption is that
the signal process is a non-degenerate It\^o process. Thus the signal is modelled
by a process, which has infinite (first) variation on any (small) finite interval.
In practice, however, due to the smoothing effect of measurements,
the ``signal data" is a process which has finite variation on any finite interval.
This process can be viewed as an approximation $Y_n$ to $Y$,
and it is natural to assume that $Y_n$ and its area processes
converge almost surely in the sup norm to $Y$ and its area processes, with some speed.
By a direct application of the main theorems of the present article one can show that
the ``robust filtering equation",
with $Y_n$ in place of $Y$, admits a unique solution $p_n$ which
converges
almost surely with almost the same order to the conditional density of
$X(t)$ given the observation
$\{Y(s): s\in[0,t]\}$.
The filtering equations in case of correlated signal and observation noise
are stochastic PDEs with coefficients depending
on the observations. Thus approximating the observations we approximate
also the differential operators in the stochastic PDEs. This is why we consider
equation \eqref{eq for un} with random operators $L_n$ and $M^k_n$ depending
also on $n$, the parameter of the approximation.

Our results improve and generalise the results
of \cite{Gyongy-Shmatkov} and \cite{Shmatkov}, where only half of the
order of convergence  of $W_n$ is obtained
for the order of convergence of $u_n$.
Moreover, our conditions are weaker,
and we prove the optimal rate also in the case of
degenerate stochastic PDEs, which allows to get our rate of convergence
result also in the case of degenerate signal and observation models.

Wong-Zakai  approximations of stochastic PDEs were
 studied intensively
in the literature. See, for example, \cite{AT}-\cite{G2},
\cite{Gyongy-Shmatkov}-\cite{H}, \cite{Shmatkov}-\cite{TZ},
and the references therein. With the exception of \cite{BF},
\cite{H}, \cite{Gyongy-Shmatkov} and \cite{Shmatkov}
 the papers above
prove convergence results of Wong-Zakai approximations
for stochastic PDEs with various generalities, but do not
present rate of convergence estimates. Wong-Zakai type
approximation results for semilinear and fully nonlinear
SPDEs are obtained via rough path approach in
\cite{FO1}--\cite{FO3}.

In \cite{BF},
the initial value problem
\eqref{eq for un}--\eqref{ini} is considered with non-random
coefficients and without
free terms,
when $W_n$ are polygonal approximations to
the Wiener process $W$.
By the method of characteristics it is proved that
$u_n(t,x)$ converges almost surely, uniformly
in $(t,x)\in[0,T]\times\bR^d$.
Though the rate of convergence
of $u_n$ to $u$ is not stated explicitly in \cite{BF},
from the rate of convergence result proved in \cite{BF}
for the characteristics,
one can easily deduce that for every $\kappa<1/4$ there
exists a finite random variable $\xi_{\kappa}$ such that
almost surely $|u_n(t,x)-u(t,x)|\leq \xi n^{-\kappa}$
for all $t\in[0,T]$ and  $x\in\bR^d$.
We note that for polygonal approximations
the almost sure order of convergence of $W_n$ and its area processes
are of order $\kappa<1/2$, and thus by our paper the almost sure
rate of convergence of the Wong-Zakai approximations
is the same $\kappa<1/2$, in Sobolev norms, and via
Sobolev's embedding in the supremum norm as well.
In \cite{H} the rate of convergence of Wong-Zakai approximations
of stochastic PDEs driven by Poisson random measures is investigated.

Let us conclude with introducing some notation used throughout the paper.
All random objects are given on a fixed
probability space $(\Omega,\cF,P)$ equipped with a right-continuous filtration
$\bF=(\cF_t)_{t\geq0}$, such that $\cF_0$ contains all the $P$-null
sets of the complete $\sigma$-algebra $\cF$.
The $\sigma$-algebra of predictable subsets of $[0,\infty)\times\Omega$
is denoted by $\cP$ and the
$\sigma$-algebra of Borel subsets of $\bR^d$ is denoted by $\cB(\bR^d)$.
The notation $C_0^{\infty}=C_0^{\infty}(\bR^d)$ stands for the space of real valued smooth functions
with compact support on $\bR^d$.
For an integer $m$ we use the notation $H^m$ for the Hilbert-Sobolev space
$W^m_2(\bR^d)$
of generalised functions on $\bR^d$. For $m\geq 0$ it is the closure of
$C_0^{\infty}$
 in the norm $|\cdot|_m$ defined by
$$
|f|_m^2=\sum_{|\alpha|\leq m}\int_{\bR^d}|D^{\alpha}f(x)|^2\,dx,
$$
where $D^{\alpha}=D^{\alpha_1}_1D^{\alpha_2}_2\cdots D^{\alpha_d}_d$
for multi-indices $\alpha=(\alpha_1,\alpha_2,\alpha_d)\in\{0,1,\ldots\}^d$,
and $D^0$
is the identity operator. For $m<0$,  $H^m$ is the closure of $C_0^{\infty}$
in the norm
$$
|f|_{m}=\sup_{g\in C^\infty_0, |g|_{-m}\leq 1}(f,g)_0,
$$
where $(f,g)_0$ denotes the inner product in $L_2=H^0$.
We define in the same way the Hilbert-Sobolev space
$H^m=H^m(\bR^l)$ of $\bR^l$-valued functions $g=(g^1,\ldots,g^l)$
on $\bR^d$, such that $|g|^2_m=\sum_{k=1}^l|g^k|^2_m$.
We use the notation $(\cdot\,,\cdot)_m$ for the inner product
in $H^m$, and for $m=0$ we often use the notation $(\cdot\,,\cdot)$
instead of $(\cdot\,,\cdot)_0$. For $m\geq0$ denote by $\<\cdot\,,\cdot\>_m$ the duality
product between $H^{m+1}$ and $H^{m-1}$, based
on the inner product $(\cdot\,,\cdot)_m$
in $H^m$. For real numbers $A$ and $B$
we set $A\vee B=\max\set{A,B}$
and $A\wedge B=\min\set{A,B}$.
For sequences of random variables
$(a_n)_{n=1}^{\infty}$ and $(b_n)_{n=1}^{\infty}$
the notation $a_n=o(b_n)$ means the existence of
a sequence of random variables $\xi_n$ converging
almost surely to zero such that almost surely
$|a_n|\leq\xi_n|b_n|$ for all $n$. The notation $a_n=O(b_n)$
means the existence of a finite random variable $\eta$
such that almost surely $|a_n|\leq \eta |b_n|$ for all $n$.

\section{Formulation of the results}                                                        \label{Section:Formulation}

Let $W=(W(t))_{t\in[0,T]}$ be a $d_1$-dimensional
Wiener martingale with respect
to $\bF$, and consider for every integer $n\geq1$
an $\bR^{d_1}$-valued
$\cF_t$-adapted continuous process
$W_n=(W_n(t))_{t\in[0,T]}$ of finite
variation. Define the area processes of $W$ and $W_n$ as
\begin{equation}                                                                                                         \label{area}
A^{ij}(t):=\frac{1}{2}\int_0^tW^i(s)\,dW^j(s)-W^j(s)~dW^i(s)
,\quad i,j=1,2,...,d_1,
\end{equation}
\begin{equation}\label{arean}
A_n^{ij}(t):=\frac{1}{2}
\int_0^tW^i_n(s)~dW^j_n(s)-W^j_n(s)~dW^i_n(s),
\quad i,j=1,2,...,d_1,
\end{equation}
and also the process
\begin{equation}                                                                                                         \label{def Bn}
B_n^{ij}(t):=\int_0^t(W^i(s)-W^i_n(s))\,dW^j_n(s),\qquad i,j=1,\ldots,d_1,
\end{equation}
that will play a crucial role.
We denote by $\norm{q}(t)$ the first variation of a process $q$
over the interval $[0,t]$ for $t\leq T$.

Let $\gamma>0$ be a fixed real number and assume that the following
conditions hold.

\begin{assumption}                                                                                   \label{assumption 15.04.06}
For each $\kappa<\gamma$ almost surely
\begin{enumerate}
    \item[(i)] $\sup_{t\leq T}\abs{W(t)-W_n(t)}=O(n^{-\kappa})$,
    \item[(ii)] $\sup_{t\leq T}\abs{A^{ij}(t)-A^{ij}_n(t)}=O(n^{-\kappa})$
 \quad $i\neq j$,
    \item[(iii)] $||B_n^{ij}||(T)=o(\ln n)$ for all $i,j=1,\ldots,d_1$.
\end{enumerate}
\end{assumption}
The following remark is shown in \cite{Gyongy-Shmatkov}.
\begin{rem}                                                                                                 \label{Rem:Sn and An}
Define the matrix-valued process $S_n=(S_n^{ij}(t))$, $t\in[0,T]$ by
\begin{align}
   S_n^{ij}(t)
   &= \int_0^t(W^{i}(s)-W^{i}_n(s))\,dW^j_n(s)-\frac{1}{2}\<W^i,W^j\>(t)           \label{Sn with qn} \\
   &= \int_0^t(W^{i}(s)-W^{i}_n(s))~dW^j_n(s)-\frac{1}{2}\delta_{ij}t,
   \qquad i,j=1,2,...,d_1,  \nonumber
\end{align}
for each integer $n\geq1$, where $\<W^i,W^j\>$ denotes the quadratic covariation
process of $W^i$ and $W^j$, and $\delta_{ij}=1$ for $i=j$ and it is zero
otherwise.
Then by It\^o's formula for $q^{ij}_n:=(W^i-W_n^i)(W^j-W_n^j)$ we have
$$
S_n^{ij}(t)+S_n^{ji}(t)=q^{ij}_n(0)-q^{ij}_n(t)+R^{ij}_n(t)+R^{ji}_n(t)
$$
with
$$
R^{ij}_n(t):=\int_0^t(W^i(s)-W_n^i(s))\,dW^j(s).
$$
Moreover, given Part $\mathrm{(i)}$ in Assumption \ref{assumption 15.04.06},
Part $\mathrm{(ii)}$ is equivalent to condition $\mathrm{(ii')}$:
\begin{equation}                                                                                            \label{7.22.1.12}
\sup_{t\leq T}|S_n^{ij}(t)|=O(n^{-\kappa})
\quad\text{(a.s.) for each $\kappa<\gamma$,   for $i,j=1,...,d_1$}.
\end{equation}
\end{rem}

\smallskip
Assumption
\ref{assumption 15.04.06} holds for
a large class of approximations $W_n$ of $W$.
The main examples are the following.
\begin{ex} (Polygonal approximations) Set
$W_n(t)=0$ for $t\in[0, T/n)$ and
$$
W_n(t) =
W({t_{k-1}})+n(t-t_k)(W({t_k})-W({t_{k-1}}))/T
$$
for $t\in[t_k,t_{k+1})$, where $t_k:=kT/n$
for integers $k\geq0$.
\end{ex}
\begin{ex}(Smoothing) Define
$$
W_n(t)=\int_0^1 W({t- u/n})\,du, \quad t\geq0,
$$
where $W(s):=0$ for $s<0$.
\end{ex}
One can prove, see \cite{GS}, that these examples satisfy the conditions
of Assumptions \ref{assumption 15.04.06} with $\gamma=1/2$.

Now  we formulate the conditions on the operators
$L_n$, $M_n^k$ and their convergence to operators $L$ and $M^k$.
We fix an integer $m\geq0$ and a real number $K\geq0$.

\begin{assumption}[ellipticity]                                        \label{assumption 06.02.06}
There exists a constant $\lambda\geq0$
such that for each integer $n\geq1$ for
$dP\times dt\times dx$ almost all
$(\omega,t,x)\in\Omega\times[0,T]\times \bR^d$
$$
a_n^{ij}(t,x)z^iz^j\geq \lambda |z|^2,
$$
for all $z=(z^1,z^2,...,z^d)\in\bR^d$.
\end{assumption}

If $\lambda>0$ then we need the following conditions on the
regularity of the coefficients
$\mathbf{a}_n=(a_n^{ij},a^i_n,a_n: i,j=1,...,d)$,
$\mathbf{b}_n=(b^{ik}_{n},b^k_n:i=1,...,d;k=1,...,d_1)$,
$\mathbf{a}:=(a^{ij},a^{i},a:i,j=1,...,d)$,
$\mathbf{b}:=(b^{ik},b^k:i=1,...,d;k=1,...,d_1)$
for all $n\geq1$, and on the data
$u_{n0}$, $f_n$ and $g_n=(g_n^k)$, $u_0$,
$f$, $g=(g^k)$.

\begin{assumption}                                                                          \label{assumption 06.02.07}
The coefficients
$\mathbf{a}_n$,
$\mathbf{b}_n$ and their derivatives in $x$
up to order $m+4$ are
$\cP\times\cB(\bR^d)$-measurable functions, and they are
in magnitude bounded by $K$. For each $n\geq1$,
 $f_n$ is an $H^{m+3}$-valued predictable process, $g_n=(g_n^k)$
 is an $H^{m+4}(\bR^{d_1})$-valued predictable process and
 $u_{n0}$ is an $H^{m+4}$-valued $\cF_0$-measurable random variable,
such that
for every $\varepsilon>0$ almost surely
$$
|u_{n0}|_{m+3}=O(n^{\varepsilon}),
\quad \int_0^T|f_n|_{m+3}^2\,dt=O(n^{\varepsilon}),
\quad
\sup_{t\leq T}|g_n(t)|_{m+4}=O(n^{\varepsilon}).
$$
\end{assumption}

One knows, see Theorem \ref{theorem 7.22.1} below,
that if Assumption \ref{assumption 06.02.06}
with $\lambda>0$ and Assumption
\ref{assumption 06.02.07} hold, then for each $n\geq1$
there is a unique generalised solution $u_n$ to
\eqref{eq for un}-\eqref{ini}.

\begin{assumption}                                                                              \label{assumption 1.22.1.12}
The coefficients $\mathbf{a}$
and $\mathbf{b}$
and their derivatives in $x$ up to order $m+1$
are $\cP\times\cB(\bR^d)$-measurable functions
on $\Omega\times H_T$, and they are in magnitude bounded by $K$.
The initial value $u_0$ is an $H^{m+1}$-valued $\cF_0$-measurable random
variable,
$f$ is an $H^{m}$-valued predictable processes and $g=(g^k)$
is an $H^{m+1}(\bR^{d_1})$-valued predictable process such that
almost surely
$$
\int_0^T|f(t)|_{m}^2\,dt+\sup_{t\leq T}|g(t)|^2_{m+1}<\infty.
$$
\end{assumption}

\begin{assumption}                                                                                 \label{assumption 06.02.08}
We have
$$
\sup_{H_T}|D^\alpha\mathbf{a}_n-D^\alpha\mathbf{a}|
=O(n^{-\gamma}),
\quad
\sup_{H_T}|D^\beta\mathbf{b}_n-D^\beta\mathbf{b}|
=O(n^{-\gamma}),
$$
for all $|\alpha|\leq(m-1)\vee0$ and $|\beta|\leq m+1$, and
$$
\int_0^T| f_n(t)-f(t)|_{m-1}^2\,dt+\sup_{t\leq T}|g_n(t)-g(t)|_{m+1}^2
=O(n^{-2\gamma}).
$$
\end{assumption}
Now we formulate our  main result when $\lambda>0$
in Assumption \ref{assumption 06.02.06}, and
$b^{ik}_n$, $b^k_n$ and $g^k_n$ do not depend
on $t\in[0,T]$.
\begin{thm}                                                                                       \label{theorem main1}
Assume that ${\mathbf b}_n$ and $g_n$
are independent
of $t$. Let Assumptions \ref{assumption 15.04.06},
\ref{assumption 06.02.06} with $\lambda>0$,
\ref{assumption 06.02.07},
\ref{assumption 1.22.1.12}
and \ref{assumption 06.02.08} hold. Then almost surely
$$
\sup_{t\leq T}|u_n(t)-u(t)|^2_m+\int_0^T|u_n(t)-u(t)|^2_{m+1}\,dt
=O(n^{-2\kappa}),
\quad\text{for any $\kappa<\gamma$}.
$$
\end{thm}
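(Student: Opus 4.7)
I would derive an energy estimate for $v_n:=u_n-u$ in the $H^m$-norm, isolating the Wong--Zakai correction so that all remainder terms can be expressed via the processes $W-W_n$, $A^{ij}-A_n^{ij}$ and $B_n^{ij}$ controlled by Assumption~\ref{assumption 15.04.06}, and then close the estimate via Gr\"onwall. \textbf{Setup:} since $\mathbf{b}$ and $g$ do not depend on $t$, the Stratonovich SPDE \eqref{eq} is equivalent in It\^o form to
\[
du=\bigl(Lu+f+\tfrac12M^k(M^ku+g^k)\bigr)\,dt+(M^ku+g^k)\,dW^k.
\]
Writing $\Phi^k:=M^ku+g^k$, $\Phi_n^k:=M_n^ku+g_n^k$, and $F_n:=(L_n-L)u+(f_n-f)-\tfrac12M^k(M^ku+g^k)$, subtraction from \eqref{eq for un} gives
\[
dv_n=(L_nv_n+F_n)\,dt+M_n^kv_n\,dW_n^k+(\Phi_n^k-\Phi^k)\,dW_n^k+\Phi^k(dW_n^k-dW^k).
\]

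\textbf{Integration by parts.} To extract the Wong--Zakai correction from the last term, I apply It\^o's product rule to both $\Phi^kW^k$ and $\Phi^kW_n^k$ (the latter with no covariation, since $W_n^k$ has finite variation). Subtracting yields
\[
\int_0^t\Phi^k(dW_n^k-dW^k)=\Phi^k(t)(W_n^k-W^k)(t)-\int_0^t(W_n^k-W^k)\,d\Phi^k+\<\Phi^k,W^k\>(t).
\]
Since the martingale part of $d\Phi^k$ is $M^k(M^ju+g^j)\,dW^j$, the angle-bracket term equals $\int_0^tM^k(M^ku+g^k)\,ds$, which combines with the Stratonovich compensation inside $F_n$. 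The remaining stochastic integral $\int(W^k-W_n^k)M^k(M^ju+g^j)\,dW^j$ is split into symmetric ($j=k$) and antisymmetric ($j\neq k$) parts; a further integration by parts rewrites the antisymmetric part in terms of the area-process remainder $A^{jk}-A_n^{jk}$ (of order $n^{-\kappa}$ by Assumption~\ref{assumption 15.04.06}(ii)) and the symmetric part in terms of $S_n^{jk}$ from Remark~\ref{Rem:Sn and An} (also $n^{-\kappa}$). An analogous treatment of $(\Phi_n^k-\Phi^k)\,dW_n^k$ and of the self-coupling $2(v_n,M_n^kv_n)_m\,dW_n^k$---by writing $dW_n^k=dW^k+d(W_n^k-W^k)$ and iterating the integration by parts---produces, via It\^o's formula applied to $(v_n,M_n^kv_n)_m$, the coercivity bonus $\sum_k|M_n^kv_n|_m^2\,dt$, with residues controlled by $B_n^{ij}$ through Assumption~\ref{assumption 15.04.06}(iii).

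\textbf{Energy estimate and Gr\"onwall.} Applying It\^o's formula to $|v_n(t)|_m^2$ and substituting the identities above, the parabolic coercivity bound
\[
2(v_n,L_nv_n)_m+\sum_k|M_n^kv_n|_m^2\leq C|v_n|_m^2-2\lambda|v_n|_{m+1}^2+\mathrm{l.o.t.},
\]
which holds under Assumption~\ref{assumption 06.02.06} with $\lambda>0$ together with the smoothness in Assumption~\ref{assumption 06.02.07}, absorbs the top-order term. One arrives at
\[
|v_n(t)|_m^2+\lambda\int_0^t|v_n(s)|_{m+1}^2\,ds\leq|v_n(0)|_m^2+\xi_n(T)+C\int_0^t|v_n(s)|_m^2\,ds,
\]
with $\xi_n(T)=O(n^{-2\kappa})$ for every $\kappa<\gamma$ by Assumptions~\ref{assumption 15.04.06} and~\ref{assumption 06.02.08}, and $|v_n(0)|_m=O(n^{-\gamma})$ also from~\ref{assumption 06.02.08}. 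Pathwise Gr\"onwall then gives the claimed rate.

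\textbf{Main obstacle.} The technical heart is the integration-by-parts step, specifically (i)~reorganising $2\int_0^t(v_n,M_n^kv_n)_m\,dW_n^k$ so as to recover the It\^o-type correction $|M_n^kv_n|_m^2$ needed for coercivity---integrals against $dW_n^k$ alone carry no quadratic-variation bonus---and (ii)~performing the symmetric/antisymmetric decomposition of the resulting $\int(W-W_n)\,d(\text{martingale})$ integrals into area- and $S_n$-remainders. A further subtlety is that Assumption~\ref{assumption 15.04.06}(iii) only gives $\|B_n^{ij}\|(T)=o(\ln n)$, so every $B_n$-contribution must be paired with a quantity carrying an extra $n^{-\kappa}$ factor if the overall rate $n^{-2\kappa}$ is to be preserved.
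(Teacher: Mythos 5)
Your plan is in the right spirit and correctly identifies the key mechanism (reduce the finite–variation driver to the processes $W-W_n$, $A-A_n$ (equivalently $S_n$) and $B_n$ by integration by parts, then close with Gr\"onwall), which is also what the paper does via the substitutions $z_n=(M^k_nu_n+g^k_n)(W^k-W^k_n)$, $r_n$, and Theorem~\ref{theorem:decay}. However, there are three genuine gaps.

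\emph{First, the a priori growth estimate on $u_n$ is missing.} After your integration by parts the remainder $\xi_n$ contains terms of the shape $M^k_n(M^l_nu_n+g^l_n)$, $M^k_nM^l_n(L_nu_n+f_n)$, etc.\ multiplied by factors of size $O(n^{-\kappa})$. These involve three to four extra derivatives of $u_n$, and without a bound on $\sup_t|u_n|^2_{m+3}+\int_0^T|u_n|^2_{m+4}\,dt$ they are not $O(n^{-2\kappa})$. The paper obtains exactly such a bound, $O(n^{\varepsilon})$ for all $\varepsilon>0$, from Theorem~\ref{theorem growth nondeg} (applied with $l=m+3$); this is an essential ingredient that your argument never establishes.

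\emph{Second, the coercivity inequality as written is false.} Assumption~\ref{assumption 06.02.06} with $\lambda>0$ only gives $2(v_n,L_nv_n)_m\leq C|v_n|^2_m-\lambda|v_n|^2_{m+1}$ (Lemma~\ref{Lemma:vLv}); adding $\sum_k|M^k_nv_n|^2_m$, which is of size $CK^2|v_n|^2_{m+1}$, does not yield $-2\lambda|v_n|^2_{m+1}$, it destroys the sign. What actually closes the estimate is the cancellation between the It\^o correction and the Stratonovich drift correction: one needs $2(v_n,\tfrac12M^k_nM^k_nv_n)_m+\sum_k|M^k_nv_n|^2_m\leq C|v_n|^2_m$ (Lemma~\ref{Lemma:Gyongy}\textit{(ii)}), so that the stochastic-parabolicity condition reduces to ellipticity of $a^{ij}_n$ alone. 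Your $F_n$ buries $-\tfrac12 M^kM^k u$ but never pairs it against the quadratic variation term, and the stated inequality cannot be used as written.

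\emph{Third, the final Gr\"onwall step is not pathwise.} The It\^o formula for $|v_n(t)|^2_m$ produces a genuine local martingale $\int_0^t\mathcal J^k_n\,dW^k$; you fold this into $\xi_n(T)=O(n^{-2\kappa})$, but a local martingale cannot be bounded pathwise by a deterministic rate. The paper uses the stochastic Gr\"onwall Lemma~\ref{Lemma:gronwall} (together with Corollary~\ref{Corollary:Gronwall stopped}), whose hypotheses $Q_n(\tau_n)=o(\ln n)$ and $d\langle m_n\rangle\leq(y_n^2+k_ny_n)\,dQ_n$ are precisely tuned to the $o(\ln n)$ condition in Assumption~\ref{assumption 15.04.06}\textit{(iii)}; this localization-and-martingale argument is a nontrivial step your proposal omits. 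Finally, a minor point: the split you describe as ``symmetric ($j=k$) and antisymmetric ($j\neq k$)'' should be symmetric/antisymmetric in the index pair $(k,l)$, not diagonal/off-diagonal; the symmetric part is controlled via $q_n^{kl}$ and $R_n^{kl}$ from Remark~\ref{Rem:Sn and An}, while the antisymmetric part links to the area increments.
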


In the degenerate case, $\lambda=0$,
instead of Assumptions \ref{assumption 06.02.07},
\ref{assumption 1.22.1.12}
and \ref{assumption 06.02.08} we need to impose
stronger conditions.
\begin{assumption}                                                 \label{assumption deg smooth}
(i) For each $n\geq1$ there exist functions
$\sigma_{n}^{ir}$ on $\Omega\times H_T$, for $i=1,...,d$ and $r=1,...,p$, for some
$p\geq1$,  such that
$a_n^{ij}=\sigma_{n}^{ir}\sigma_{n}^{jr}$
for all $i,j=1,\ldots,d$.  (ii)
The functions $\sigma_{n}^{ir}$,
$b_n^i$ and their derivatives in $x$ up to order $m+6$,
the functions
 $a_n^i$, $a_n$, $b_n$ and their derivatives in $x$
up to order $m+5$
are $\cP\otimes\cB(\bR^d)$-measurable  functions on $\Omega\times H_T$,
and in magnitude are bounded by $K$ for $i=1,...,d$ and $r=1,...,p$.
For each $n\geq1$,
 $f_n$ is an $H^{m+4}$-valued predictable process, $g_n=(g_n^k)$
 is an $H^{m+5}(\bR^{d_1})$-valued predictable process and
 $u_{n0}$ is an $H^{m+4}$-valued $\cF_0$-measurable random variable,
such that
for every $\varepsilon>0$
$$
|u_{n0}|_{m+4}=O(n^{\varepsilon}),
\quad \int_0^T|f_n|_{m+4}^2\,dt=O(n^{\varepsilon}),
\quad
\sup_{t\leq T}|g_n(t)|_{m+5}=O(n^{\varepsilon}).
$$
\end{assumption}

\begin{assumption}                                                                               \label{assumption 1.3.3.12}
The coefficients $\mathbf{a}$
and $\mathbf{b}$
and their derivatives in $x$ up to order $m+2$
are $\cP\times\cB(\bR^d)$-measurable functions
on $\Omega\times H_T$, and they are in magnitude bounded by $K$.
The initial value $u_0$ is an $H^{m+2}$-valued $\cF_0$-measurable random
variable,
$f$ is an $H^{m+2}$-valued predictable process and $g=(g^k)$
is an $H^{m+3}(\bR^{d_1})$-valued predictable process such that
$$
\int_0^T|f(t)|_{m+2}^2\,dt+\sup_{t\leq T}|g(t)|^2_{m+2}<\infty.
$$
\end{assumption}

\begin{assumption}                                                      \label{assumption 06.02.09}
We have
$$
\sup_{H_T}|D^\alpha\mathbf{a}_n-D^\alpha\mathbf{a}|
=O(n^{-\gamma}),
\quad \sup_{H_T}|D^\beta\mathbf{b}_n-D^\beta\mathbf{b}|
=O(n^{-\gamma})
$$
for all $|\alpha|\leq m$ and $|\beta|\leq m+1$, and
$$
\int_0^T| f_n-f|_m^2\,dt
=O(n^{-\gamma}),
\quad\int_0^T|g_n- g|_{m+1}^2\,dt
=O(n^{-2\gamma}).
$$
\end{assumption}

\begin{rem}
Notice that Assumption \ref{assumption deg smooth} (i) implies
Assumption \ref{assumption 06.02.06} with $\lambda=0$.
\end{rem}

\begin{thm}                                                                       \label{theorem main2}
Assume that ${\mathbf b}_n$ and $g_n$  do not depend on $t$.
Let Assumptions  \ref{assumption 15.04.06},
  \ref{assumption deg smooth}, \ref{assumption 1.3.3.12}
and \ref{assumption 06.02.09} hold. Then
$$
\sup_{t\leq T}\abs{u_n-u}_m
=O(n^{-\kappa}) \quad\text{a.s. for
each $\kappa<\gamma$}.
$$
\end{thm}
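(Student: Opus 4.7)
The plan is to adapt the energy-method strategy used for Theorem \ref{theorem main1} to the degenerate setting, carrying out all estimates at the $H^m$-level because, with $\lambda=0$, there is no parabolic good term $\lambda\int_0^t|v_n|_{m+1}^2\,ds$ available to absorb derivative losses. The role of ellipticity is played by the factorisation $a_n^{ij}=\sigma_n^{ir}\sigma_n^{jr}$ of Assumption \ref{assumption deg smooth}(i): combined with the first-order structure of $M_n^k$ and with integration-by-parts and commutator estimates, it yields $H^m$-coercivity bounds of the form $2(v_n,L_nv_n)_m\leq C|v_n|_m^2$ and $2(v_n,M_n^kv_n)_m\leq C|v_n|_m^2$. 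The strengthened Sobolev regularity in Assumptions \ref{assumption deg smooth} and \ref{assumption 1.3.3.12} supplies the room needed for those commutators and for the iterated It\^o corrections below.

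Converting \eqref{eq} into It\^o form --- which, since $\mathbf{b}$ and $g$ do not depend on $t$, produces the drift correction $\tfrac{1}{2}(M^kM^ku+M^kg^k)\,dt$ --- and subtracting from \eqref{eq for un} yields a pathwise SPDE for $v_n:=u_n-u$:
\begin{equation*}
dv_n=\bigl[L_nv_n+(M_n^kv_n+\rho_n^k)\,\dot W_n^k+\rho_n\bigr]\,dt+\Psi^k\,d(W_n^k-W^k),
\end{equation*}
where $\Psi^k:=M^ku+g^k$, $\rho_n^k:=(M_n^k-M^k)u+g_n^k-g^k$, and $\rho_n$ collects $(L_n-L)u,\,f_n-f$ and the Stratonovich correction; by Assumption \ref{assumption 06.02.09} the pieces $\rho_n,\rho_n^k$ are $O(n^{-\gamma})$ in the appropriate Sobolev norms. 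Next I apply It\^o's product rule to $\Psi^k(W_n^k-W^k)$, using that $u$ itself solves \eqref{eq} so that $d\Psi^k$ has bounded-in-$H^m$ drift plus a diffusion $M^j\Psi^k\,dW^j$, to rewrite
\begin{equation*}
\int_0^t\Psi^k\,d(W_n^k-W^k)=\Psi^k(t)(W_n^k-W^k)(t)-\int_0^t(W_n^k-W^k)\,d\Psi^k+[\Psi^k,W^k](t).
\end{equation*}
The boundary term is $O(n^{-\kappa})$ by Assumption \ref{assumption 15.04.06}(i), and so is the drift piece of the middle integral; the stochastic piece $\int_0^t(W_n^k-W^k)M^j\Psi^k\,dW^j$ reduces via Remark \ref{Rem:Sn and An} to combinations of $A^{jk}-A_n^{jk}$ and $B_n^{jk}$ --- hence to $O(n^{-\kappa})+o(\ln n)\cdot O(n^{-\gamma})$ by Assumption \ref{assumption 15.04.06}(ii)--(iii) --- while the covariation $[\Psi^k,W^k](t)=\int_0^tM^k\Psi^k\,ds$ combines with the Stratonovich correction inside $\rho_n$ (and with an implicit $B_n$-contribution coming from the coupling term $\int_0^tM_n^kv_n\,dW_n^k$) to leave only an $O(n^{-\kappa})$ residue.

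Applying It\^o's formula to $|v_n(t)|_m^2$, using the $H^m$-coercivity bounds to dominate the $L_n$ and $M_n^k\,\dot W_n^k$ terms, and paying off the $\dot W_n^k$-factor against a further integration by parts that turns it into a $B_n$-controlled quantity (so that the surviving weight has integral of order $o(\ln n)$ a.s.\ over $[0,T]$), I obtain a pathwise inequality of the form
\begin{equation*}
|v_n(t)|_m^2\leq \eta_n^2\,n^{-2\kappa}+C\int_0^t|v_n(s)|_m^2\bigl(1+\mu_n(s)\bigr)\,ds,
\end{equation*}
with $\eta_n$ almost surely bounded and $\int_0^T\mu_n(s)\,ds=o(\ln n)$ almost surely, after which Gronwall closes the argument. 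The principal obstacle is precisely this last transformation: every factor of $W_n^k$ or $\dot W_n^k$ appearing in the energy estimate (which blow up as $n\to\infty$, e.g.\ total-variation $O(n^{1/2})$ for polygonal approximations) must be paired either with an $O(n^{-\kappa})$ factor coming from Assumption \ref{assumption 15.04.06}(i)--(ii) or with the sublogarithmic growth of $\|B_n^{ij}\|(T)$ from (iii), and designing the bookkeeping so that every such pairing is accounted for --- including the cancellation between the Stratonovich correction, the It\^o covariation $[\Psi^k,W^k]$, and the coupling term $\int M_n^kv_n\,dW_n^k$ --- is the crux of the argument.
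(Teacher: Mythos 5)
Your overall philosophy is sound, and the broad plan (reduce the mismatch $dW_n^k-dW^k$ by integration by parts, pair every factor that blows up with an $O(n^{-\kappa})$ or $o(\ln n)$ factor, then close with a Gronwall-type argument) is indeed the spirit of the paper's argument. But the proposal stops at the level where the real work happens, and at the one place where you do make a concrete claim, it is wrong. You assert that the stochastic piece $\int_0^t(W^k_n-W^k)\,M^j\Psi^k\,dW^j$ ``reduces via Remark~\ref{Rem:Sn and An} to combinations of $A^{jk}-A^{jk}_n$ and $B_n^{jk}$''. It does not: $M^j\Psi^k=M^j(M^ku+g^k)$ depends on $(s,x,\omega)$ and cannot be factored out of the $dW^j$-integral, so there is no way to recognize this as $\int(W^k_n-W^k)\,dW^j$ up to constants. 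To get anything in the form of $S_n^{kl}$ or $B_n^{kl}$ one must do yet another integration by parts; carried out carefully this produces further commutator corrections and precisely the two-step substitution $u-u_n\rightsquigarrow w_n=u-u_n-z_n\rightsquigarrow v_n=w_n-r_n$ that the paper builds (Lemmas~\ref{lemma eq vn 1}--\ref{lemma 4.3.3.12}), with $z_n=(M_n^ku_n+g^k_n)(W^k-W^k_n)$ and $r_n$ chosen so that the surviving $S_n$-terms carry either a symmetric part (controlled by $q_n^{kl}$) or a $v_n$-coefficient that the abstract SPDE estimate can absorb. Your choice $\Psi^k=M^ku+g^k$ instead of $M^k_nu_n+g^k_n$ is also not innocent: the paper's choice is deliberate because $u_n$ has explicit finite-variation dynamics under $W_n$ and a controlled $H^{m+4}$-growth from Theorem~\ref{theorem growth deg}, which is exactly what makes the resulting free terms estimable.

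Beyond that specific misstep, the proposal declares but does not demonstrate: the asserted cancellation between $[\Psi^k,W^k]$, the Stratonovich correction, and ``an implicit $B_n$-contribution'' is not verified (and in fact the coefficients do not cancel at the level at which you present them: $[\Psi^k,W^k]$ has density $M^k\Psi^k$ while the Stratonovich correction is $\tfrac12M^k\Psi^k$; the residual is what the second substitution by $r_n$ is designed to absorb). Finally, closing the Gronwall is not a routine step: the inequality one obtains carries a local martingale $m_n$ and a $Q_n$-weight with $Q_n(T)=o(\ln n)$, and one needs the stopped, quantified Gronwall-type Lemma~\ref{Lemma:gronwall} together with Corollary~\ref{Corollary:Gronwall stopped} --- not a pathwise exponential estimate --- and, in the degenerate case, the $H^m$-level \textit{a priori} bound from Theorem~\ref{theorem 1.20.1.12} with the \cite{KR}-type estimate for $\cI_n^{\alpha}$, since there is no parabolic dissipation. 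You explicitly call the bookkeeping ``the crux of the argument''; that is correct, and it is precisely the part the proposal leaves undone.
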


Let us now consider the case when all the coefficients and free terms
may depend on $t\in[0,T]$. We use the notation
$$
h_n:=\left(b_n^{ik}, b_n^k, g_n^k:i=1,...,d, \,k=1,...,d_1\right), \quad n\geq1,
$$
$$
h:=\left(b^{ik}, b^k, g^k:i=1,...,d, \,k=1,...,d_1\right).
$$

We make the following assumption.

\begin{assumption}                                                         \label{assumption 0.5.3.12}
There exist
$\cP\mathcal\otimes\cB(\bR^d)$-measurable bounded functions
$$
\mathbf b_n^{(r)}=(b^{ik(r)}_n,\, b^{k(r)}_n:i=1,...,d, \,k=1,...,d_1), \quad r=0,...,d_1, \,n\geq1
$$
and $H^0(\bR^{d_1})$-valued bounded predictable processes
$g_n^{(r)}=(g_n^{k(r)}:k=1,...,d_1)$,  such that
$$
d(h_n(t),\varphi)=(h^{(0)}_n(t),\varphi)\,dt+(h_n^{(k)}(t),\varphi)\,dW_n^k(t),
\quad n\geq 1,
$$
where $h_n^{(r)}=(\mathbf b_n^{(r)}, g_n^{(r)})$. For $r=0,...d_1$ and
$j=1,...,d_1$ there exist
$\cP\mathcal\otimes\cB(\bR^d)$-measurable bounded functions
$$
\mathbf b^{(r)}=(b^{ik(r)},\, b^{k(r)}:i=1,...,d, \,k=1,...,d_1),
$$
$$
\mathbf b^{(jr)}=(b^{ik(jr)},\, b^{k(jr)}:i=1,...,d, \,k=1,...,d_1),
$$
and $H^0(\bR^{d_1})$-valued bounded predictable processes
$g_n^{k(r)}$ and $g_n^{k(jr)}$,  $k=1,...,d_1$,
such that
$$
d(h(t),\varphi)=(h^{(0)}(t),\varphi)\,dt+(h^{(k)}(t),\varphi)\,dW_n^k(t),
$$
$$
d(h^{(j)}(t),\varphi)=(h^{(j0)}(t),\varphi)\,dt+(h^{(jk)}(t),\varphi)\,dW^k(t)
$$
for $\varphi\in C_0^{\infty}(\bR^d)$,
for $j=1,...,d_1$, where
$h^{(r)}=(\mathbf b^{(r)}, g^{k(r)}:k=1,\ldots,d_1)$ and
$h^{(jr)}=(\mathbf b^{(jr)}, g^{k(jr)}:k=1,\ldots,d_1)$, $r=0,...,d_1$.
\end{assumption}

If Assumption \ref{assumption 06.02.06} holds with $\lambda>0$, then we
impose the following conditions.

\begin{assumption}                                                                                    \label{assumption 1.4.3.12}
For $n\geq 1$ the coefficients ${\mathbf b}_n^{(r)}$ and their derivatives in
$x$ up to order $m+3$
are $\cP\otimes \cB(\bR^d)$-measurable
functions on $\Omega\times H_T$, and they are bounded in magnitude by
$K$ for $r=0,1,...,d_1$. The functions $g_n^{k(0)}$ are $H^{m+2}$-valued,
$g^{k(j)}_n$ are $H^{m+3}$-valued predictable processes, such that
$$
\int_0^T|g^{k(0)}_n(t)|^2_{m+2}\,dt=O(n^{\varepsilon}),
\quad \sup_{H_T}|g^{k(j)}_n|_{m+3}=O(n^{\varepsilon})
$$
for each $\varepsilon>0$ and all $k,j=1, ...,d_1$.
\end{assumption}

\begin{assumption}                                                                                        \label{assumption 1.6.3.12}
The coefficients ${\mathbf b}^{(r)}$, ${\mathbf b}^{(jr)}$ and their derivatives in
$x$ up to order $m+1$
are $\cP\otimes \cB(\bR^d)$-measurable
functions on $\Omega\times H_T$, and they are bounded in magnitude by
$K$ for $r=0,1,...,d_1$ and $j=1,2,..,d_1$.
The functions $g^{k(r)}$
and $g^{k(jr)}$ are $H^{m+1}$-valued predictable processes,
and are bounded in $H^{m+1}$, for $r=0,1,...,d_1$ and $j=1,2,..,d_1$.
\end{assumption}

\begin{assumption}                                                                         \label{assumption 2.6.3.12}
For $j=1,2,...,d_1$ we have
$$
\sup_{H_T}|D^{\alpha}{\mathbf b}^{(j)}_n-D^{\alpha}{\mathbf b}^{(j)}|
=O(n^{-\gamma})
\quad\text{for $|\alpha|\leq m$},
$$
$$
\sup_{t\leq T}|g_n^{k(j)}-g^{k(j)}|_m =O(n^{-\gamma})
\quad\text{for $k=1,...,d_1$}.
$$
\end{assumption}

One knows, see \cite{G2}, that under the assumptions above
the limit $u$ of $u_n$ for $n\to\infty$ exists and satisfies
\begin{equation}                                                                             \label{eqt}
du(t,x)=(Lu(t,x)+f)\,dt+(M^ku(t,x)+g^k)\,\circ dW^k(t),\quad(t,x)\in H_T
\end{equation}
with initial condition
\begin{equation}                                                                              \label{ini0t}
u(0,x)=u_0(x),     \quad x\in\bR^d,
\end{equation}
where
\begin{align*}
 (M^ku(t,x)+g^k)\circ \,dW^k=&\tfrac{1}{2}(M^kM^ku(t,x)+M^kg^k(t,x))\,dt  \\
 &+(M^ku(t,x)+g^k(t,x))\,dW^k(t)\\
 &+\frac{1}{2}\sum_{k=1}^{d_1}(M^{k(k)}u(t,x)+g^{k(k)}(t,x))\,dt,
\end{align*}
with $M^{k(k)}:=b^{ik(k)}(t,x)D_i+b^{k(k)}(t,x)$.

We have the following results on the rate of convergence.
\begin{thm}                                                                                       \label{theorem main1t}
 Let Assumptions \ref{assumption 15.04.06},
\ref{assumption 06.02.06} with $\lambda>0$,
\ref{assumption 06.02.07},
\ref{assumption 1.22.1.12},
\ref{assumption 06.02.08} and
\ref{assumption 0.5.3.12} through \ref{assumption 2.6.3.12} hold.
Then for each $\kappa<\gamma$
$$
\sup_{t\leq T}|u_n(t)-u(t)|^2_m+\int_0^T|u_n(t)-u(t)|^2_{m+1}\,dt
=O(n^{-2\kappa}),
$$
where $u$ is the generalised solution of \eqref{eqt}-\eqref{ini0t}.
\end{thm}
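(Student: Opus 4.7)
The plan is to reduce the time-dependent case to the setting of Theorem \ref{theorem main1} by using Assumption \ref{assumption 0.5.3.12}, which encodes the $t$-dependence of $\mathbf{b}_n$ and $g_n$ as stochastic integrals against $dW_n^k$. First I rewrite the stochastic term in \eqref{eq for un} by integration by parts (valid since $W_n$ has finite variation):
\begin{equation*}
\int_0^t (M_n^k u_n(s) + g_n^k(s))\,dW_n^k(s)
= \bigl[(M_n^k u_n + g_n^k)W_n^k\bigr]_0^t - \int_0^t W_n^k(s)\,d\bigl(M_n^k u_n(s) + g_n^k(s)\bigr),
\end{equation*}
and expand $d(M_n^k u_n + g_n^k)$ using \eqref{eq for un} together with the semimartingale decompositions of $\mathbf{b}_n$ and $g_n$ from Assumption \ref{assumption 0.5.3.12}. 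The resulting martingale contributions take the form $\int_0^t W_n^k(s)\,(\text{terms in }h_n^{(j)})\,dW_n^j(s)$, together with a drift $\int_0^t W_n^k(\text{terms in }h_n^{(0)})\,ds$ and iterated integrals $\int_0^t W_n^k(M_n^j M_n^k u_n + M_n^k g_n^j)\,dW_n^j$ arising from $M_n^k(du_n)$.

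Next I decompose these iterated stochastic integrals into symmetric and antisymmetric parts in the indices $(j,k)$. The antisymmetric part, after integration by parts, produces the area process $A_n^{jk}$, which differs from $A^{jk}$ by $O(n^{-\kappa})$ by Assumption \ref{assumption 15.04.06}(ii) and Remark \ref{Rem:Sn and An}. The symmetric part yields Lebesgue-type integrals in $t$ after a second integration by parts. The combination is designed so that, upon passing to the limit, these iterated integrals reproduce exactly the Stratonovich correction $\tfrac12(M^k M^k u + M^k g^k)\,dt$ together with the extra $t$-dependent correction $\tfrac12(M^{k(k)}u + g^{k(k)})\,dt$ appearing in the explicit form of \eqref{eqt}. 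Any leftover discrepancy between integrals against $dW_n^k$ and $dW^k$ is captured by $B_n^{jk}$, and the bound $\|B_n^{jk}\|(T)=o(\ln n)$ from Assumption \ref{assumption 15.04.06}(iii) guarantees that these errors are absorbed without worsening the rate.

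Subtracting the resulting Itô equation for $u_n$ from the Itô form of \eqref{eqt}, the equation for $v_n := u_n - u$ takes the form of a non-degenerate, time-dependent linear SPDE in $v_n$, with forcing terms of total size $O(n^{-\gamma}+n^{-\kappa})$ coming from Assumptions \ref{assumption 06.02.08}, \ref{assumption 2.6.3.12} and from Assumption \ref{assumption 15.04.06}(i)--(ii), plus logarithmic $B_n$-contributions of the form $o(n^{-\kappa}\ln n)$ that are absorbed into $O(n^{-\kappa'})$ for any $\kappa'<\kappa$. An $H^m$-energy estimate followed by Gronwall's lemma, carried out exactly as in the proof of Theorem \ref{theorem main1} (and exploiting Assumption \ref{assumption 06.02.06} with $\lambda>0$ to control the $H^{m+1}$-integral of $v_n$), then yields the claimed rate. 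The hard part is the bookkeeping in the second step: one must check that every iterated integral produced by the two integrations by parts matches, up to errors controlled by $A_n^{jk}-A^{jk}$, $B_n^{jk}$ and $W_n-W$, the correct Stratonovich drift in the limit, and that no uncontrolled term survives. This also explains why Assumption \ref{assumption 1.4.3.12} requires $m+3$ derivatives on $\mathbf{b}_n^{(r)}$ and on $g_n^{k(j)}$: two derivatives are consumed when passing the integration-by-parts procedure through $L_n$ and $M_n^k$.
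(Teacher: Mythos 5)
Your plan is conceptually aligned with the Wong--Zakai philosophy, but it is not the route the paper takes, and the concrete step you lead with has a structural problem that the paper's formulation is expressly designed to avoid.

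Integrating by parts against $W_n^k$ produces the boundary term $(M_n^k u_n(t)+g_n^k(t))W_n^k(t)$, which contains $u_n(t)$ itself and is $O(1)$, not small. The resulting identity is an \emph{implicit} relation for $u_n(t)$, not an SPDE you can feed into an $H^m$-energy estimate. The paper sidesteps this entirely by never integrating by parts against $W_n^k$. Instead it subtracts an explicit correction
$z_n=(M_n^ku_n+g_n^k)(W^k-W_n^k)$, which (unlike your boundary term) is already $O(n^{-\kappa})$, and writes the differential of $z_n$ via It\^o's formula. This replaces the stochastic integral against $d(W^k-W_n^k)$ by drift and $dW$ terms multiplied by $W^k-W_n^k$, and by iterated integrals against $dS_n^{kl}$. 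In the $t$-dependent case, Assumption \ref{assumption 0.5.3.12} adds the extra terms $(M_n^{k(l)}u_n+g_n^{k(l)})\,dS_n^{kl}$ (Lemma \ref{lemma eq vn 1t}). A second correction $\bar r_n=r_n-\tfrac12(M^{k(l)}u+g^{k(l)})S_n^{kl}$ is then subtracted (Lemmas \ref{lemma 2.4.3.12}, \ref{lemma 12.8.3.12}) so that $\bar v_n=u-u_n-z_n-\bar r_n$ solves an equation of the exact form \eqref{7.22.1 with n} with drivers $dt$, $dW^k$, $dS_n^{kl}$, $dB_n^{lr}$, and the whole rate estimate then comes for free from the ready-made Theorem \ref{theorem:decay}. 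Your proposal does not invoke Theorem \ref{theorem:decay} at all, and therefore has to redo that energy/Gronwall argument from scratch in a setting made awkward by the implicit boundary term.

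The second gap is the one you flag yourself: you write that the hard part is to check that every iterated integral produced by the two integrations by parts reduces to $A_n-A$, $B_n$, and $W_n-W$ contributions of the right size, but you do not carry out this bookkeeping. In the paper this is exactly the content of the explicit formulas for $\bar F_n$, $\bar G_n^k$, $\bar H_n^{kl}$, the quantities $T_n^{kl0}$, $T_n^{klj}$, $P^{kl0}$, $P^{klj}$, $U^{kl0}$, $U^{klj}$, and the verification of \eqref{8.22.1.12} for $\hat F_n$, $\hat G_n^k$, $\hat H_n^{kl}$. Without an analogous accounting, your argument does not establish that the residual forcing in the equation for $v_n$ is $O(n^{-\kappa})$ — and that is precisely the point of the theorem. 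Finally, the heuristic that ``two derivatives are consumed'' is plausible but not a substitute for the bound $\int_0^T(|\hat F_n|^2_{m-1}+|\hat G_n^k|^2_m)\,dt=O(n^{-2\kappa})$ and $\sup_{t\le T}|\hat H_n^{kl}|^2_m=O(n^{-2\kappa})$ that the proof actually needs in order to apply Theorem \ref{theorem:decay} with $l=m$.
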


Let us now consider the case when $\lambda=0$
in Assumption \ref{assumption 06.02.06}.

\begin{assumption}                                                                                    \label{assumption 8.6.3.12}
For $n\geq 1$ the coefficients ${\mathbf b}_n^{(r)}$ and their derivatives in
$x$ up to order $m+4$
are $\cP\otimes \cB(\bR^d)$-measurable
functions on $\Omega\times H_T$, and they are bounded in magnitude by
$K$ for $r=0,1,...,d_1$. The functions $g_n^{k(0)}$ are $H^{m+3}$-valued,
$g^{k(j)}_n$ are $H^{m+4}$-valued predictable processes, such that
$$
\int_0^T|g^{k(0)}(t)|^2_{m+3}\,dt=O(n^{\varepsilon}),
\quad \sup_{H_T}|g^{k(j)}|_{m+4}=O(n^{\varepsilon})
$$
for each $\varepsilon>0$ and all $k,j=1, ...,d_1$.
\end{assumption}

\begin{assumption}                                                                                        \label{assumption 9.6.3.12}
The coefficients ${\mathbf b}^{(r)}$, ${\mathbf b}^{(jr)}$ and their derivatives in
$x$ up to order $m+2$
are $\cP\otimes \cB(\bR^d)$-measurable
functions on $\Omega\times H_T$, and they are bounded in magnitude by
$K$ for $r=0,1,...,d_1$ and $j=1,2,..,d_1$.
The functions $g^{k(r)}$
and $g^{k(jr)}$ are $H^{m+1}$-valued predictable processes,
and are bounded in $H^{m+1}$, for $r=0,1,...,d_1$ and $k,j=1,2,..,d_1$.
\end{assumption}

\begin{assumption}                                                                         \label{assumption 10.6.3.12}
For $j=1,2,...,d_1$ we have
$$
\sup_{H_T}|D^{\alpha}{\mathbf b}^{(j)}_n-D^{\alpha}{\mathbf b}^{(j)}|
=O(n^{-\gamma})
\quad\text{for $|\alpha|\leq m+1$},
$$
$$
\sup_{t\leq T}|g_n^{k(j)}-g^{k(j)}|_{m+1} =O(n^{-\gamma})
\quad\text{for $k=1,...,d_1$}.
$$
\end{assumption}

\begin{thm}                                                                                        \label{theorem 5.6.3.12}
Let Assumption \ref{assumption 15.04.06},
 Assumptions \ref{assumption deg smooth} through
\ref{assumption 0.5.3.12}, and
Assumptions \ref{assumption 8.6.3.12} through
\ref{assumption 10.6.3.12}  hold.  Then
$$
\sup_{t\leq T}\abs{u_n-u}_m
=O(n^{-\kappa}) \quad\text{for
each $\kappa<\gamma$},
$$
where $u$ is the generalised solution of \eqref{eqt}-\eqref{ini0t}.
\end{thm}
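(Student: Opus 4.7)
The strategy combines two techniques already employed in the paper: the Stratonovich-to-It\^o manipulation used in the proof of Theorem \ref{theorem main1t} to handle time-dependent coefficients, and the loss-of-derivatives energy method of Theorem \ref{theorem main2} to handle a degenerate $L_n$. Concretely, I would derive a linear stochastic PDE for the error $e_n := u_n - u$, apply the $H^m$ a priori estimate for possibly degenerate second-order stochastic PDEs (Theorem \ref{theorem 7.22.1}), and then bound each forcing by means of Assumption \ref{assumption 15.04.06} together with the coefficient-convergence hypotheses.

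The first step is to rewrite \eqref{eqt} in It\^o form with respect to the driver $W_n^k$. Using Assumption \ref{assumption 0.5.3.12} I apply It\^o's formula to $M^k u + g^k$ so as to isolate the three drift corrections displayed just before Theorem \ref{theorem main1t}. Then, substituting $dW^k = d(W^k - W_n^k) + dW_n^k$ and integrating by parts in the classical manner of \cite{Gyongy-Shmatkov} converts the $(M^k u + g^k)\,d(W^k - W_n^k)$ contribution into terms involving $\sup_t|W - W_n|$, the area differences $A^{ij} - A^{ij}_n$, and the total variations $\|B_n^{ij}\|(T)$. Subtracting from \eqref{eq for un}, the error $e_n$ satisfies an SPDE
\begin{equation*}
de_n = (L_n e_n + F_n)\,dt + (M_n^k e_n + G_n^k)\,dW_n^k,
\end{equation*}
where $F_n$ and $G_n^k$ group the discrepancies by the type of smallness estimate they satisfy.

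Applying the $H^m$-estimate from Theorem \ref{theorem 7.22.1} in the degenerate regime, two spatial derivatives are lost overall: one from degeneracy, and one from the It\^o differentiation of the time-dependent coefficients in Assumption \ref{assumption 0.5.3.12}. The extra regularity built into Assumptions \ref{assumption deg smooth}, \ref{assumption 8.6.3.12} and \ref{assumption 9.6.3.12} is designed precisely to absorb this loss. The coefficient-difference contributions are then $O(n^{-\gamma})$ by Assumptions \ref{assumption 06.02.09} and \ref{assumption 10.6.3.12}; the contributions involving $\sup_t|W - W_n|$ and $\sup_t|A^{ij} - A^{ij}_n|$ are $O(n^{-\kappa})$ by Assumption \ref{assumption 15.04.06}(i)--(ii); and the $\|B_n^{ij}\|(T)$ contributions multiply factors that are $O(n^{-2\kappa'})$ for some $\kappa'\in(\kappa,\gamma)$, so by Assumption \ref{assumption 15.04.06}(iii) they are still $o(n^{-2\kappa})$. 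A pathwise Gr\"onwall argument, run on the probability-one event on which all the $O$ and $o$ estimates hold, then yields $\sup_{t\leq T}|e_n|_m^2 = O(n^{-2\kappa})$.

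The main obstacle is the bookkeeping: every commutator between a spatial differential operator and the It\^o differential of a time-dependent coefficient must be identified and matched, at the correct regularity level, against a hypothesis that controls it; in particular the two-derivative loss must be tracked through the semimartingale expansion of $h$ provided by Assumption \ref{assumption 0.5.3.12}. Once this accounting is complete, the rest of the argument follows the patterns of the proofs of Theorems \ref{theorem main2} and \ref{theorem main1t} with no essentially new ideas.
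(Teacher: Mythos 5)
Your proposal captures the broad outline---integrate by parts against $d(W^k-W^k_n)$, control the resulting contributions via Assumption \ref{assumption 15.04.06} and the coefficient-convergence hypotheses, then close with a pathwise Gronwall---but it misses the essential structural step, and the equation you write down for the error is not correct.

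The central issue is the claim that $e_n:=u_n-u$ satisfies an SPDE of the form
\begin{equation*}
de_n=(L_n e_n+F_n)\,dt+(M_n^k e_n+G_n^k)\,dW_n^k.
\end{equation*}
After you substitute $dW^k=dW_n^k+d(W^k-W_n^k)$ and integrate by parts, a boundary term $z_n=(M_n^k u_n+g_n^k)(W^k-W_n^k)$ appears that cannot be absorbed into $F_n$ or $G_n^k$; it must be subtracted from $e_n$ to form $w_n=u-u_n-z_n$, and then a second correction $\bar r_n$ (built from the area-difference processes $S_n^{kl}$ and $q_n^{kl}$) must also be peeled off to form $\bar v_n=w_n-\bar r_n$. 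It is $\bar v_n$, not $e_n$, that solves a tractable equation, namely \eqref{7.6.3.12}. Moreover this equation is driven by $dW^k$ (a genuine Wiener process), $dS_n^{kl}$ and $dB_n^{lr}$, not by $dW_n^k$: the Wiener driver is unavoidable because $u$ is driven by $W$, and the resulting It\^o integral is controlled through Lemma \ref{Lemma:gronwall} via the local-martingale terms $m_n$. If you believe the equation is driven purely by the bounded-variation process $W_n^k$, you lose both the stochastic-integral structure that Lemma \ref{Lemma:gronwall} is designed to handle and the place where Assumption \ref{assumption 15.04.06}(ii) (area-process convergence) enters---$S_n^{kl}$ does not appear in your formulation at all.

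A secondary issue is the choice of a priori estimate. Theorem \ref{theorem 7.22.1} requires $\lambda>0$ and gives moment bounds, not pathwise rates. In the degenerate case the paper uses Theorem \ref{theorem 1.20.1.12} with $l=m$, which is the pathwise almost-sure rate result tailored to equations driven by $W$ plus bounded-variation processes, applied after the decomposition $u-u_n=\bar v_n+z_n+\bar r_n$. The "two-derivative loss" you describe is also not quite the right bookkeeping: the extra regularity in Assumptions \ref{assumption deg smooth}, \ref{assumption 8.6.3.12}, \ref{assumption 9.6.3.12} is spent on the higher-order operators $M^k_nM^l_n$, $\bar\cL_n$, and $M^{k(l)}$ acting on $u$, $u_n$, and the correction terms inside $\hat F_n$, $\hat G_n^k$, $\hat H_n^{kl}$, as recorded in Lemma \ref{lemma 12.8.3.12}; this is a product of the Stratonovich-to-It\^o rewriting and the subtraction of $z_n$, $\bar r_n$, not simply one derivative from degeneracy and one from time-dependence.
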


\section{Auxiliaries}                                                                                     \label{section 1.5.1.12}

\subsection{Existence, uniqueness and known estimates for solutions}

Consider the equation
\begin{align}
du(t,x)=&(\cL u(t,x)+f(t,x))\,dt+(\cM^ku(t,x)+g^k(t,x))\,dW^k(t)                                   \nonumber\\
&+(\cN^{\rho}u(t,x)+h^{\rho}(t,x))\,dB^{\rho}(t), \quad t\in(0,T],
\quad x\in\bR^d                                                                                                           \label{7.22.1}
\end{align}
with initial condition
\begin{equation}                                                                                                           \label{2.5.1.12}
u(0,x)=u_0(x), \quad x\in\bR^d,
\end{equation}
where $W=(W^1,...,W^{d_1})$ is a $d_1$-dimensional Wiener martingale
with respect to $(\cF_t)_{t\geq0}$, and $B^1$,...,$B^{d_2}$
are real-valued
adapted continuous processes of finite variation over $[0,T]$.
The operators $\cL$, $\cM^k$  and $\cN^{\rho}$ are of the form
$$
\cL=\fra^{ij}D_iD_j+\fra^iD_i+\fra, \quad \cM^k=\frb^{ik}D_i+\frb^{k},\quad
\cN^{\rho}=\frc^{i\rho}D_i+\frc^{\rho},
$$
where the coefficients $\fra^{ij}$, $\fra^{i}$, $\fra$, $\frb^{ik}$, $\frb^k$,
$\frc^{i\rho}$ and $\frc^{\rho}$ are $\cP\times\cB(\bR^d)$-measurable
real-valued
bounded functions defined on $\Omega\times[0,T]\times\bR^d$
for all $i,j=1,...,d$,
$k=1,...,d_1$ and $\rho=1,...,d_2$.
The free terms $f=f(t,\cdot)$, $g^k(t,\cdot)$
and $h^{\rho}=h^{\rho}(t,\cdot)$ are $H^0$-valued
predictable processes,
and $u_0$ is an $H^1$-valued $\cF_0$-measurable random variable.

To formulate the notion of the solution we assume that
the generalised derivatives in $x$, $D_ja^{ij}$, are also bounded functions
on $\Omega\times H_T$ for all $i,j=1,...,d$.

\begin{defn}                                                            \label{Definition 2.3.3.12}
By a solution of \eqref{7.22.1}-\eqref{2.5.1.12}
we mean an $H^1$-valued weakly continuous
adapted process $u=(u(t))_{t\in[0,T]}$ , such that
\begin{align*}
    (u(t),\varphi)&=(u_0,\varphi)+\int_0^t\{-(\fra^{ij}D_iu,D_j\varphi)
+\big((\fra^i-\fra_{j}^{ij})D_iu+\fra u+f,\varphi\big)\}\,ds                                  \\
     &+\int_0^t(\cM^ku+g^k,\varphi)\,dW^k
     +\int_0^t(\cN^{\rho}u+h^{\rho},\varphi)\,dB^{\rho},
     \end{align*}
holds for $t\in[0,T]$ and $\varphi\in C^{\infty}_0(\bR^d)$,
where $\fra^{ij}_j=D_j\fra^{ij}$.
\end{defn}

To present those existence and uniqueness theorems from the $L_2$-theory
of stochastic PDEs which we use in this paper, we formulate some assumptions.

 \begin{assumption}                                                                                             \label{assumption 2.5.1.12}
 There is a constant $\lambda\geq0$ such that for all $n\geq1$,
 $dP\times dt\times dx$ almost all $(\omega,t,x)\in\Omega\times H_T$
we have
 \begin{equation*}
 (\fra^{ij}-\tfrac{1}{2}\frb^{ik}\frb^{jk})z^iz^j\geq \lambda|z|^2
 \quad\text{for all $z=(z^1,...,z^d)\in\bR^d$}.
 \end{equation*}
 \end{assumption}

 To formulate some further conditions
 on the smoothness of the coefficients and the data of
 \eqref{7.22.1}-\eqref{2.5.1.12} we fix
 an integer $m\geq1$.
 We consider first the case $\lambda>0$
 in Assumption \ref{assumption 2.5.1.12},
and make the following conditions.

\begin{assumption}                                                                                         \label{assumption 7.22.1}
The coefficients $\fra^{ij}$, $\fra^i$, $\fra$, $\frb^{ik}$, $\frb^k$,
$\frc^{i\rho}$, $\frc^{\rho}$ and their derivatives in $x\in\bR^d$
up to order $m$ are
$\cP\times\cB(\bR^d)$-measurable real functions on
$\Omega\times H_T$ and in magnitude are bounded  by $K$.
\end{assumption}

\begin{assumption}                                                                                           \label{assumption 7.22.3}
The initial value $u_0$ is an $H^m$-valued random
variable. The free terms $f=f(t)$, $g^k=g^k(t)$, $h^{\rho}=h^{\rho}(t)$
are predictable $H^m$-valued processes such that almost surely
$$
\int_0^T|f(t)|_{m-1}^2\,dt<\infty, \quad \int_0^T|g(t)|^2_{m}\,dt<\infty,\quad
\int_0^T|h^{\rho}(t)|_{m}\,d\|B^{\rho}\|(t)<\infty
$$
for all $k=1,...,d_1$ and $\rho=1,...,d_2$, where $|g|^2_l=\sum_{k}|g^k|^2_l$
and $|h|^2_l=\sum_{\rho}|h^{\rho}|^2_l$ for ever $l\geq0$.
\end{assumption}

\begin{thm}                                                                                            \label{theorem 7.22.1}
Let Assumptions \ref{assumption 2.5.1.12} with $\lambda>0$,
\ref{assumption 7.22.1} and  \ref{assumption 7.22.3} hold.
Then \eqref{7.22.1}-\eqref{2.5.1.12} has a unique generalised solution $u$.
Moreover, $u$ is an $H^m$-valued weakly continuous process, it is
strongly continuous as an $H^{m-1}$-valued process,
$u(t)\in H^{m+1}$ for $P\times dt$ a.e. $(\omega,t)$,  and there exist
constants $\nu\geq0$ and $C>0$ such that for every $l\in[0,m]$
$$
E\sup_{t\leq T}e^{-\nu V}|u|_l^2
+E\int_0^Te^{-\nu V}|u|_{l+1}^2\,dt
$$
$$
\leq
C\left\{|u_0|^2_{l}+\int_0^Te^{-\nu V}(|f|_{l-1}^2+|g|_{l}^2)\,dt
+\int_0^Te^{-\nu V}|h^{\rho}|^2_l\,d\|B^{\rho}\|\right\},
$$
where $V(t)=t+\sum_{\rho=1}^{d_2}\|B^{\rho}(t)\|$. The constants
$\nu$ and $C$ depend only on $\lambda$, $K$, $d$, $d_1$, $d_2$ and $l$.
\end{thm}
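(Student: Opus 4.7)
The plan is to prove the weighted $L^2$ a priori estimate first: it subsumes uniqueness (apply it to the difference of two solutions with zero data) and constitutes the central analytic content. Existence will then follow by Galerkin approximation together with the usual weak-compactness argument. The weight $e^{-\nu V(t)}$ with $V(t)=t+\sum_{\rho=1}^{d_2}\|B^\rho\|(t)$ is engineered for the finite-variation drivers: since $d\big(e^{-\nu V}\big)=-\nu e^{-\nu V}\,dV$, each $d\|B^\rho\|$ produces a negative contribution that will absorb the positive errors generated by the $\cN^\rho$-integrals, provided $\nu$ is chosen large enough in terms of $\lambda,K,d,d_1,d_2,l$.

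For fixed $l\in[0,m]$ I would apply It\^o's formula to $e^{-\nu V(t)}|u(t)|_l^2$ — rigorously first on Galerkin approximations whose values lie in $C_0^\infty(\bR^d)$, then passing to the limit — and integrate by parts in $x$. The quadratic variation of the stochastic integrals contributes $|\cM^k u+g^k|_l^2\,dt$, and combined with the deterministic $2(\cL u+f,u)$-type pairing this yields, after absorbing commutator terms via Assumption \ref{assumption 7.22.1} and Young's inequality, the leading quadratic form $-2\big((\fra^{ij}-\tfrac12\frb^{ik}\frb^{jk})D_iu,D_ju\big)_l$. By Assumption \ref{assumption 2.5.1.12} with $\lambda>0$ this is bounded above by $-2\lambda|u|_{l+1}^2+C(|u|_l^2+|f|_{l-1}^2+|g|_l^2)$. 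The $\cN^\rho$-integral contributes at most $C(|u|_l^2+|h^\rho|_l^2)\,d\|B^\rho\|$ pathwise; once the weight is applied, the $-\nu e^{-\nu V}|u|_l^2\,d\|B^\rho\|$ term kills the bad $|u|_l^2\,d\|B^\rho\|$ contribution as soon as $\nu\geq C$. Taking expectation eliminates the Wiener martingale, and a Burkholder-Davis-Gundy estimate for that same martingale upgrades the bound to the supremum-in-$t$ form stated in the theorem.

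Higher-regularity estimates ($l\geq 1$) are obtained by differentiating the equation: $D^\alpha u$ with $|\alpha|\leq l$ solves an equation of the same structure, driven by the same processes, with commutator terms of order at most $l$ whose coefficients are controlled by Assumption \ref{assumption 7.22.1}, so iterating the base case yields the full $H^l$-bound. Existence is produced by the classical Galerkin scheme against a smooth basis of $C_0^\infty(\bR^d)$: the projected finite-dimensional system is an SDE driven by $W^k$ and the continuous finite-variation processes $B^\rho$ with Lipschitz coefficients, uniquely solvable by standard theory; the uniform a priori estimate above permits extraction of a weak $H^m$-limit that is identified with the generalised solution by denseness. Weak continuity in $H^m$, strong continuity in $H^{m-1}$, and membership $u(t)\in H^{m+1}$ for $P\otimes dt$-almost every $(\omega,t)$ follow from the estimate combined with interpolation and standard weak-convergence arguments. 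The one genuine obstacle is ensuring that $\nu$ and $C$ depend only on the stated parameters and not on the random total variation $\|B^\rho\|(T)$; this is precisely what the weight $e^{-\nu V}$ is designed to accomplish, and is the device that brings finite-variation drivers within the scope of the classical $L^2$-theory of Krylov and Rozovskii.
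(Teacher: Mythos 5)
Your outline is correct and follows precisely the route the paper implicitly assigns to this statement: the paper gives no proof of Theorem \ref{theorem 7.22.1} beyond the remark following Theorem \ref{theorem 1.6.1.12} that both can be proved as in \cite[Theorem~3.1]{Gyongy-Krylov}, and that reference uses exactly the devices you describe --- the It\^o formula applied to $e^{-\nu V}|u|_l^2$ on Galerkin approximants, the reduction to the quadratic form $-2\big((\fra^{ij}-\tfrac12\frb^{ik}\frb^{jk})D_iu,D_ju\big)$ via the stochastic parabolicity assumption, absorption of the $d\|B^{\rho}\|$ contributions by choosing $\nu$ large, Burkholder--Davis--Gundy for the supremum, and multi-index iteration for $l\ge 1$. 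Your closing observation that the exponential weight is precisely what makes $\nu$ and $C$ independent of the random total variation $\|B^\rho\|(T)$ is the essential point.
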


In the degenerate case, i.e.,
when $\lambda=0$ in Assumption \ref{assumption 2.5.1.12},
we need to impose somewhat stronger conditions in the other
assumptions of the previous theorem.

\begin{thm}                                                                                                       \label{theorem 1.6.1.12}
Let Assumptions \ref{assumption 2.5.1.12} (with $\lambda=0$),
\ref{assumption 7.22.1} and  \ref{assumption 7.22.3} hold.
Assume, moreover, that the derivatives in $x\in\bR^d$ of $a^{ij}$
up to order $m\vee2$ are bounded real functions on
$\Omega\times[0,T]\times\bR^d$ for all $i,j=1,...,d$, and that
$g^k=g^k(t)$ are $H^{m+1}$-valued predictable processes
for all $k=1,...,d_1$, such that almost surely
$$
\int_0^T|g(t)|_{m+1}^2\,dt<\infty.
<\infty.
$$
 Then the conclusion of Theorem \ref{theorem 7.22.1} remains valid.
\end{thm}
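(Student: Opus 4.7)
The strategy is a vanishing viscosity argument. For $\varepsilon>0$ I replace $\fra^{ij}$ by $\fra^{ij}_\varepsilon:=\fra^{ij}+\varepsilon\delta^{ij}$, leaving all other coefficients and data unchanged. Then Assumption \ref{assumption 2.5.1.12} holds with $\lambda_\varepsilon=\varepsilon>0$ in place of $\lambda=0$, the smoothness hypotheses on $\fra^{ij}_\varepsilon$ in $x$ match those of $\fra^{ij}$, and Theorem \ref{theorem 7.22.1} provides a unique generalised solution $u_\varepsilon$ of the perturbed problem, which is $H^m$-valued weakly continuous and $H^{m-1}$-valued strongly continuous and satisfies the estimate there with a constant that depends on $\varepsilon$ through $\lambda_\varepsilon$. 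The plan is then to establish $\varepsilon$-uniform estimates on $u_\varepsilon$ in $H^m$ and pass to the limit $\varepsilon\downarrow 0$.

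The heart of the argument is to bound $|u_\varepsilon(t)|_l$ for $l\leq m$ uniformly in $\varepsilon$. I would apply It\^o's formula to $e^{-\nu V(t)}|D^\alpha u_\varepsilon(t)|_0^2$ for each multi-index $|\alpha|\leq l$ and sum. After integrating by parts in the principal deterministic term, the top-order contribution takes the form
\[
-2\bigl((\fra^{ij}+\varepsilon\delta^{ij})D_iD^\alpha u_\varepsilon,\,D_jD^\alpha u_\varepsilon\bigr)_0+\bigl|D^\alpha(\cM^k u_\varepsilon+g^k)\bigr|_0^2,
\]
plus commutator terms arising from $[D^\alpha,\fra^{ij}D_iD_j]$, $[D^\alpha,\frb^{ik}D_i]$ and lower-order operators. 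The stochastic parabolicity $(\fra^{ij}-\tfrac12\frb^{ik}\frb^{jk})z^iz^j\geq 0$ reduces the top-order quadratic form to a non-positive viscosity term $-2\varepsilon|\nabla D^\alpha u_\varepsilon|_0^2$ together with cross terms of the schematic shape $(\frb^{ik}D_iD^\alpha u_\varepsilon,\,D^\alpha g^k)_0$; these are integrated by parts in $x$ to shift one derivative onto $g^k$, yielding the bound $C|u_\varepsilon|_l\,|g|_{l+1}$ and explaining the hypothesis $g\in H^{m+1}$. The commutator $[D^\alpha,\fra^{ij}D_iD_j]u_\varepsilon$ involves at most two derivatives of $\fra^{ij}$, which justifies the requirement of $m\vee 2$ bounded derivatives of $\fra^{ij}$. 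After Cauchy--Schwarz to handle mixed terms, discarding the non-positive viscosity contribution, and invoking a stochastic Gronwall lemma driven by $V(t)=t+\sum_{\rho}\|B^\rho\|(t)$, one obtains the estimate of Theorem \ref{theorem 7.22.1} with a constant $C$ independent of $\varepsilon$.

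With the uniform bound in hand, I would extract a weakly$^{\ast}$ convergent subsequence $u_{\varepsilon_n}\rightharpoonup u$ in $L^2(\Omega\times[0,T];H^m)$. Linearity of the equation together with the test-function formulation of Definition \ref{Definition 2.3.3.12} allows passage to the limit, identifying $u$ as a generalised solution of \eqref{7.22.1}--\eqref{2.5.1.12}, and the energy estimate transfers to $u$ by lower semicontinuity of the norms. Weak $H^m$-continuity and strong $H^{m-1}$-continuity of $u$ are recovered as in the non-degenerate case by applying It\^o's formula to $|u|_{m-1}^2$, where the degenerate coercivity $(\fra^{ij}-\tfrac12\frb^{ik}\frb^{jk})z^iz^j\geq 0$ already suffices for the principal part at that lower order. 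Uniqueness follows by applying It\^o to $e^{-\nu V}|u-\tilde u|_0^2$ for two solutions and concluding by Gronwall. The main obstacle is precisely the uniform estimate: without $\lambda$-coercivity to absorb the full-order quantity $|\nabla D^\alpha u_\varepsilon|_0^2$, every appearance of an $(l+1)$-th derivative of $u_\varepsilon$ must be organised either inside the non-negative form $(\fra^{ij}-\tfrac12\frb^{ik}\frb^{jk})D_iD^\alpha u_\varepsilon\,D_jD^\alpha u_\varepsilon$ or paired, via integration by parts, with a derivative of $g^k$, $\fra^{ij}$ or $\frb^{ik}$ that the extra regularity now provides.
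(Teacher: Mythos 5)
Your vanishing-viscosity scheme is the standard route and is precisely what the paper itself invokes: it does not prove Theorem~\ref{theorem 1.6.1.12} directly but refers to \cite[Theorem~3.1]{Gyongy-Krylov}, which is established by exactly this kind of elliptic regularisation plus $\varepsilon$-uniform a priori estimates based on the degenerate stochastic parabolicity condition. So in substance your approach coincides with the one the paper relies on.

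Two small points worth tightening. First, the role of the $m\vee 2$ bound on derivatives of $\fra^{ij}$ is slightly misdescribed: it is not that the commutator $[D^\alpha,\fra^{ij}D_iD_j]$ contributes ``at most two'' derivatives of $\fra^{ij}$ (for $|\alpha|=l$ it contributes up to $l$), but rather that the odd-derivative remainder $-\big(D_j\fra^{ij}\,D_iD^\alpha u,D^\alpha u\big)_0=\tfrac12\big(D_iD_j\fra^{ij},|D^\alpha u|^2\big)_0$ forces a second integration by parts and hence needs $D_iD_j\fra^{ij}$ bounded, which is nontrivial only when $m=1$ (and is the content of Lemma~2.1 and Remark~2.1 of \cite{KR} used elsewhere in the paper). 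Second, since Assumption~\ref{assumption 7.22.3} gives only almost-sure finiteness of the data, the $L^2(\Omega\times[0,T];H^m)$ weak compactness step needs a localisation by stopping times (and the uniform estimate after discarding the viscosity retains only $E\sup e^{-\nu V}|u_\varepsilon|_l^2$, not $E\int e^{-\nu V}|u_\varepsilon|_{l+1}^2\,dt$, so the $H^{l+1}$ part of the energy bound is not inherited by the limit in the purely degenerate setting). These are standard technicalities and do not change the overall conclusion that your proposal and the paper's cited argument are essentially the same.
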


Theorem \ref{theorem 1.6.1.12} is a slight modification of \cite[Theorem~3.1]{Gyongy-Krylov} and can be proved in the same way.
We can prove Theorem \ref{theorem 7.22.1} in the same fashion.

\subsection{Inequalities in Sobolev spaces and a Gronwall-type lemma}

In the following lemmas we present some estimate we
use in the paper. We consider the differential operators
$$
\cM=b^iD_i+b^0,\qquad \cN=c^iD_i+c^0,\qquad \cK=d^iD_i+d^0,
$$
and
$$
\cL=a^{ij}D_iD_j+a^iD_i+a^0,
$$
where
$a^{ij}$, $a^i$, $a^0$, $b^i$, $b^0$, $c^i$, $c^0$, $d^i$ and $d^0$ are
Borel functions defined on $\Real^d$ for $i,j=1,\ldots,d$.
We fix an integer $l\ge0$ and
a constant $K$.
Recall the notation $(\cdot\,,\cdot)=(\cdot\,,\cdot)_0$ for the inner
product in $H^0\equiv L^2(\Real^n)$, and $\<\cdot\,,\cdot\>$ for the duality product
between $H^1$ and $H^{-1}$.

\begin{lem}                                                                                       \label{Lemma:Gyongy}
(i) Assume that $b^0$ and its derivatives up to order $l$, and $b^i$
  and their derivatives up to order $l\vee1$ are real functions,
  in magnitude bounded by $K$.  Then for a constant $C=C(K,l,d)$
  \begin{align}                                                                         \label{3.6.1.12}
  |(D^{\alpha}\cM v,D^{\alpha}v)|&\leq C|v|_l^2,
      \\
   |(D^{\alpha}\cM v,D^{\alpha}u)
   +(D^{\alpha}\cM u,D^{\alpha}v)|
   &\leq  C|v|_l |u|_l
   \end{align}

      for all $u,v\in H^{l+1}$ and multi-indices $\alpha$, $|\alpha|\leq l$.

 (ii) Assume that $b^0$, $c^0$ and their derivatives up to order $l\vee1$,
 $b^{i}$ and $c^{i}$ and their derivatives up to order $(l+1)\vee2$ are
 real functions, in magnitude bounded by $K$.
 Then for a constant $C=C(K,l,d)$
$$
|(D^{\alpha}\cM\cN v,D^{\alpha}v)
+(D^{\alpha}\cM v,D^{\alpha}\cN v)|
\leq C|v|_l^2,
$$
for all $v\in H^{l+2}$ and multi-indices $\alpha$, $|\alpha|\leq l$.

(iii) Assume that $b^0$, $c^0$, $d^0$
and their derivatives up to order $(l+1)\vee2$, $b^i$, $c^i$, $d^i$
and their derivatives up to order $(l+2)\vee3$ are real functions,
and in magnitude are bounded
by $K$ for $i=1,...,d$. Then  for a constant $C=C(K,l,d)$
\begin{align*}
&|(D^{\alpha}\cM\cN\cK v,D^{\alpha}v)
 +(D^{\alpha}\cM\cN v,D^{\alpha}\cK v)\\
 &+(D^{\alpha}\cM\cK v,D^{\alpha}\cN v)
 +(D^{\alpha}\cM v,D^{\alpha}\cN\cK v)|
  \leq C|v|_l^2,
\end{align*}
for all $v\in H^{l+3}$ and multi-indices $\alpha$, $|\alpha|\leq l$.
\end{lem}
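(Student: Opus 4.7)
The plan rests on two basic tools: the Leibniz commutator $D^\alpha(\cP v)=\cP D^\alpha v+[D^\alpha,\cP]v$ for a first-order operator $\cP$, where $[D^\alpha,\cP]v$ is a differential operator in $v$ of order $|\alpha|$ whose coefficients are derivatives of those of $\cP$ of orders between $1$ and $|\alpha|$; and the divergence identity $\int b^iD_iw\cdot w\,dx=-\tfrac12\int(D_ib^i)w^2\,dx$, which converts the top-order derivative on $w$ into one derivative on the coefficient. With $w=D^\alpha v$, every inner product will reduce to a quadratic expression in $w$ whose coefficients can be controlled by Cauchy--Schwarz within the regularity budget stated in each part.

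For (i), decompose $D^\alpha(\cM v)=\cM w+r$ with $r=[D^\alpha,\cM]v$. Then $|(r,w)|\le|r|_0|v|_l\le C|v|_l^2$, using the $L^\infty$ bounds on $D^\beta b^i,D^\beta b^0$ for $|\beta|\le l$, while $(\cM w,w)=-\tfrac12((D_ib^i)w,w)+(b^0w,w)\le C|v|_l^2$ by the divergence identity, consuming one derivative of $b^i$. These exactly match the budget $l\vee 1$ on $b^i$ and $l$ on $b^0$. The second inequality in (i) is the same argument applied to the symmetric pair $(\cM w,u_\alpha)+(\cM u_\alpha,w)$ with $u_\alpha=D^\alpha u$, where $\int b^iD_i(wu_\alpha)\,dx=-\int(D_ib^i)wu_\alpha\,dx$.

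For (ii), the pivotal identity at the principal level is
\begin{equation*}
(b^ic^jD_iD_jw,w)+(b^iD_iw,c^jD_jw)=\tfrac12\int D_iD_j(b^ic^j)\,w^2\,dx,
\end{equation*}
obtained by integrating $D_j$ by parts in the first term --- the resulting cross-product $-\int b^ic^j D_iw\,D_jw\,dx$ exactly cancels the second inner product --- and then integrating $D_i$ by parts in the residue $-\int D_j(b^ic^j)D_iw\cdot w\,dx$. This costs two derivatives of $b^i,c^i$. The lower-order terms in $\cM\cN-b^ic^jD_iD_j$ and the commutator contributions from $[D^\alpha,\cM\cN]$, $[D^\alpha,\cM]$, $[D^\alpha,\cN]$ are controlled by (i) and Cauchy--Schwarz; crucially, the commutator errors that individually contain $l+1$ derivatives of $v$ combine in matched pairs and reduce to $|v|_l^2$ after one further integration by parts, matching the hypothesis $(l+1)\vee 2$ on $b^i,c^i$ and $l\vee 1$ on $b^0,c^0$.

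For (iii), the same mechanism plays out one order higher. Writing $\cN^*=-\cN+(2c^0-D_jc^j)$ and $\cK^*=-\cK+(2d^0-D_kd^k)$, the principal part of the four-term sum equals
\begin{equation*}
\bigl((\cM\cN\cK+\cK^*\cM\cN+\cN^*\cM\cK+\cK^*\cN^*\cM)w,w\bigr).
\end{equation*}
The order-$3$ symbols cancel identically; the remaining order-$2$ contributions collapse via the Jacobi-type identity $\cM\cN\cK-\cN\cM\cK-\cK\cM\cN+\cK\cN\cM=[[\cM,\cN],\cK]$ into a first-order operator $P$ whose coefficients involve two derivatives of $b^i,c^i,d^i$ and one of $b^0,c^0,d^0$. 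Applying (i) to $(Pw,w)$ then consumes one further derivative of each, matching $(l+2)\vee 3$ and $(l+1)\vee 2$. The main obstacle is the commutator bookkeeping: the errors from $[D^\alpha,\cdot]$ generically contain one or two more derivatives of $v$ than $|v|_l$ permits, and one must verify that they pair off in the prescribed symmetric sums without producing new higher-order residues. This requirement pins down the order of integration by parts at every step and is precisely why the lemma needs the full symmetric combinations in (ii) and (iii) rather than their individual terms.
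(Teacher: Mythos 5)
Your part (i) coincides with the paper's argument: integrate by parts once, split off the commutator $[\cM,D^\alpha]$, and polarize. Parts (ii) and (iii), however, pursue a genuinely different route --- direct symbol-level cancellation via adjoints and a Jacobi-type identity --- and that route, as you present it, has a gap that you yourself flag. Your ``principal part'' computations (the cancellation of the order-$3$ and order-$2$ symbols, the reduction to a first-order operator $P$) are correct, but they are carried out as if $D^\alpha$ commuted with $\cM$, $\cN$, $\cK$. The actual expansion produces residues such as $([D^\alpha,\cM\cN\cK]v,\,D^\alpha v)$, and $[D^\alpha,\cM\cN\cK]$ is a differential operator of order $|\alpha|+2$, so this term involves $|v|_{l+2}|v|_l$ and cannot be absorbed by itself; it becomes controllable only after pairing with $([D^\alpha,\cM\cN]v,\,D^\alpha\cK v)$ and the remaining two residues and integrating by parts. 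You write that ``one must verify that they pair off in the prescribed symmetric sums'' but leave that verification undone. That verification is the hard part of (iii) --- without it you have a plausible plan, not a proof --- and the same remark applies, one order lower, to the commutator errors ``that individually contain $l+1$ derivatives of $v$'' which you assert ``combine in matched pairs'' in (ii).

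The paper avoids this bookkeeping entirely through a polarization device that you do not use. Having established the identity $(D^\alpha\cM v,D^\alpha v)=\tfrac12(D^\alpha v,\bar m D^\alpha v)+([\cM,D^\alpha]v,D^\alpha v)$ with $\bar m=2b^0-D_ib^i$, the paper polarizes it to a bilinear identity $a(u,v)=b(u,v)+c(u,v)$ and substitutes $u=\cN v$: every term of the resulting expression for (ii) is, by construction, of the same shape as what was already estimated in (i), so the bound transfers without fresh integrations by parts. For (iii) it re-polarizes the (ii)-identity and substitutes $u=\cK v$, again inheriting the estimate from the previous step. Your closing observation --- that the lemma must concern the full symmetric combinations and not the individual terms --- is exactly right; the symmetric structure is precisely what the polarization manufactures automatically. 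To close your route you would need to expand the four $[D^\alpha,\cdot]$ residues by Leibniz, match the top-order pieces pairwise across the four summands, and integrate by parts to bring each matched pair down to $|v|_l^2$; adopting the paper's polarization makes that work unnecessary.
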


\begin{proof}
These and similar estimates are proved in \cite{G2}.
For the sake of completeness and the convenience of the reader
we present a proof here. We can assume that $v\in C^\infty_0(\Real^n)$.
Let us start with $(i)$. Integrating by parts,
we have
$$
(\cM D^\alpha v,D^\alpha v)
=-(D^\alpha v,\cM D^\alpha v)+(D^\alpha v,\bar{m}D^\alpha v),
$$
where $\bar{m}:=2b^0-\sum_{i=1}^dD_ib^i$.
Therefore, by writing $[\cM,D^\alpha]=D^\alpha\cM-\cM D^\alpha$,
\begin{align*}
    (D^\alpha \cM v,D^\alpha v) &= (\cM D^\alpha v,D^\alpha v)
    +([\cM,D^\alpha]v,D^\alpha v) \\
     &=\tfrac{1}{2}(D^\alpha v,\bar{m}D^\alpha v)
     +([\cM,D^\alpha]v,D^\alpha v)\leq C|v|_l^2,
\end{align*}
by the regularity assumed  on the coefficients.
 Let us write
$$
p(v):=(D^\alpha \cM v,D^\alpha v)
=\tfrac{1}{2}(D^\alpha v,\bar{m}D^\alpha v)
+([\cM,D^\alpha]v,D^\alpha v)=:q(v)+r(v).
$$
Defining
$$
2a(u,v):=p(u+v)-p(u)-p(v)=(D^\alpha \cM u,D^\alpha v)
+(D^\alpha \cM v,D^\alpha u),
$$
$$
2b(u,v):=q(u+v)-q(u)-q(v)=(\bar{m}D^\alpha u,D^\alpha v),
$$
and
$$
2c(u,v):=r(u+v)-r(u)-r(v)
=([\cM,D^\alpha]u,D^\alpha v)+([\cM,D^\alpha]v,D^\alpha u),
$$
we have
\begin{equation}                                                                            \label{6.6.1.12}
a(u,v)=b(u,v)+c(u,v)
\end{equation}
and
$$
|a(u,v)|\leq |b(u,v)|+|c(u,v)|\leq C|u|_l|v|_l,
$$
which proves the second inequality in \eqref{3.6.1.12}.
The identity \eqref{6.6.1.12}
applied with $u=Nv$ establishes that
$$
(D^\alpha \cM\cN v,D^\alpha v)+(D^\alpha \cM v,D^\alpha \cN v)
 = (\bar{m}D^\alpha \cN v,D^\alpha v)
 $$
 $$
 +([\cM,D^\alpha]\cN v,D^\alpha v)+([\cM,D^\alpha]v,D^\alpha\cN v).
$$
By the previous case,
$$
|(D^\alpha \cM\cN v,D^\alpha v)+(D^\alpha \cM v,D^\alpha \cN v)|
\leq C|v|_l^2,
$$
and $(ii)$ is proved.
For $(iii)$, integrating by parts,
\begin{align*}
\widetilde{p}(v)
:=&(D^\alpha \cM\cN v,D^\alpha v)+(D^\alpha \cM v,D^\alpha
\cN v) \\
=&(D^\alpha \cN v,\bar{m}D^\alpha v)
+([\cM,D^\alpha \cN]v,D^\alpha v)+([\cM,D^\alpha]v,D^\alpha\cN v)\\
=&\widetilde{q}(v)+\widetilde{r}(v)+\widetilde{s}(v).
\end{align*}
By polarizing this last identity as above and letting $u=\cK v$,
we have
$$
(D^\alpha \cM\cN\cK v,D^\alpha v)
+(D^\alpha\cM\cN v,D^\alpha \cK v)
$$
$$
+(D^\alpha\cM\cK v,D^\alpha\cN v)
    +(D^\alpha \cM v,D^\alpha \cN\cK v)
$$
$$
=(D^\alpha \cN\cK v,\bar{m}D^\alpha v)
     +(D^\alpha \cN v,\bar{m}D^\alpha \cK v)
$$
$$
+([\cM,D^\alpha \cN]\cK v,D^\alpha v)
+([\cM,D^\alpha \cN]v,D^\alpha \cK v)
$$
$$
([\cM,D^\alpha ]\cK v,D^\alpha\cN v)
     +([\cM,D^\alpha]v,D^\alpha \cN\cK v)\leq  C|v|_l^2,
$$
where in the last inequality we used $(ii)$. Hence $(iii)$ is proved.
\end{proof}

\begin{lem}                                                                                  \label{lemma 1.10.1.12}
Assume that $a^{ij}$, $b^0$ and their derivatives
up to order $l\vee1$, $b^i$ and their derivatives up to order
$l\vee2$, $a^i$, $a^0$ and their derivatives up to order $l$
are real functions, in magnitude bounded by $K$ for $i=1,...,d$.
Then  for a constant $C=C(K,l,d)$
$$
|(D^{\alpha}\cM v,D^{\alpha}\cL v)+\<D^{\alpha}v,D^{\alpha}\cM\cL v\>|
\leq C|v|_{l+1}^2,
$$
for $v\in H^{l+2}$ and multi-indices $|\alpha|\leq l$.
\end{lem}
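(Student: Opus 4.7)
My plan is to follow the same pattern as the proof of Lemma~\ref{Lemma:Gyongy}(i)--(iii): reduce to $v \in C_0^\infty(\bR^d)$ by density (so that $\<\cdot,\cdot\>$ coincides with the $L^2$ inner product), derive an algebraic identity using the adjoint of $\cM$, and then bound each resulting term by integration by parts. Writing $\cM^* = -b^iD_i + (b^0 - D_ib^i)$ for the formal $L^2$-adjoint, one has $\cM + \cM^* = \bar m := 2b^0 - D_ib^i$, a multiplication operator. Combining this with $D^\alpha \cM v = \cM D^\alpha v + [D^\alpha,\cM]v$ and $D^\alpha \cM\cL v = \cM D^\alpha \cL v + [D^\alpha,\cM]\cL v$, one obtains, for smooth $v$, the identity
\begin{align*}
&(D^\alpha \cM v, D^\alpha \cL v) + \<D^\alpha v, D^\alpha \cM\cL v\> \\
&\qquad = (\bar m D^\alpha v, D^\alpha \cL v) + ([D^\alpha, \cM]v, D^\alpha \cL v) + (D^\alpha v, [D^\alpha, \cM]\cL v).
\end{align*}

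I would then bound each of the three terms by $C|v|_{l+1}^2$. Splitting $\cL = a^{ij}D_iD_j + a^iD_i + a^0$, the first- and zero-order contribution of $(\bar m D^\alpha v, D^\alpha \cL v)$ is bounded directly by Cauchy--Schwarz, while for the second-order part one expands $D^\alpha(a^{ij}D_iD_jv)$ via Leibniz and performs one integration by parts in $D_i$ to redistribute the two excess derivatives symmetrically onto the $\bar m D^\alpha v$-factor (legitimate because $\bar m$, $D\bar m$, $a^{ij}$ and $Da^{ij}$ are bounded by hypothesis). For the second term on the right I would use $D^\alpha \cL v = \cL D^\alpha v + [D^\alpha,\cL]v$: the $[D^\alpha,\cL]v$-contribution is immediately $\le C|v|_l|v|_{l+1}$ by Cauchy--Schwarz, reducing the problem to $([D^\alpha,\cM]v, \cL D^\alpha v)$. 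The third term is treated analogously by expanding $[D^\alpha,\cM]\cL v$ via Leibniz and integrating by parts to pass derivatives off $\cL v$.

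The main obstacle is the top-order commutator piece $T_\alpha v := (D^\alpha b^i)D_iv + (D^\alpha b^0)v$ corresponding to $|\beta| = |\alpha|$ in the expansion $[D^\alpha,\cM] = \sum_{|\beta|\ge 1}\binom{\alpha}{\beta}\bigl[(D^\beta b^i)D^{\alpha-\beta}D_i + (D^\beta b^0)D^{\alpha-\beta}\bigr]$: its coefficients $D^\alpha b^i$ and $D^\alpha b^0$ are merely assumed bounded, so no further derivatives of $b^i, b^0$ are available. I would handle its two contributions $(T_\alpha v, \cL D^\alpha v)$ and $(D^\alpha v, T_\alpha \cL v)$ jointly, integrating by parts only on the $v$-dependent factors and never on the non-differentiable coefficients $D^\alpha b^i, D^\alpha b^0$. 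This step exploits the top-order cancellation between $(D^\alpha \cM v, D^\alpha \cL v)$ and $(D^\alpha v, D^\alpha \cM\cL v)$ that is already visible in the constant-coefficient case, where a direct Fourier computation (or equivalently three integrations by parts) shows that the sum vanishes identically. It uses exactly the hypothesis that $a^{ij}$ and $D a^{ij}$ are bounded -- i.e., $a^{ij}$-regularity up to order $l \vee 1$ -- together with the boundedness of $a^i, a^0$ to order $l$, thereby exhausting the regularity the hypothesis provides.
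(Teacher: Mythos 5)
You reproduce the paper's identity — after the density reduction, $\cM+\cM^*=\bar m$ gives
\[
(D^\alpha\cM v,D^\alpha\cL v)+\langle D^\alpha v,D^\alpha\cM\cL v\rangle
=(\bar m D^\alpha v,D^\alpha\cL v)+([D^\alpha,\cM]v,D^\alpha\cL v)+(D^\alpha v,[D^\alpha,\cM]\cL v)
\]
— and your treatment of the first term and of all the sub-top-order commutator pieces is fine. The gap is exactly where you flag one, and your proposed fix does not close it. For $|\alpha|=l$, the two quantities $(T_\alpha v,\cL D^\alpha v)$ and $(D^\alpha v,T_\alpha\cL v)$ exhibit no useful cancellation: for $l\ge 2$ the second is already of size $C|v|_{l+1}^2$ on its own (both factors have order $\le l+1$), so it offers nothing to offset the first; and $(T_\alpha v,\cL D^\alpha v)$ pairs an order-$1$ expression with one of order $l+2$, so a derivative must be moved, and however you move it a factor $D(D^\alpha b^i)$ or $D(D^\alpha b^0)$ appears — an $(l+1)$-st derivative of $b$ that the hypothesis (order $l\vee 2$ for $b^i$, order $l\vee 1$ for $b^0$) does not control once $l\ge 2$ (resp.\ $l\ge 1$). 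Your appeal to the constant-coefficient case is vacuous here, since there the whole commutator $[D^\alpha,\cM]$ vanishes identically, so it says nothing about variable $b$ that is merely bounded with the stated number of bounded derivatives.

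The estimate is in fact genuinely out of reach from the stated bounds. Take $d=1$, $l=3$, $\cL=D^2$, $\cM=b_N D$ with $b_N(x)=-N^{-3}\sin(Nx)\chi(x)$ (so $b_N,\dots,b_N'''$ are bounded uniformly in $N$ while $b_N''''\sim N$), and $v=N^4\phi+\cos(Nx+\theta)\psi$ with $\sin\theta\int\phi'\psi\ne 0$: the problematic piece $(b_N'''v',D^5v)$ is of size $N^9$, the remaining pieces of the identity are $O(N^8)$, and $|v|_4^2\sim N^8$, so the left-hand side of the lemma grows like $N\,|v|_4^2$. What actually closes the argument — and what the paper has available where the lemma is invoked, since there $b^{ik}_n$ is controlled up to order $(l+1)\vee 2$ and $b^k_n$ up to order $l+1$ — is one more bounded derivative of each of $b^i$ and $b^0$; with that extra derivative every integration by parts you describe goes through and your outline is complete. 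As written, though, the ``joint cancellation'' step is a genuine gap, not a detail to be filled in.
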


\begin{proof}
Let us check first the case $l=0$.
Denote by $\cM^\ast$ the formal adjoint of $\cM$. We have
$$
|(\cM v,\cL v)+\<v,\cM\cL v\>=|((\cM+\cM^{\ast})v,\cL v)|
=|(-b^i_iv+2b^0v,\cL v)|\leq C |v|_1,
$$
where $b^i_i=D_ib^i$.
For the general case, let $\abs{\alpha}\leq l$ and write
\begin{align*}
    &(D^\alpha \cM v,D^\alpha \cL v)+\<D^\alpha v,D^\alpha \cM\cL v\> \\
     &= (D^\alpha Mv,D^\alpha Lv)
     +\<D^\alpha v,MD^\alpha \cL v\>+(D^\alpha v,[\cM,D^\alpha]\cL v) \\
     &= (D^\alpha Mv,D^\alpha \cL v)
     +(M^\ast D^\alpha v,D^\alpha \cL v)+(D^\alpha v,[\cM,D^\alpha]Lv) \\
     &= ([M,D^\alpha]v,D^\alpha \cL v)
     +(-b^i_iD^\alpha v+2b^0D^\alpha v,D^\alpha Lv)
     +(D^\alpha v,[\cM,D^\alpha]\cL v),
\end{align*}
from which the estimate follows.
\end{proof}

The next lemma is a standard fact for elliptic differential operators
$\cL=a^{ij}D_iD_j+a^iD_i+a^0$
\begin{lem}                                                                                      \label{Lemma:vLv}
Assume there exists a constant $\lambda>0$ such that
$$
a^{ij}(x)z^iz^j\geq\lambda\abs{z}^2,\quad\text{for all $z,x\in\bR^d$},
$$
and that the derivatives of $a^i$ and $a^0$ up to order $(l-1)\vee0$,
and the derivatives of $a^{ij}$ up to order $l\vee1$ are functions,
bounded by $K$, for $i,j=1,...,d$.
Then there is a constant $C=C(K\lambda,l,d)$ such that
$$
(D^{\alpha}v,D^{\alpha}\cL v)\leq C|v|_l^2-\frac{\lambda}{2}|v|_{l+1}^2,
$$
for all $v\in H^{l+2}$ and multi-indices $|\alpha|\leq l$.
\end{lem}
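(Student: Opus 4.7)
The plan is to decompose $(D^\alpha v, D^\alpha\cL v)$ via the three terms of $\cL v = a^{ij}D_iD_jv + a^iD_iv + a^0v$, extract ellipticity from the second-order piece, and absorb all remainders with Young's inequality.

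For the second-order part I would write $D^\alpha(a^{ij}D_iD_jv)=a^{ij}D^\alpha D_iD_jv+[D^\alpha,a^{ij}]D_iD_jv$. On the main piece, integrating by parts in $x_i$ gives $(D^\alpha v, a^{ij}D^\alpha D_iD_jv)=-(D_iD^\alpha v, a^{ij}D_jD^\alpha v)-((D_ia^{ij})D^\alpha v, D_jD^\alpha v)$. Ellipticity bounds the first term above by $-\lambda\sum_i|D_iD^\alpha v|_0^2$, providing the key negative quantity; the second is $\leq K|v|_l|v|_{l+1}$ by Cauchy--Schwarz. The commutator expands as $\sum_{|\beta|\geq 1}\binom{\alpha}{\beta}(D^\beta a^{ij})(D^{\alpha-\beta}D_iD_jv)$, and since $|\beta|\leq l\leq l\vee 1$ each $D^\beta a^{ij}$ is bounded, while $|D^{\alpha-\beta}D_iD_jv|_0\leq|v|_{l+1}$ because $|\beta|\geq 1$ reduces the derivative count, so these terms contribute at most $C|v|_l|v|_{l+1}$.

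For the first-order piece I would similarly split $(D^\alpha v, D^\alpha(a^iD_iv))$ as $(D^\alpha v, a^iD^\alpha D_iv)$, bounded by Cauchy--Schwarz as $\leq K|v|_l|v|_{l+1}$, plus the commutator $\sum_{|\beta|\geq 1}\binom{\alpha}{\beta}(D^\beta a^i)(D^{\alpha-\beta}D_iv)$. For $|\beta|\leq (l-1)\vee 0$ the factor $D^\beta a^i$ is bounded and $|D^{\alpha-\beta}D_iv|_0\leq|v|_l$, yielding $\leq C|v|_l^2$. The delicate case is $|\alpha|=|\beta|=l\geq 1$, for which $D^\alpha a^i$ exceeds the assumed regularity by one derivative; there, writing $D^\alpha=D_kD^{\alpha-e_k}$ for some $k$ with $\alpha_k\geq 1$ and integrating $D_k$ by parts off $D^\alpha a^i$ onto $D^\alpha v\cdot D_iv$ leaves $D^{\alpha-e_k}a^i$, which is now bounded, and produces two terms estimable by $C|v|_l|v|_{l+1}+C|v|_l^2$. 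The zeroth-order piece is handled by the same strategy.

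Collecting these estimates for each $\alpha$ yields $(D^\alpha v, D^\alpha\cL v)\leq -\lambda\sum_i|D_iD^\alpha v|_0^2+C|v|_l|v|_{l+1}+C|v|_l^2$. Summing over $|\alpha|\leq l$, using $\sum_{|\alpha|\leq l,\,i}|D_iD^\alpha v|_0^2\geq |v|_{l+1}^2-|v|_0^2$ (each $D^\gamma v$ with $1\leq|\gamma|\leq l+1$ is counted at least once), and applying Young's inequality to absorb $C|v|_l|v|_{l+1}$ into $\tfrac{\lambda}{2}|v|_{l+1}^2$, then produces the claimed bound. The main obstacle is the integration-by-parts step for the first-order (and analogously zeroth-order) part when $|\alpha|=|\beta|=l$, since the assumed regularity on $a^i$ is one less than on $a^{ij}$; the excess derivative must be moved from the coefficient onto $v$, in the same spirit as the commutator trick used in Lemma \ref{Lemma:Gyongy}.
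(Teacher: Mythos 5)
The paper states this lemma without proof, calling it ``a standard fact for elliptic differential operators,'' so there is no authorial argument to compare against; your proposal supplies the missing proof and is correct. Your decomposition --- Leibniz plus commutator, integration by parts of the main second-order piece to extract the ellipticity term $-\lambda\sum_i|D_iD^{\alpha}v|^2_0$, and absorption of the cross terms by Young's inequality --- is exactly the standard route, and you correctly identify and resolve the one delicate point: when $|\alpha|=|\beta|=l$ the commutator for the lower-order coefficients produces $D^{\alpha}a^i$ (resp.\ $D^{\alpha}a^0$), which is one derivative beyond the assumed regularity, and moving a single $D_k$ off the coefficient by parts reduces it to the bounded $D^{\alpha-e_k}a^i$ at the price of terms controlled by $|v|_l|v|_{l+1}$. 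One remark worth making explicit: as literally written the lemma cannot hold for each individual multi-index $\alpha$ (for instance $\alpha=0$ with $l\ge1$ gives a left side that does not involve order-$(l+1)$ derivatives at all, so it cannot dominate $-\tfrac{\lambda}{2}|v|_{l+1}^2$); what one actually proves, and what the paper uses in the proofs of Theorems \ref{theorem growth nondeg} and \ref{theorem:decay}, is the per-$\alpha$ bound $(D^{\alpha}v,D^{\alpha}\cL v)\le -\lambda\sum_i|D_iD^{\alpha}v|_0^2+C|v|_l|v|_{l+1}+C|v|_l^2$ followed by summation over $|\alpha|\le l$ --- precisely what your argument delivers. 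A minor cosmetic point is that, as in the paper's proof of Lemma \ref{Lemma:Gyongy}, one should first take $v\in C_0^{\infty}$ and conclude by density, which also sidesteps any concern about interpreting $D^{\alpha}a^i$ as a distribution before the integration by parts.
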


In the next two lemmas we assume that there exist vector fields
$$
\sigma^1=(\sigma^{i1}(x)),\ldots,\sigma^p=(\sigma^{ip}(x)),
$$
 such that $a^{ij}=\sigma^{ir}\sigma^{jr}$ for all $i,j=1,\ldots,d$. Set
\begin{equation*}                                                  \label{Kr}
\cN^r:=\sigma^{ir}D_i,\qquad r=1,\ldots,p,
\end{equation*}
and notice that if the $\sigma^r$ are differentiable then we can write
$\cL=\sum_{r=1}^p(\cN^r)^2+\cN^0$,
where $\cN^0=\left(a^j-\sigma^{ir}(D_i\sigma^{jr})\right)D_j+a^0$.

\begin{lem}                                                          \label{lemma 19.2.1.12}
Assume that the
derivatives of $\sigma$  up to order $(l+1)\vee2$
and the derivatives of $a^i$, $a^0$  up to order $l\vee1$
are functions, bounded by a constant $K$ for $i=1,...,d$.
Then
$$
(D^{\alpha}\cL v,D^{\alpha}v)\leq -\sum_{r=1}^p|D^{\alpha}\cN^rv|^2_0+C|v|^2_l,
$$
and
$$
|(D^{\alpha}\cL v,D^{\alpha}u)|\leq \sum_{r=1}^p|D^{\alpha}\cN^rv|^2_0+C(|v|_l^2+|u|^2_{l+1}),
$$
for all $v,u\in H^{l+2}$ and multi-indices $|\alpha|\leq l$, with a constant $C=C(K,d,l,p)$.
\end{lem}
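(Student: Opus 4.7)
The strategy is to exploit the sum-of-squares decomposition $\cL = \sum_{r=1}^p (\cN^r)^2 + \cN^0$, where
$$
\cN^0 = \big(a^j - \sigma^{ir}D_i\sigma^{jr}\big)D_j + a^0
$$
is first-order. Under the smoothness hypotheses, the coefficients of $\cN^0$ together with their derivatives up to order $l\vee 1$ are bounded by a constant depending only on $K$, while each $\cN^r = \sigma^{ir}D_i$ has no zeroth-order term and a first-order coefficient controlled up to order $(l+1)\vee 2$.

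For the first inequality, I would apply Lemma \ref{Lemma:Gyongy}(ii) with $\cM = \cN = \cN^r$; the hypotheses are exactly met by the regularity of $\sigma$, giving
$$
(D^\alpha(\cN^r)^2 v, D^\alpha v) + |D^\alpha \cN^r v|_0^2 \leq C|v|_l^2.
$$
Summing over $r$ and adding the estimate $(D^\alpha \cN^0 v, D^\alpha v) \leq C|v|_l^2$ obtained from Lemma \ref{Lemma:Gyongy}(i) yields the claim.

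For the second inequality I would commute the outer differentiations,
$$
(D^\alpha (\cN^r)^2 v, D^\alpha u) = ((\cN^r)^2 D^\alpha v, D^\alpha u) + ([D^\alpha, (\cN^r)^2]v, D^\alpha u),
$$
and transform the principal piece using the formal adjoint $(\cN^r)^{\ast} = -\cN^r - D_i\sigma^{ir}$:
$$
((\cN^r)^2 D^\alpha v, D^\alpha u) = -(\cN^r D^\alpha v, \cN^r D^\alpha u) - (\cN^r D^\alpha v, (D_i\sigma^{ir})D^\alpha u).
$$
Applying Young's inequality with a parameter $\epsilon$ to the first pairing, and using the commutator bound $|\cN^r D^\alpha v|_0 \leq |D^\alpha \cN^r v|_0 + C|v|_l$ together with $|\cN^r D^\alpha u|_0 \leq C|u|_{l+1}$, converts the pairing into $\epsilon(1+\eta)|D^\alpha \cN^r v|_0^2 + C(|v|_l^2 + |u|_{l+1}^2)$; choosing $\epsilon = 1/2$, $\eta = 1$ brings the leading coefficient down to exactly $1$. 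The lower-order pairing is bounded by $C|v|_l|u|_{l+1}$ and absorbed. The $\cN^0$ contribution is handled by a single integration by parts giving the same $|v|_l|u|_{l+1}$ type bound.

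The main obstacle is the commutator $[D^\alpha, (\cN^r)^2]v$: a Leibniz expansion shows that the multi-indices $\beta$ with $|\beta| = 1$ produce a term involving $l+1$ derivatives of $v$, which cannot be accommodated in an error of order $|v|_l$. The remedy is to integrate that leading term by parts once inside $([D^\alpha,(\cN^r)^2]v, D^\alpha u)$, shifting one derivative off $v$ onto the product of the (now once-more-differentiated) coefficient with $D^\alpha u$; the result is controlled by $C|v|_l|u|_{l+1}$ and absorbed by Young. Summing the resulting estimates over $r$ and adding the $\cN^0$ bound completes the proof.
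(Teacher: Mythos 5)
Your proposal is correct and follows essentially the same route as the paper: the sum-of-squares decomposition $\cL=\sum_r(\cN^r)^2+\cN^0$, Lemma \ref{Lemma:Gyongy} (i)--(ii) for the first inequality, and commutation of $D^\alpha$ past the $\cN^r$'s together with integration by parts and Young's inequality for the second. If anything, your treatment of the commutator term $[D^\alpha,(\cN^r)^2]v$ is more explicit than the paper's (which passes over the extra integration by parts needed to bound it by $|v|_l|u|_{l+1}$ rather than $|v|_{l+1}|u|_l$), though note that the bound you quote for the ``lower-order pairing'' as $C|v|_l|u|_{l+1}$ itself requires either that same integration by parts or the commutator identity $\cN^rD^\alpha v=D^\alpha\cN^rv-[\cN^r,D^\alpha]v$ rather than a direct Cauchy--Schwarz.
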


\begin{proof}
By Lemma \ref{Lemma:Gyongy} $(ii)$ and $(iii)$,
\begin{align*}
(D^{\alpha}v,D^{\alpha}\cL v)
=&(D^{\alpha}v,D^{\alpha}\cN^r\cN^r v)_0+(D^{\alpha}v,D^{\alpha}\cN^0v)_{0}         \\
\leq&
-(D^{\alpha}\cN^rv,D^{\alpha}\cN^rv)_{0}+C|v|_l^2,
\end{align*}
with a constant $C=C(K,d,l,p)$ and the first inequality of the statement follows. To get the second one we need only note
that by interchanging differential operators and by integration by parts we have
\begin{align*}
|(D^{\alpha}\cN^r\cN^rv,D^{\alpha}u)|
\leq& |(D^{\alpha}\cN^rv,\cN^rD^{\alpha}u)|
+|([\cN^r,D^{\alpha}]\cN^rv,D^{\alpha}u)|+C|u|_{l+1}^2                    \\
\leq& \sum_{r=1}^{p}|D^{\alpha}\cN^rv|^2_0+C|u|^2_{l+1},                         \\
|(D^{\alpha}\cN^0v,D^{\alpha}u)|\leq&
C(|v|_l^2+|u|_{l+1}^2),
\end{align*}
with constants $C=C(K,d,p,l)$.
\end{proof}

\begin{lem}                                                                                \label{Lemma:ML}
Assume that the derivatives of $\sigma^i$
and $b^i$ up to order $(l+2)\vee3$
and the derivatives of $a^i$, $a^0$ and $b^0$ up to order $(l+1)\vee2$
are functions, bounded by a constant $K$ for $i=1,...,d$.
Then
$$
|(D^{\alpha}\cM\cL v,D^{\alpha}v)
+(D^{\alpha}\cL v,D^{\alpha}\cM v)|\leq C\sum_{r=1}^p|\cN^rv|^2_l+C
|v|_l^2,
$$
with a constant $C=C(K,l,d,p)$ for all $v\in H^{l+3}$ and multi-indices
$|\alpha|\leq l$.
\end{lem}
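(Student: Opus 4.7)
The plan is to reduce everything to the previous lemmas by exploiting the ``square-root'' decomposition already mentioned before the statement, namely
$$
\cL = \sum_{r=1}^{p}\cN^{r}\cN^{r} + \cN^{0},
\qquad
\cN^{0}=\bigl(a^{j}-\sigma^{ir}D_{i}\sigma^{jr}\bigr)D_{j}+a^{0}.
$$
Substituting this into the expression to be estimated and splitting yields
\begin{align*}
(D^{\alpha}\cM\cL v,D^{\alpha}v)+(D^{\alpha}\cL v,D^{\alpha}\cM v)
  &= \sum_{r=1}^{p}\Bigl[(D^{\alpha}\cM\cN^{r}\cN^{r}v,D^{\alpha}v)
  +(D^{\alpha}\cN^{r}\cN^{r}v,D^{\alpha}\cM v)\Bigr] \\
  &\quad + (D^{\alpha}\cM\cN^{0}v,D^{\alpha}v)+(D^{\alpha}\cN^{0}v,D^{\alpha}\cM v).
\end{align*}
So the argument reduces to bounding the two brackets separately.

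For the first-order part $\cN^{0}$, I would appeal to Lemma \ref{Lemma:Gyongy}(ii) with the pair $(\cM,\cN^{0})$; the regularity required there (coefficients of $\cM$ up to order $(l+1)\vee2$ for the first-order part and $l\vee 1$ for the zeroth, and similarly for $\cN^{0}$) is covered by our hypotheses since the coefficient $a^{j}-\sigma^{ir}D_{i}\sigma^{jr}$ of $\cN^{0}$ inherits its regularity from $a^{j}$ and from one derivative of $\sigma$. This gives
$$
\bigl|(D^{\alpha}\cM\cN^{0}v,D^{\alpha}v)+(D^{\alpha}\cM v,D^{\alpha}\cN^{0}v)\bigr| \le C|v|_{l}^{2}.
$$

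For the second-order pieces, the key observation is to apply Lemma \ref{Lemma:Gyongy}(iii) with $\cN=\cK=\cN^{r}$, which produces
$$
\bigl|(D^{\alpha}\cM\cN^{r}\cN^{r}v,D^{\alpha}v)+2(D^{\alpha}\cM\cN^{r}v,D^{\alpha}\cN^{r}v)+(D^{\alpha}\cM v,D^{\alpha}\cN^{r}\cN^{r}v)\bigr| \le C|v|_{l}^{2}.
$$
The hypothesis on $\sigma^{i}$ up to order $(l+2)\vee 3$ is exactly what is needed to apply (iii) here. Therefore
$$
(D^{\alpha}\cM\cN^{r}\cN^{r}v,D^{\alpha}v)+(D^{\alpha}\cN^{r}\cN^{r}v,D^{\alpha}\cM v)
= -2(D^{\alpha}\cM\cN^{r}v,D^{\alpha}\cN^{r}v) + O(|v|_{l}^{2}).
$$
Finally, setting $w=\cN^{r}v\in H^{l+2}$, the first inequality of Lemma \ref{Lemma:Gyongy}(i) applied to $(D^{\alpha}\cM w,D^{\alpha}w)$ gives $|(D^{\alpha}\cM\cN^{r}v,D^{\alpha}\cN^{r}v)|\le C|\cN^{r}v|_{l}^{2}$. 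Summing over $r$ and combining with the $\cN^{0}$ estimate yields the advertised bound.

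There is no genuine obstacle here beyond careful bookkeeping: the proof is a three-line assembly of Lemma \ref{Lemma:Gyongy}(i), (ii), (iii). The only point requiring attention is verifying that the derivative orders imposed in the statement (up to $(l+2)\vee 3$ on $\sigma^{i}, b^{i}$ and up to $(l+1)\vee 2$ on $a^{i},a^{0},b^{0}$) exactly match what Lemma \ref{Lemma:Gyongy}(iii) demands when $\cN=\cK=\cN^{r}$ is a first-order operator with $\sigma^{r}$ coefficients and $\cN^{0}$ picks up one extra derivative of $\sigma$.
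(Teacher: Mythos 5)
Your proposal is correct and follows exactly the same route as the paper: decompose $\cL=\sum_r\cN^r\cN^r+\cN^0$, apply Lemma \ref{Lemma:Gyongy}(iii) with $\cN=\cK=\cN^r$ to isolate $2(D^{\alpha}\cM\cN^r v,D^{\alpha}\cN^r v)$, bound that term via Lemma \ref{Lemma:Gyongy}(i) applied to $w=\cN^r v$, and handle $\cN^0$ by Lemma \ref{Lemma:Gyongy}(ii). The bookkeeping of derivative orders you flag at the end also matches the paper's hypotheses.
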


\begin{proof}
Put $\cN=\cK=\cN^r$, $r=1,\ldots,p$ in Lemma \ref{Lemma:Gyongy} $(iii)$
and use $(i)$ of the same lemma for $\cN^rv$ to get
$$
|(D^{\alpha}\cM\cN^r\cN^rv,D^{\alpha}v)
+(D^{\alpha}\cM v,D^{\alpha}\cN^r\cN^rv)|
$$
$$
\leq C|v|_l^2+2|(D^{\alpha}\cM\cN^rv,D^{\alpha}\cN^rv)|
\leq C|v|_l^2+C\sum_{r=1}^{p}|\cN^rv|_l^2,
$$
and apply Lemma \ref{Lemma:Gyongy} $(ii)$ to obtain
\begin{equation*}
|(D^{\alpha}\cM\cN^0v,D^{\alpha}v)+(D^{\alpha}\cM v,D^{\alpha}\cN^0v)|
\leq C|v|_l^2,
\end{equation*}
which prove the corollary.
\end{proof}

The following Gronwall type lemma will be useful
for our estimates in the next section.

\begin{lem}                                                                                                    \label{Lemma:gronwall}
Let $y_n$, $m_n$, $Q_n$ and $q_n$
be sequences of real valued continuous $\cF_t$-adapted
stochastic processes given on the interval $[0,T]$,
such that $Q_n$ is a non-decreasing non-negative process
and $m_n$ is a local martingale starting from $0$.
Let $\delta$, $\gamma$ be some real numbers with $\delta<\gamma$.
Assume that almost surely
\begin{equation}\label{pregronwall}
0\leq y_n(t)\leq \int_0^t y_n(s) \,dQ_n(s)+m_n(t)+q_n(t),
\end{equation}
holds for all $t\in[0,\tau_n]$ and integers $n\geq1$, where
$$
\tau_n=\inf\set{t\geq 0:y_n(t)\geq n^{-\delta}}\wedge T.
$$
Suppose that almost surely
$$
Q_n(\tau_n)=o(\ln n),\quad\sup_{t\leq\tau_n}q_n(t)=O(n^{-\gamma}),
$$
$$
d\<m_n\>\leq (y_{n}^2+k_ny_n)\,dQ_n,
~\hbox{on}~t\in[0,\tau_n],\quad\int_0^{\tau_n}k_n(s)\,dQ_n=O(n^{-\gamma})
$$
for a sequence of non-negative ${\mathcal F}_t$-adapted processes $k_n$.
Then almost surely
\begin{equation}                                                                                                          \label{conclusion}
\sup_{t\leq T}y_n(t)=O(n^{-\kappa}),\quad\hbox{for each}~\kappa<\gamma.
\end{equation}
\end{lem}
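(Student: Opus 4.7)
The plan is to treat the integral inequality pathwise on $[0,\tau_n]$, combine it with a concentration estimate for the local martingale $m_n$, and then promote the resulting rate up to $\gamma$ by an iteration that, in the end, will force $\tau_n=T$ for $n$ large.

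First I would apply a deterministic Gronwall argument to \eqref{pregronwall}: because $y_n\ge0$, one may replace $m_n(s)+q_n(s)$ by its running positive supremum $F_n^{+*}(t):=\sup_{s\le t}(m_n(s)+q_n(s))^{+}$, and then the classical Gronwall lemma for Stieltjes integrators yields $y_n(t)\le F_n^{+*}(t)\,e^{Q_n(t)}$ for all $t\in[0,\tau_n]$. Since $Q_n(\tau_n)=o(\ln n)$ almost surely, one has $e^{Q_n(\tau_n)}=n^{o(1)}$, a sub-polynomial factor that is absorbed into any $n^{\varepsilon}$ loss; the $q_n$-contribution to $F_n^{+*}$ is $O(n^{-\gamma})$ by hypothesis. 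Thus the whole problem reduces to an almost-sure bound on $\sup_{s\le\tau_n}|m_n(s)|$.

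Second, using the hypothesis $d\<m_n\>\le(y_n^{2}+k_ny_n)\,dQ_n$ together with the pathwise bound $y_n\le n^{-\delta}$ on $[0,\tau_n]$, I obtain $\<m_n\>(\tau_n)\le n^{-2\delta}Q_n(\tau_n)+n^{-\delta}\int_0^{\tau_n}k_n\,dQ_n=n^{-2\delta}\,o(\ln n)+O(n^{-\delta-\gamma})$. To pass from this variance bound to an almost-sure control of $\sup|m_n|$, I would use an exponential-martingale (Bernstein-type) inequality combined with Borel--Cantelli. The difficulty that the variance bound is itself random is resolved by working on the high-probability event $\{Q_n(\tau_n)\le\varepsilon\ln n,\ \int_0^{\tau_n}k_n\,dQ_n\le C\,n^{-\gamma}\}$, which, for each deterministic $\varepsilon>0$ and sufficiently large (path-dependent) $C$, is eventually realised almost surely. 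On such an event the variance admits a deterministic majorant, the exponential tail bound applies, and summability over $n$ gives an almost-sure control of $\sup_{s\le\tau_n}|m_n(s)|$.

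The main obstacle is then the bootstrap. A single application of the previous step, with the rough input $y_n\le n^{-\delta}$, only produces a martingale bound of order $n^{-\delta+o(1)}$, and via the Gronwall step only $y_n=O(n^{-\kappa})$ for $\kappa<\delta$. To push the rate up to any $\kappa<\gamma$, I would re-insert the improved bound on $y_n$ into the variance estimate and iterate: once $y_n=O(n^{-\kappa_0})$ on $[0,\tau_n]$ for some $\kappa_0\in(\delta,\gamma)$, the $n^{-2\kappa_0}o(\ln n)$-term shrinks and, combined with the $O(n^{-\kappa_0-\gamma})$-term originating from $\int k_n\,dQ_n=O(n^{-\gamma})$, the cycle can be re-run. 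The delicate point is precisely the self-referential character of the estimate coming from the $y_n^{2}$ term in $d\<m_n\>$: every improvement of $y_n$ feeds back quadratically into the variance of $m_n$, and one has to exploit both the sub-logarithmic slack $o(\ln n)$ of $Q_n(\tau_n)$ and the $O(n^{-\gamma})$-smallness of $\int k_n\,dQ_n$ in a careful way to close the loop.

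Once the bootstrap yields $\sup_{t\le\tau_n}y_n(t)=O(n^{-\kappa})$ for some $\kappa>\delta$, this bound is eventually strictly smaller than $n^{-\delta}$; by continuity of $y_n$, the stopping time $\tau_n$ then coincides with $T$ almost surely for $n$ large, and \eqref{conclusion} follows on the whole interval $[0,T]$ for each $\kappa<\gamma$.
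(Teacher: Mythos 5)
The proposal takes a completely different route from the paper, and it has a genuine gap that prevents it from being a proof.

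The paper's own proof is a short reduction argument: it first observes that the case $\delta=0$, $\gamma>0$ is essentially Lemma~3.8 of \cite{Gyongy-Shmatkov} (whose proof exploits Lemma~3.7 of that paper), and then handles general $\delta<\gamma$ by the normalisation $y_n'=n^{\delta}y_n$, $m_n'=n^{\delta}m_n$, $q_n'=n^{\delta}q_n$, $k_n'=n^{\delta}k_n$, which sends $(\delta,\gamma)$ to $(0,\gamma-\delta)$ and leaves the form of all hypotheses invariant; the case $\gamma\le0$ is reduced to $\gamma>0$ by a second, analogous shift. Your proposal instead tries to prove the normalised lemma from scratch by Gronwall plus an exponential martingale inequality plus a bootstrap, and it is precisely that bootstrap that breaks down.

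Concretely: with the a~priori bound $y_n\le n^{-\delta}$ on $[0,\tau_n]$, the variance estimate gives $\langle m_n\rangle(\tau_n)\le n^{-2\delta}\,o(\ln n)+O(n^{-\delta-\gamma})$, in which, because $\delta<\gamma$, the first term dominates. Applying the exponential inequality together with Borel--Cantelli (which forces $\lambda_n^2/\langle m_n\rangle(\tau_n)\gtrsim\ln n$) therefore produces $\sup_{t\le\tau_n}|m_n(t)|=o(n^{-\delta}\ln n)$, i.e.\ $O(n^{-\kappa})$ only for $\kappa<\delta$. Feeding this through your Gronwall step, $y_n\le F_n^{+*}e^{Q_n}$ with $e^{Q_n(\tau_n)}=n^{o(1)}$, yields $\sup_{t\le\tau_n}y_n(t)=O(n^{-\kappa})$ only for $\kappa<\delta$ --- a \emph{weaker} statement than the defining bound $y_n\le n^{-\delta}$ already available on $[0,\tau_n]$. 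So the first pass does not improve the input bound, does not reach the threshold $\kappa>\delta$ needed to force $\tau_n=T$ by continuity, and re-inserting the unchanged bound into the variance estimate produces the same output. Your bootstrap step begins ``once $y_n=O(n^{-\kappa_0})$ for some $\kappa_0\in(\delta,\gamma)$\dots'', but no step in your argument produces such a $\kappa_0>\delta$; you acknowledge the self-referential character of the $y_n^2$ term in $d\langle m_n\rangle$ at the end, but the loop is in fact not closed. To make the ``martingale versus Gronwall'' approach work one must exploit the $o(\ln n)$ condition in a structurally different way --- this is exactly what the cited Lemma~3.8/3.7 of \cite{Gyongy-Shmatkov} does, and the present paper's contribution to the proof is the rescaling that reduces general $(\delta,\gamma)$ to that base case. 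As written, your sketch is not a proof.

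A smaller technical point: the exponential martingale inequality $P(\sup_{s\le\sigma}|M_s|\ge\lambda,\ \langle M\rangle_\sigma\le a)\le 2e^{-\lambda^2/(2a)}$ needs a deterministic $a$, so your ``high-probability event'' $\{Q_n(\tau_n)\le\varepsilon\ln n,\ \int_0^{\tau_n}k_n\,dQ_n\le Cn^{-\gamma}\}$ must be handled by a countable union over $C\in\mathbb N$ (and similarly the $O(n^{-\gamma})$ hypothesis only supplies a random, not deterministic, constant). This is fixable, but should be said; it is not the main obstruction.
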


\begin{proof}
Let us assume first that $\gamma>0$. The case $\delta=0$ is a
slight modification of  \cite[Lemma~3.8]{Gyongy-Shmatkov}.
It can be proved in the same way
by using a suitable generalization of Lemma 3.7 from \cite{Gyongy-Shmatkov}
(see \cite{konyv}).
For $\delta<\gamma\in(0,\infty)$, we see that the conditions
of the Lemma are satisfied with $\gamma'=\gamma-\delta$,
\begin{equation}\label{primes}
y_n'(t)=\frac{y_n(t)}{n^{-\delta}},\quad m_n'(t)
=\frac{m_n(t)}{n^{-\delta}},
\quad q_n'(t)=\frac{q_n(t)}{n^{-\delta}},
\quad k_n'(t)=\frac{k_n(t)}{n^{-\delta}},
\end{equation}
in place of $\gamma$, $y_n$, $m_n$, $q_n$ and $k_n$,
with $\delta=0$.
Hence we have \eqref{conclusion} for $y_n'$ in place of $y_n$
for each $\kappa<\gamma'$, which gives
\eqref{conclusion} in this case.

Suppose now that $\gamma\leq0$.
Take $\bar\gamma\in(\delta,\gamma)$
and set $\gamma':=\gamma-\bar\gamma$,
$\delta':=\delta-\bar\gamma$.
Define $y_n'$, $m_n'$, $q_n'$ and $k_n'$ as
in \eqref{primes} with $\bar\gamma$ in place of $\delta$. Notice
that $\gamma'>0$ and that the conditions
of the lemma are satisfied by the processes $y_n'$, $m_n'$, $q_n'$
and $k_n'$ in place of $y_n$, $m_n$, $q_n$ and $k_n$,
with $\gamma'$ and $\delta'$ instead of $\gamma$ and
$\delta$. Hence \eqref{conclusion} holds for $y_n'$
and $\gamma'$ in place of $y_n$ and $\gamma$, which gives
the lemma.
\end{proof}

\begin{cor}                                                                                            \label{Corollary:Gronwall stopped}
Let $\sigma_n$ be an increasing sequence of stopping times
converging to infinity almost surely.
Assume that the conditions of the previous lemma are satisfied with
$\bar\tau_n=\inf\{t\geq0:y_n(t)\geq n^{-\delta}\}\wedge T\wedge\sigma_n$
 in place of $\tau_n$.
Then its conclusion, \eqref{conclusion}, still holds.
\end{cor}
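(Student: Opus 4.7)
The plan is to reduce the corollary to Lemma \ref{Lemma:gronwall} by applying it to the sequences stopped at $\sigma_n$. Define
$$
\tilde y_n(t):=y_n(t\wedge\sigma_n),\quad \tilde m_n(t):=m_n(t\wedge\sigma_n),\quad \tilde q_n(t):=q_n(t\wedge\sigma_n),
$$
$$
\tilde Q_n(t):=Q_n(t\wedge\sigma_n),\qquad \tilde k_n(t):=k_n(t)\,\mathbf 1_{[0,\sigma_n]}(t),
$$
and set $\tilde\tau_n:=\inf\{t\geq 0:\tilde y_n(t)\geq n^{-\delta}\}\wedge T$. The processes $\tilde y_n,\tilde m_n,\tilde q_n,\tilde Q_n$ are continuous and adapted, $\tilde k_n$ is non-negative and adapted, $\tilde Q_n$ is nondecreasing, and $\tilde m_n$ remains a local martingale starting from $0$. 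A short case analysis based on whether $T_n:=\inf\{t:y_n(t)\geq n^{-\delta}\}$ lies before or after $\sigma_n$ shows that in every case $\tilde\tau_n\wedge\sigma_n=\bar\tau_n$; in particular $\tilde\tau_n$ may strictly exceed $\bar\tau_n$ precisely on the event $\{\sigma_n<T_n\wedge T\}$, where $\tilde y_n$ stays constant and equal to $y_n(\sigma_n)<n^{-\delta}$ after $\sigma_n$.

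The key step is to verify that the tilded processes satisfy the hypotheses of Lemma \ref{Lemma:gronwall} with $\tilde\tau_n$ in place of $\tau_n$. For $t\leq\bar\tau_n=\tilde\tau_n\wedge\sigma_n$ the tilded and original processes coincide, so \eqref{pregronwall} holds by assumption. The delicate case is $\sigma_n<t\leq\tilde\tau_n$, which can arise only when $\bar\tau_n=\sigma_n\leq T$; then $\tilde y_n(t)=y_n(\sigma_n)$, $\int_0^t\tilde y_n\,d\tilde Q_n=\int_0^{\sigma_n}y_n\,dQ_n$, and similarly for $\tilde m_n$ and $\tilde q_n$, so the required inequality at time $t$ is exactly the assumed inequality evaluated at the endpoint $\sigma_n=\bar\tau_n$. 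Using this freezing-at-$\sigma_n$ property together with $\tilde\tau_n\wedge\sigma_n=\bar\tau_n$ one checks that
$$
\tilde Q_n(\tilde\tau_n)=Q_n(\bar\tau_n)=o(\ln n),\qquad \sup_{t\leq\tilde\tau_n}\tilde q_n(t)=\sup_{t\leq\bar\tau_n}q_n(t)=O(n^{-\gamma}),
$$
that $d\langle\tilde m_n\rangle\leq(\tilde y_n^2+\tilde k_n\tilde y_n)\,d\tilde Q_n$ on $[0,\tilde\tau_n]$ (since $\tilde m_n$ is constant past $\sigma_n$), and that $\int_0^{\tilde\tau_n}\tilde k_n\,d\tilde Q_n=\int_0^{\bar\tau_n}k_n\,dQ_n=O(n^{-\gamma})$.

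Lemma \ref{Lemma:gronwall} then yields $\sup_{t\leq T}\tilde y_n(t)=O(n^{-\kappa})$ almost surely for each $\kappa<\gamma$. Since $\sigma_n\uparrow\infty$ almost surely, for almost every $\omega$ there exists $N(\omega)$ with $\sigma_n(\omega)\geq T$ for all $n\geq N(\omega)$, whence $\tilde y_n(\cdot,\omega)\equiv y_n(\cdot,\omega)$ on $[0,T]$ for such $n$. The finitely many remaining indices $n<N(\omega)$ contribute only the finite random quantity $\max_{n<N(\omega)}n^\kappa\sup_{t\leq T}y_n(t,\omega)$, which is finite by continuity of each $y_n$ and can be absorbed into the $O$-constant to give \eqref{conclusion} for $y_n$. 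I expect the main obstacle to lie in the verification of the integral inequality on $[0,\tilde\tau_n]$ across the seam $t=\sigma_n$, and in confirming that the identity $\tilde\tau_n\wedge\sigma_n=\bar\tau_n$ really suffices to transfer each of the hypotheses of Lemma \ref{Lemma:gronwall}; everything else is bookkeeping.
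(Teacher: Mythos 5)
Your proposal is correct and follows essentially the same route as the paper: replace $y_n,m_n,q_n,Q_n,k_n$ by the processes stopped at $\sigma_n$, apply Lemma~\ref{Lemma:gronwall} to the stopped data, and then use $\sigma_n\uparrow\infty$ to remove the stopping. Your version is slightly more explicit than the paper's in two places the authors leave implicit: the seam analysis confirming that the hypotheses of Lemma~\ref{Lemma:gronwall} survive on $(\sigma_n,\tilde\tau_n]$ (where everything is frozen), and the observation that the finitely many indices $n<N(\omega)$ contribute only a finite quantity by continuity of each $y_n$; also you take $\tilde k_n=k_n\mathbf 1_{[0,\sigma_n]}$ where the paper takes $k_n(\cdot\wedge\sigma_n)$, an immaterial difference since $\tilde Q_n$ is constant past $\sigma_n$.
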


\begin{proof}
The conditions of Lemma \ref{Lemma:gronwall}
are satisfied by the processes
$$
y_n'(t)=y_n(t\wedge\sigma_n),\quad m_n'(t)=m_n(t\wedge\sigma_n),
\quad q_n'(t)=q_n(t\wedge\sigma_n),
$$
$$
k_n'(t)=k_n(t\wedge\sigma_n),
$$
in place of $y_n$, $m_n$, $q_n$ and $k_n$
and with  $\tau_n'=\inf\set{t\geq0:y_n'(t)\geq n^{-\delta}}$ in
place of $\tau_n$. Hence
\begin{equation}\label{ynprime}
\sup_{t\leq T}y_n'(t)=O(n^{-\kappa}),\quad\hbox{for each}~\kappa<\gamma.
\end{equation}
Define the set $\Omega_n:=[\sigma_n\geq T]$ and note
that since $\sigma_n\nearrow\infty$ almost surely,
the set $\Omega'=\cup_{n\geq1}\Omega_n$ has full
probability. It remains to prove that the random variable
$$\xi:=\sup_{m\geq1}\sup_{t\leq T}\frac{y_m(t)}{m^{-\kappa}}$$
is finite almost surely for all $\kappa<\gamma$.
Indeed, take $\omega\in\Omega'$.
Then $\omega\in\Omega_n$ for some $n(\omega)\geq1$, hence
$\sigma_m(\omega)\geq T$ for all $m\geq n(\omega)$ and, by \eqref{ynprime},
$$\sup_{t\leq T}y_m(t\wedge\sigma_m)=\sup_{t\leq T}y_m(t)\leq\zeta_\kappa m^{-\kappa},$$
for all $m\geq n(\omega)$. Since $\zeta_\kappa$ is finite almost surely, so it is $\xi$.
\end{proof}

\section{The growth of the approximations}                           \label{section growth}

In this section we estimate solutions $u_n$ of \eqref{eq for un} for large
$n$. We fix an integer $l\geq0$,
a constant $K\geq0$,
and make the following assumptions.

\begin{assumption}                                                               \label{assumption coefnondeg}
The derivatives in $x$ of the coefficients $a^{ij}_n$, $a^i_n$,
$a_n$, $b^k_n$ up to order $l+1$, and the derivatives in $x$ of $b^{ik}_n$
up to order $(l+1)\vee2$ are $\cP\times\cB(\bR^d)$-measurable real functions
on $\Omega\times[0,T]\times\bR^d$
and in magnitude are bounded by $K$, for all $i,j=1,...,d$, $k=1,...,d_1$,
and all $n\geq1$.
\end{assumption}

\begin{assumption}                                                                    \label{assumption f and g}
For each $\varepsilon>0$
almost surely
$$
|u_{n0}|_{l}=O(n^{\varepsilon}),
\quad \int_0^T\left(|f_n|^2_{l}+|g_n|^2_{l+1}\right)\,dt=O(n^{\varepsilon}),
\quad
\sup_{t\leq T}|g_{n}(t)|_{l}^2=O(n^{\varepsilon}).
$$
\end{assumption}

We will often use the notation
notation $f\cdot V(t)$ for the integral
$$
\int_0^tf(s)\,dV(s),
$$
when $V$ is a semimartingale
and $f$ is a predictable process such that the stochastic {integral of $f$ against
$dV$ over $[0,t]$ is well-defined. We define
$$
\eta_n(t)=\sup_{m\geq n}\sup_{s\leq t}\abs{W(s)-W_m(s)}.
$$
Notice that Assumption \ref{assumption 15.04.06} (i) clearly implies
that $\eta_n(T)=O(n^{-\kappa})$ almost surely for each $\kappa<\gamma$.

First we study the case when
$b^{ik}_n$, $b^k_n$ and $g^{k}_n$ do not depend on $t\in[0,T]$.

\begin{thm}                                                                     \label{theorem growth nondeg}
Assume that
$b^{ik}_n$, $b^k_n$ and $g^{k}_n$
do not depend on $t$ for
$i=1,...,d$, $k=1,...,d_1$ and $n\geq1$.
Let Assumptions \ref{assumption 15.04.06} $\mathrm{(i)}$ and $\mathrm{(iii)}$,
\ref{assumption coefnondeg},
\ref{assumption f and g} and
\ref{assumption 06.02.06} with $\lambda>0$ hold.
Then for every $\varepsilon>0$ almost surely
$$
\sup_{t\leq T}\abs{u_n(t)}_l^2+\int_0^T|u_n|^2_{l+1}\,dt=O(n^\varepsilon).
$$
\end{thm}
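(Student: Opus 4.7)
The plan is to derive an energy inequality for $y_n(t):=|u_n(t)|_l^2$ that fits the hypothesis of Lemma \ref{Lemma:gronwall}, so as to obtain $\sup_t y_n=O(n^\varepsilon)$ for every $\varepsilon>0$. By a standard approximation argument (mollifying $u_{n0},f_n,g_n$, applying Theorem \ref{theorem 7.22.1} to get extra regularity, and passing to the limit in the final estimate) we may assume $u_n$ enjoys enough regularity for the manipulations below to make sense. Since $W_n$ is of finite variation, the chain rule produces no It\^o correction and yields
\begin{equation*}
|u_n(t)|_l^2=|u_{n0}|_l^2+2\int_0^t(u_n,L_nu_n+f_n)_l\,ds+2\int_0^t(u_n,M_n^ku_n+g_n^k)_l\,dW_n^k(s).
\end{equation*}
Lemma \ref{Lemma:vLv} gives the coercive bound $2(u_n,L_nu_n)_l\le C|u_n|_l^2-\lambda|u_n|_{l+1}^2$, while Cauchy--Schwarz takes care of $(u_n,f_n)_l$. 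The entire difficulty is in the $dW_n^k$-integral, since $\|W_n\|(T)\to\infty$ rules out a direct Gronwall estimate.

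The crucial manoeuvre is to write $dW_n^k=dW^k+d(W_n^k-W^k)$. The $dW^k$-part is a local martingale $m_n$ whose quadratic variation density is at most $C|u_n|_l^4+C|u_n|_l^2|g_n|_l^2$ by Lemma \ref{Lemma:Gyongy}(i), matching the $(y_n^2+k_ny_n)\,dQ_n$ form in Lemma \ref{Lemma:gronwall} with $k_n=C|g_n|_l^2=O(n^\varepsilon)$. For the $d(W_n^k-W^k)$-part we integrate by parts:
\begin{equation*}
\int_0^t\Phi_n^k\,d(W_n^k-W^k)=\Phi_n^k(t)(W_n^k(t)-W^k(t))-\int_0^t(W_n^k-W^k)(s)\,d\Phi_n^k(s),
\end{equation*}
where $\Phi_n^k:=2(u_n,M_n^ku_n+g_n^k)_l$. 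The boundary term is of order $\eta_n(|u_n|_l^2+|u_n|_l|g_n^k|_l)$, and since $\eta_n\to0$ by Assumption \ref{assumption 15.04.06}(i) it can be absorbed on the left for $n$ large. Crucially, because $M_n^k,g_n^k$ are $t$-independent, expanding $d\Phi_n^k$ via \eqref{eq for un} produces precisely the symmetric bilinear pairs
\begin{equation*}
(L_nu_n,M_n^ku_n)_l+(u_n,M_n^kL_nu_n)_l,\qquad (M_n^ju_n,M_n^ku_n)_l+(u_n,M_n^kM_n^ju_n)_l,
\end{equation*}
controlled by $C|u_n|_{l+1}^2$ (Lemma \ref{lemma 1.10.1.12}) and $C|u_n|_l^2$ (Lemma \ref{Lemma:Gyongy}(ii)) respectively. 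Cross-terms involving $f_n,g_n$ are handled by Lemma \ref{Lemma:Gyongy}(i) and Cauchy--Schwarz. The $dt$-integrand inherits the small factor $(W_n^k-W^k)=O(\eta_n)$, so the potentially troublesome $\eta_n|u_n|_{l+1}^2$-piece is absorbed into $\tfrac{\lambda}{2}|u_n|_{l+1}^2$ for large $n$; and the $(W_n^k-W^k)\,dW_n^j$-contributions telescope into $-dB_n^{kj}$, whose total variation is $o(\ln n)$ by Assumption \ref{assumption 15.04.06}(iii).

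Collecting the bounds, for $n$ sufficiently large we reach
\begin{equation*}
y_n(t)+\tfrac{\lambda}{2}\int_0^t|u_n|_{l+1}^2\,ds\le q_n(t)+m_n(t)+\int_0^ty_n(s)\,dQ_n(s),
\end{equation*}
with $q_n(t)=O(n^\varepsilon)$ for every $\varepsilon>0$ (from Assumption \ref{assumption f and g}), $m_n$ the local martingale above, and $dQ_n=C\,dt+C\sum_{k,j}d\|B_n^{kj}\|$, so $Q_n(T)=o(\ln n)$. Applying Corollary \ref{Corollary:Gronwall stopped} with $\gamma$ chosen negative but sufficiently close to $0$ (and $\varepsilon$ in Assumption \ref{assumption f and g} chosen correspondingly small), using a stopping time that simultaneously localises the extra regularity of $u_n$ and the local martingale $m_n$, then yields $\sup_{t\le T}y_n(t)=O(n^\varepsilon)$ for every $\varepsilon>0$; plugging this back into the coercivity inequality delivers the $L^2_tH^{l+1}$-bound. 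The main obstacle is the bookkeeping of the cross-terms generated by $d\Phi_n^k$: each must either fit into one of the symmetric Sobolev bilinear identities of Section \ref{section 1.5.1.12} or inherit the smallness of $\eta_n$ or $d\|B_n^{kj}\|$. The secondary obstacle is the preliminary mollification needed to give $u_n$ enough regularity to justify all these manipulations.
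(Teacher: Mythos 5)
Your proposal follows essentially the same route as the paper: It\^o's formula in the Gelfand triple, the split $dW_n^k=dW^k+d(W_n^k-W^k)$, integration by parts on the finite-variation part so that the $dW_n^j$-contribution telescopes into a $dB_n^{kj}$-integral, the symmetric bilinear estimates of Lemmas \ref{Lemma:Gyongy}, \ref{lemma 1.10.1.12} and \ref{Lemma:vLv}, and the Gronwall-type Lemma \ref{Lemma:gronwall}/Corollary \ref{Corollary:Gronwall stopped} with both a stopping time $\sigma_n$ absorbing the $\eta_n$-small factors and a stopping time $\tau_n$ controlling $|u_n|_l^2$. The only (harmless) cosmetic differences are that you absorb the boundary term $\eta_n|u_n|_l^2$ into the left-hand side rather than into the $q_n$-part, and you take $y_n=|u_n|_l^2$ and recover the $L^2_tH^{l+1}$ bound a posteriori, whereas the paper bundles $\tfrac{\lambda}{2}\int_0^t|u_n|_{l+1}^2\,ds$ into $y_n$ from the outset.
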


\begin{proof}
Assume for the moment that $u_{n0}\in H^{l+1}$ almost surely. Recall that we are assuming that $W_n^k$ is of bounded variation, $n\geq1$, $k=1,\ldots,d_1$.
Then by Theorem \ref{theorem 7.22.1},
under Assumptions \ref{assumption coefnondeg},
\ref{assumption f and g} and  \ref{assumption 06.02.06} with $\lambda>0$ there
is a unique generalised solution  $u_n$ of \eqref{eq for un}-\eqref{ini}, and it is
an $H^{l+1}$-valued  weakly continuous
process such that almost surely
$$
\int_0^T|u_n(t)|_{l+2}^2\,dt<\infty.
$$
 In particular,
$$
(u_n(t),\varphi)_0=(u_{n0},\varphi)_0
$$
$$
+\int_0^t\big[-(a^{ij}_nD_iu_n,D_j\varphi)_0
+\big((a^i_n-D_ja_{n}^{ij})D_iu_n+a_nu_n+f_n,\varphi\big)_0\big]\,ds
$$
$$
+\int_0^t(b^{ik}_nD_iu_n+b^k_nu_n+g^k_n,\varphi)_0\,dW_n^k
$$
holds for all $t\in[0,T]$ and $\varphi\in C^{\infty}_0(\bR^d)$.
Substituting here $\sum_{|\alpha|\leq l}(-1)^{|\alpha|}D^{2\alpha}\varphi$
in place of $\varphi$ and integrating by parts, we
get
$$
(u_n(t),\varphi)_l=(u_{n0},\varphi)_l
$$
$$
+\int_0^t\big[-(a^{ij}_nD_iu_n,D_j\varphi)_l
+\big((a^i_n-D_ja_n^{ij})D_iu_n+a_n u_n+f_n,\varphi\big)_l\big]\,ds
$$
$$                                \\
+\int_0^t(b^{ik}_nD_iu+b^k_nu_n+g^k_n,\varphi)_l\,dW_n^k(s).
$$
Hence using It\^o's formula in the triple
$H^{m+1}\hookrightarrow  H^m\equiv(H^m)^{\ast}\hookrightarrow H^{m-1}$,
we have
$$
d|u_n|^2_l=2(u_n,L_nu_n+f_n)_l\,dt
+2(u_n,M^k_nu_n+g^k_n)_l\,dW_n^k =2(u_n,L_nu_n+f_n)_l\,dt
$$
$$
+2(u_n,M^k_nu_n+g^k_n)_l\,dW^k
     +2(u_n,M^k_nu_n+g^k_n)_l\,d(W_n^k-W^k).
$$
Hence, integrating by parts in the last term above we have
\begin{equation}                                                                               \label{6.10.1.12}
\abs{u_n}_l^2=\sum_{j=1}^6I_n^{(j)},
\end{equation}
where
\begin{align*}
    I^{(1)}_n &= \abs{u_{n0}}_l^2, \\
    I^{(2)}_n &= 2(u_n,L_nu_n+f_n)_l\cdot t, \\
    I^{(3)}_n &= 2(u_n,M^k_nu_n+g^k_n)_l\cdot W^k(t), \\
    I^{(4)}_n &= 2(u_n,M^k_nu_n+g^k_n)_l(W_n^k-W^k)\Big|_0^t, \\
    I^{(5)}_n &= 2\left(\{(L_nu_n+f_n,M^k_nu_n+g^k_n)_l
    +\<u_n,M^k_n(L_nu_n+f_n)\>_l\}(W^k-W_n^k)\right)\cdot t, \\
    I^{(6)}_n &= 2\left((M^j_nu_n+g^j,M^k_nu_n+g^k_n)_l
    +\<u_n,M^k_n(M^j_nu_n+g^j_n)\>_l\right)\cdot B_n^{kj}(t),
\end{align*}
and
$\<\cdot\,,\cdot\>_l$ is the duality product
between $H^{l+1}$ and $H^{l-1}$,
based
on the inner product $(\cdot\,,\cdot)_l$ in $H^l$.
Using Lemma \ref{Lemma:vLv} we obtain
$$
I^{(2)}_n\leq\left[C\abs{u_n}_l^2-\lambda|u_n|_{l+1}^2
+C|f_n|_{l-1}^2\right]\cdot t
$$
with a constant $C=C(K,l,d,\lambda)$.
The term $I^{(3)}_n$ is a continuous local martingale starting from $0$,
such that its quadratic variation,  $\<I^{(3)}_n\>$, satisfies, by Lemma \ref{Lemma:Gyongy} \textit{(i)}
\begin{equation}                                                                                  \label{1.10.1.12}
d\<I^{(3)}_n\>\leq C(|u_n|_l^4+|u_n|_l^2|g_n|_l^2)\,dt,
\end{equation}
and also by Lemma \ref{Lemma:Gyongy} \textit{(i)}
we have
$$
I^{(4)}_n(t)\leq C\sup_{0\leq s\leq t}\left(\abs{u_n(s)}_l^2
+|g_n(s)|_l^2\right)\eta_n(t)
$$
with a constant $C=C(K,d,d_1,l)$,
where $|g_n|_l^2=\sum_k|g^k_n|_l^2$.
Using Lemmas
\ref{Lemma:Gyongy} \textit{(ii)} and \ref{lemma 1.10.1.12},
we have
$$
I^{(5)}_n(t)\leq C\left[\abs{u_n}_{l+1}^2+|f_n|_l^2
+\abs{g_n}_{l+1}^2\right]\eta_n\cdot t,
$$
and
$$
I^{(6)}_n(t)\leq C\left[\abs{u_n}_l^2+|g_n|_{l}^2\right]\cdot \|B_n\|(t),
$$
with $\|B_n\|:=\sum_{j,k}\|B^{jk}_n\|$
and a constant $C=C(K,d,d_1,l)$.
Therefore, from \eqref{6.10.1.12},
\begin{multline}                                                                             \label{3.10.1.12}
    |u_n(t)|_l^2
    \leq \int_0^t(|u_n(s)|_l^2+|g_n|_{l}^2)\,dQ_n(s) +m_n(t)                                   \\
-\int_0^t\left(\lambda-C\eta_n(s)\right)|u_n(s)|_{l+1}^2\,ds
    +C\int_0^t(|f_n(s)|_l+|g_n|_{l+1}^2)(1+\eta_n(s))\,ds                                       \\
+|u_0|_l^2+C\sup_{0\leq s\leq t}
\left(\abs{u_n(s)}_l^2+|g_n(s)|_l^2\right)\eta_n(t),
\end{multline}
with $m_n=I^{(3)}_n$ and $Q_n(s)=C(s+\|B_n\|(s))$. Define
$$
\sigma_n:=\inf\set{t\geq0:2C\eta_n(t)\geq{\lambda}},
$$
and note that almost surely
\begin{align}
    y_n(t) &:=|u_n(t)|_l^2+ \frac{\lambda}{2}\int_0^t|u_n(s)|_{l+1}^2\,ds
    \leq\int_0^t|u_n(s)|_l^2\,dQ_n(s)+m_n(t)+q_n(t)                                          \nonumber\\
     &\leq \int_0^ty_n(s)\,dQ_n(s)+m_n(t)+q_n(t)
     \quad \text{for all $t\in[0,\sigma_n]$},                                            \label{1.3.3.12}
\end{align}
with $q_n=q^{(1)}_n+q^{(2)}_n$,
where
\begin{align*}
q^{(1)}_n(t)=&|u_{n0}|_l^2
+C\eta_n(t)\sup_{0\leq s\leq t}|g_n|_l^2                                                                           \\
&+C\int_0^t(1+\eta_n)(|f_n(s)|_l^2+|g_n|_{l+1}^2)\,ds
+C\int_0^t|g_n|_{l}^2\,d\|B_n\|(s),
\\ q_n^{(2)}(t)=&C\eta_n(t)\sup_{0\leq s\leq t}|u_n(s)|_l^2.
\end{align*}
Due to Assumptions \ref{assumption 15.04.06} $\mathrm{(i)}\&\mathrm{(iii)}$
and
\ref{assumption f and g} we have
$$
\sup_{t\leq T}|q^{(1)}_n(t)|=O(n^{\varepsilon})
\quad \text{almost surely for each $\varepsilon>0$}.
$$
For a given $\kappa\in(0,\gamma)$ and any $\varepsilon\in (0,\kappa)$
take $\bar{\varepsilon}\in(\varepsilon,\kappa)$ and define
$$
\tau_n=\inf\{t\geq0: |u_n(t)|_l^2\geq n^{\bar{\varepsilon}}\}.
$$
Then clearly
$$
\sup_{t\leq \tau_n}|q^{(2)}_n(t)|=O(n^{\varepsilon})
\quad \text{a.s. for each $\varepsilon>0$}.
$$
Thus
$$
\sup_{t\leq \tau_n}|q_n(t)|=O(n^{\varepsilon})
\quad \text{a.s. for each $\varepsilon>0$}.
$$
Now taking into account \eqref{1.10.1.12} and
noting that $\sigma_n\nearrow\infty$, we finish
the proof of the theorem by applying
Corollary \ref{Corollary:Gronwall stopped}
to \eqref{1.3.3.12}.
Since our estimates do not depend on the norm
of $u_{n0}$ in $H^{l+1}$ but on its norm in $H^l$,
by a standard approximation argument we can relax the assumption
that $u_{n0}$ is almost surely in $H^{l+1}$.
\end{proof}

\smallskip
In the case when $b^{ik}_n$, $b^k_n$ and $g^k_n$ depend
on $t$ we make the following assumption.

\begin{assumption}                                                        \label{assumption 5.10.1.12}
For each $n\geq1$, $i=1,2,...,d$ and $k=1,...,d_1$
there exist real-valued $\cP\times\cB(\bR^d)$-measurable
functions $b^{ik(r)}_n$, $b^{k(r)}_n$ on
$\Omega\times[0,T]\times\bR^d$ and
$H^0$-valued predictable processes $g^{k(r)}_n$
for $r=0,1,...,d_1$,
such that
almost surely
$$
d(b^{ik}_n(t),\varphi)=(b^{ik(0)}_n(t),\varphi)\,dt
+(b^{ik(p)}_n(t),\varphi)\,dW^{p}_n(t),
$$
$$
d(b^{k}_n(t),\varphi)=(b^{k(0)}_n(t),\varphi)\,dt
+(b^{k(p)}_n(t),\varphi)\,dW^{p}_n(t),
$$
$$
d(g^k_n(t),\varphi)=(g^{k(0)}_n(t),\varphi)\,dt
+(g^{k(p)}_n(t),\varphi)\,dW^{p}_n(t),
$$
for all $i=1,....,d$, $k=1,...,d_1$, every $n\geq1$
and $\varphi\in C^{\infty}_0(\bR^d)$.
The functions $b^{ik(r)}_n$ together with their derivatives in $x$
up to order $l\vee1$ and  the functions $b^{k(r)}_n$ together
with their derivatives in $x$ up to order $l$ are
$\cP\times\cB(\bR^d)$-measurable functions, bounded by $K$ for
all $n\geq1$ and $r=0,1,...,d_1$.
Moreover, for each $\varepsilon>0$
$$
\int_0^T|g^{k(0)}_n|_{l-1}^2\,dt=O(n^{\varepsilon}),
\quad
\sup_{t\leq T}\sum_{p=1}^{d}|g^{k(p)}_n|_{l}^2
=O(n^{\varepsilon})\quad\text{for $k=1,...,d_1$.}
$$
\end{assumption}

\begin{thm}                                                       \label{theorem 1.10.1.12}
Let the assumptions of Theorem \ref{theorem growth nondeg}
together with Assumption \ref{assumption 5.10.1.12} hold.
Then we have the conclusion of Theorem \ref{theorem growth nondeg}.
\end{thm}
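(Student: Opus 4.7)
The strategy is to mimic the proof of Theorem~\ref{theorem growth nondeg} essentially line by line, tracking the extra terms that the time-dependence of $b_n^{ik}$, $b_n^k$, $g_n^k$ introduces via Assumption~\ref{assumption 5.10.1.12}. After reducing, as before, to the case where $u_{n0}\in H^{l+1}$ so that Theorem~\ref{theorem 7.22.1} gives a solution $u_n$ with enough regularity for the Sobolev manipulations, I would apply It\^o's formula to $|u_n|_l^2$ to obtain
\[
d|u_n|_l^2=2(u_n,L_nu_n+f_n)_l\,dt+2(u_n,M_n^ku_n+g_n^k)_l\,dW_n^k,
\]
split $dW_n^k=dW^k+d(W_n^k-W^k)$, and handle the last part by integration by parts. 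The only genuinely new ingredient is the computation of the differential of $X_s^k:=2(u_n(s),M_n^k(s)u_n(s)+g_n^k(s))_l$: using the product rule together with Assumption~\ref{assumption 5.10.1.12} (and the fact that $W_n$ has finite variation, so no It\^o correction enters), one finds
\[
dX_s^k=\alpha_s^k\,ds+\beta_s^{kp}\,dW_n^p(s),
\]
where $\alpha_s^k$ equals the expression appearing in $I_n^{(5)}$ of the previous proof \emph{plus} the new contribution $2(u_n,M_n^{k(0)}u_n+g_n^{k(0)})_l$, and $\beta_s^{kp}$ equals the expression in $I_n^{(6)}$ \emph{plus} $2(u_n,M_n^{k(p)}u_n+g_n^{k(p)})_l$, with $M_n^{k(r)}:=b_n^{ik(r)}D_i+b_n^{k(r)}$.

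The integration by parts produces no quadratic-covariation term, because $[X^k,W^k]=0$: all the driving noise in $u_n$, $M_n^k$, $g_n^k$ is $W_n$, which has zero covariation with $W$. The $\alpha^k$-contribution is controlled using $|W_n^k-W^k|\leq\eta_n$, and the $\beta^{kp}$-contribution, after the identity $(W^k-W_n^k)\,dW_n^p=dB_n^{kp}$, becomes $\int\beta_s^{kp}\,dB_n^{kp}$, controlled by $\|B_n\|$. The old pieces of $\alpha^k,\beta^{kp}$ are estimated by Lemmas~\ref{Lemma:Gyongy}, \ref{lemma 1.10.1.12} and \ref{Lemma:vLv} exactly as in the proof of Theorem~\ref{theorem growth nondeg}. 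The new pieces are estimated by Lemma~\ref{Lemma:Gyongy}(i) applied to the first-order operators $M_n^{k(r)}$, whose coefficients have precisely the regularity that Lemma~\ref{Lemma:Gyongy}(i) requires by Assumption~\ref{assumption 5.10.1.12}, plus direct inner-product bounds for the free terms $g_n^{k(r)}$.

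The delicate point, and the one step that I expect to require real care, is the $g_n^{k(0)}$ term: Assumption~\ref{assumption 5.10.1.12} provides only $\int_0^T|g_n^{k(0)}|_{l-1}^2\,dt=O(n^\varepsilon)$, which is weaker than what the corresponding ``old'' free terms enjoy. I would interpret $(u_n,g_n^{k(0)})_l$ as the duality pairing $\langle u_n,g_n^{k(0)}\rangle_l$, bound it by $|u_n|_{l+1}|g_n^{k(0)}|_{l-1}$, and then, after multiplying by $\eta_n$ and integrating, use Cauchy--Schwarz and Young's inequality with a small parameter so that the $|u_n|_{l+1}^2$ factor is absorbed into the $-\lambda\int|u_n|_{l+1}^2\,ds$ coming from Lemma~\ref{Lemma:vLv}, leaving an $O(n^{\varepsilon-2\kappa})$ remainder. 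After collecting everything, the resulting differential inequality has the same shape as \eqref{1.3.3.12} with the same process $Q_n(s)=C(s+\|B_n\|(s))$, the same martingale bound \eqref{1.10.1.12}, and an analogous $q_n=O(n^\varepsilon)$ on $[0,\tau_n\wedge\sigma_n]$, where $\sigma_n=\inf\{t:2C\eta_n(t)\geq\lambda\}$ and $\tau_n=\inf\{t:|u_n(t)|_l^2\geq n^{\bar\varepsilon}\}$ for $\bar\varepsilon\in(\varepsilon,\kappa)$ as before. Corollary~\ref{Corollary:Gronwall stopped}, applied in the case $\gamma\leq 0$ of Lemma~\ref{Lemma:gronwall} with $\gamma=-\varepsilon/2$, then yields the desired $O(n^\varepsilon)$ bound, and the initial regularity of $u_{n0}$ is relaxed by the same approximation argument as at the end of the proof of Theorem~\ref{theorem growth nondeg}.
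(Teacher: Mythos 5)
Your proposal is correct and takes essentially the same approach as the paper: you derive precisely the paper's extra term $I_n^{(7)}$ by differentiating $X^k_s=2(u_n,M^k_nu_n+g^k_n)_l$ with the product rule and Assumption~\ref{assumption 5.10.1.12}, you observe correctly that no quadratic covariation arises because $u_n$ (hence $X^k$) is of finite variation, and you handle the weakly bounded $g_n^{k(0)}$ exactly as the paper does, via the duality pairing $2(u_n,g_n^{k(0)})_l\leq|u_n|_{l+1}^2+|g_n^{k(0)}|_{l-1}^2$ and absorption of $\eta_n|u_n|_{l+1}^2$ into the good $-\lambda\int|u_n|_{l+1}^2$ term on $[0,\sigma_n]$ before invoking the stopped Gronwall lemma.
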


\begin{proof}
We can follow the proof of the previous theorem with minor changes.
We need only add an additional term,
$$
I^{(7)}_n=2\{(W^k-W_n^k)(u_n, M^{k(0)}_nu_n+g^{k(0)})_l\}\cdot t
+2(u_n, M^{k(p)}_nu_n+g^{k(p)})_l\cdot B^{kp}_n(t),
$$
to the right-hand side \eqref{6.10.1.12},
where for each $n\geq1$,
$$
M^{k(r)}_n=b^{ik(r)}_nD_i+b^{k(r)}_n,
$$
for $k=1,...,d_1$ and $r=0,..,d_1$. Clearly,
$2(u_n, g^{k(0)})_l\leq |u_n|^2_{l+1}+|g^{k}|^2_{l-1}$, and hence
$$
|2(W^k-W_n^k)(u_n, M^{k(0)}_nu_n+g^{k(0)})_l|         \\
$$
$$
\leq
\eta_n(|u_n|_{l+1}^2+|g^{(0)}_n|^2_{l-1}+C|u_n|^2_l),                                          \\
$$
and, by Lemma \ref{Lemma:Gyongy} \textit{(i)},
$$
2|(u_n, M^{k(p)}_nu_n+g^{k(p)})_l|
\leq C|u_n|^2_l+|g^{k(p)}|^2_l
$$
with a constant $C=C(K,d,d_1,l)$,
where $|g^{(0)}_n|^2_{l-1}=\sum_{k=1}^{d_1} |g^{k(0)}_n|^2_{l-1}$.
Thus  inequality \eqref{3.10.1.12}
holds with the additional term
$$
q^{(3)}_n(t)=\eta_n(t)\int_0^t|g^{(0)}_n(s)|^2_{l-1} \,ds
+\int_0^t |g^{k(p)}(s)|^2_l\,d\|B^{kp}\|(s)
$$
added to its left-hand side and
with a constant $C=C(\lambda,K,d_1,l)$.
Since due to Assumptions \ref{assumption 5.10.1.12}
and \ref{assumption 15.04.06} (iii),
for each $\varepsilon>0$ we have
$$
\sup_{t\leq T}q^{(3)}_n(t)=O(n^{\varepsilon})\quad \text{almost surely},
$$
we can finish the proof as in the proof of Theorem \ref{theorem growth nondeg}.
\end{proof}

Let us consider now the degenerate case,
$\lambda=0$ in Assumption \ref{assumption 06.02.06}.

\begin{assumption}                                                                               \label{assumption coefdeg}
For each $n\geq1$ there exist real-valued functions $\sigma_n^{ip}$
on $\Omega\times H_T$ for $p=1,2,...,d_2$
such that $a^{ij}_n=\sigma_n^{ip}\sigma_n^{jp}$
for all $i,j=1,\ldots,d$.
For all $n\geq1$ the functions  $\sigma_n^{ip}$
and $b^i_n$ and their derivatives in $x\in\bR^d$
up to order $(l+2)\vee3$, the functions $a^i_n$, $a_n$, $b_n$ and their
derivatives in $x$ up to order $(l+1)\vee2$ are $\cP\times\cB(\bR^d)$-measurable
functions, bounded by $K$, for all $i=1,...,d$ and $p=1,...,d_2$.
\end{assumption}

\begin{assumption}                                                      \label{assumption 1.19.1.12}
Let Assumption \ref{assumption
5.10.1.12} hold  and assume that for each $\varepsilon>0$
$$
\int_0^T|g_n^{k(0)}(t)|_l^2\,dt=O(n^{\varepsilon})
\quad
\text{almost surely for each $k=1,2,...,d_1$. }
$$
\end{assumption}

\begin{thm}                                                            \label{theorem growth deg}
Let Assumptions \ref{assumption 15.04.06}
$\mathrm{(i)}\&\mathrm{(iii)}$,
 \ref{assumption 06.02.06} (with $\lambda=0$),
 \ref{assumption f and g},
\ref{assumption coefdeg} and \ref{assumption 1.19.1.12}
hold.
Then
$$
\sup_{t\leq T}\abs{u_n(t)}_l^2
+\sum_{r=1}^{d_2}\int_0^T\abs{N^r_nu_n}_l^2\,ds=O(n^\varepsilon),
\quad\text{for every $\varepsilon>0$ almost surely},
$$
where $N^r_n=\sigma^{ir}_nD_i$,  for $r=1,\ldots,d_2$.
\end{thm}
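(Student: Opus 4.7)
The plan is to parallel the argument of Theorem \ref{theorem growth nondeg} and its $t$-dependent extension Theorem \ref{theorem 1.10.1.12}, replacing Lemma \ref{Lemma:vLv} (which supplied ellipticity $-\lambda|u_n|_{l+1}^2$) by the subelliptic Lemma \ref{lemma 19.2.1.12}, and replacing Lemma \ref{lemma 1.10.1.12} by Lemma \ref{Lemma:ML} for the critical $M_n^k L_n$ cross-term. By a standard mollification of $u_{n0}$ we may first assume $u_{n0}\in H^{l+1}$ a.s.; Theorem \ref{theorem 1.6.1.12} then provides a unique $H^{l+1}$-valued weakly continuous solution $u_n$ with $\int_0^T|u_n|_{l+2}^2\,dt<\infty$, the enhanced smoothness of the $\sigma_n^{ip}$ and $b_n^i$ in Assumption \ref{assumption coefdeg} being exactly what is needed for that theorem.

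I would then apply It\^o's formula to $|u_n(t)|_l^2$ in the triple $H^{l+1}\hookrightarrow H^l\hookrightarrow H^{l-1}$, integrate by parts the $d(W_n^k-W^k)$-differential in the stochastic term, and include the extra $I_n^{(7)}$ arising from the $t$-dependence of $M_n^k$ and $g_n^k$ as in Theorem \ref{theorem 1.10.1.12}, to obtain $|u_n(t)|_l^2=\sum_{j=1}^7 I_n^{(j)}(t)$. For $I_n^{(2)}$ the first inequality of Lemma \ref{lemma 19.2.1.12} yields $I_n^{(2)}\leq\int_0^t\bigl(-2\sum_r|N_n^r u_n|_l^2+C|u_n|_l^2+C|f_n|_l^2\bigr)\,ds$. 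The terms $I_n^{(3)},I_n^{(4)},I_n^{(6)}$ are handled verbatim as in Theorem \ref{theorem growth nondeg} using Lemma \ref{Lemma:Gyongy}. The critical term $I_n^{(5)}$ is estimated piece by piece: the cross $(L_n u_n, M_n^k u_n)_l+\langle u_n, M_n^k L_n u_n\rangle_l$ is bounded by $C\sum_r|N_n^r u_n|_l^2+C|u_n|_l^2$ via Lemma \ref{Lemma:ML}; the term $(L_n u_n, g_n^k)_l$ is bounded by $\sum_r|N_n^r u_n|_l^2+C(|u_n|_l^2+|g_n|_{l+1}^2)$ via the second inequality of Lemma \ref{lemma 19.2.1.12}; and the symmetric pair $(f_n,M_n^k u_n)_l+\langle u_n, M_n^k f_n\rangle_l$ reduces, after integration by parts in the top-order derivative, to $C(|u_n|_l^2+|f_n|_l^2)$ because $M_n^k+(M_n^k)^{\ast}$ is a zeroth-order operator. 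Summing these, $|I_n^{(5)}|\leq C\eta_n\int_0^t\bigl(\sum_r|N_n^r u_n|_l^2+|u_n|_l^2+|f_n|_l^2+|g_n|_{l+1}^2\bigr)\,ds$. The additional term $I_n^{(7)}$ is treated exactly as in Theorem \ref{theorem 1.10.1.12} using Assumption \ref{assumption 1.19.1.12}.

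Now set $y_n(t):=|u_n(t)|_l^2+\sum_{r=1}^{d_2}\int_0^t|N_n^r u_n(s)|_l^2\,ds$ and restrict to $t\leq\sigma_n:=\inf\{t:2C\eta_n(t)\geq 1\}$; by Assumption \ref{assumption 15.04.06}(i), $\sigma_n\nearrow\infty$ a.s. On this interval the $C\eta_n\sum_r|N_n^r u_n|_l^2$ contribution of $I_n^{(5)}$ is dominated by half the coercivity $-\sum_r|N_n^r u_n|_l^2$ from $I_n^{(2)}$ and is absorbed. Introducing the further stopping time $\tau_n:=\inf\{t:|u_n(t)|_l^2\geq n^{\bar\varepsilon}\}$ with $\bar\varepsilon>\varepsilon$, the collected estimates take the form
$$y_n(t)\leq \int_0^t y_n(s)\,dQ_n(s)+m_n(t)+q_n(t),\qquad t\leq \tau_n\wedge\sigma_n,$$
with $dQ_n\leq C(ds+d\|B_n\|)$, $Q_n(T)=o(\ln n)$ a.s.\ by Assumption \ref{assumption 15.04.06}(iii), $m_n=I_n^{(3)}$ satisfying the required quadratic-variation hypothesis, and $\sup_{t\leq\tau_n\wedge\sigma_n}|q_n|=O(n^\varepsilon)$ thanks to the polynomial bounds in Assumption \ref{assumption f and g} and Assumption \ref{assumption 1.19.1.12}. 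Corollary \ref{Corollary:Gronwall stopped} then yields $\sup_{t\leq T}y_n(t)=O(n^\varepsilon)$ a.s., which is the desired conclusion. Since all the constants depend only on $|u_{n0}|_l$ and not on $|u_{n0}|_{l+1}$, the initial smoothness hypothesis is removed by approximation.

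The main obstacle is the handling of $I_n^{(5)}$. Without ellipticity each of $(D^\alpha M_n^k L_n u_n, D^\alpha u_n)$ and $(D^\alpha L_n u_n, D^\alpha M_n^k u_n)$ requires one more derivative than we control on $u_n$, so neither admits an individual bound; the whole proof relies on the fact that their \emph{sum} is controlled by precisely $C\sum_r|N_n^r u_n|_l^2+C|u_n|_l^2$ (Lemma \ref{Lemma:ML}), i.e.\ by the subelliptic quantity that we already carry in $y_n$. An analogous symmetric cancellation in $(f_n, M_n^k u_n)_l+\langle u_n, M_n^k f_n\rangle_l$ is what lets us avoid needing $|f_n|_{l+1}$, which is not provided by the hypotheses.
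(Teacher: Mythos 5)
Your proposal is correct and mirrors the paper's argument: apply It\^o's formula to $|u_n|_l^2$ to obtain the decomposition $\sum_{j=1}^7 I_n^{(j)}$, estimate $I_n^{(2)}+I_n^{(5)}$ together via Lemmas \ref{lemma 19.2.1.12} and \ref{Lemma:ML} so that the subelliptic coercivity $-\sum_r|N_n^ru_n|_l^2$ absorbs the $\eta_n$-weighted contributions on $[0,\sigma_n]$, and close with Corollary \ref{Corollary:Gronwall stopped} applied to $y_n(t)=|u_n(t)|_l^2+\sum_r\int_0^t|N_n^ru_n|_l^2\,ds$. The only small imprecision is describing $I_n^{(7)}$ as handled \emph{exactly} as in Theorem \ref{theorem 1.10.1.12}: there $2(u_n,g_n^{k(0)})_l$ was split as $|u_n|_{l+1}^2+|g_n^{k(0)}|_{l-1}^2$ and the first piece absorbed into the elliptic gain, which is unavailable here, so one must instead bound it by $C(|u_n|_l^2+|g_n^{k(0)}|_l^2)$ -- but since you invoke Assumption \ref{assumption 1.19.1.12}, which provides precisely the required $\int_0^T|g_n^{k(0)}|_l^2\,dt=O(n^\varepsilon)$ control, your argument goes through.
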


\begin{proof}
The proof follows the lines of that of Theorems \ref{theorem growth nondeg}
and
\ref{theorem 1.10.1.12},
but instead of estimating $I^{(2)}_n$ and $I^{(5)}_n$
separately, we estimate their sum as follows. Note that
\begin{align*}
   d I^{(5)}_n
    = &2\{(L_nu_n+f_n,M^k_nu_n+g^k_n)_l+\<u_n,M^k_n(L_nu_n+f_n)\>_l\}
    (W^k-W_n^k)\,dt \\
     =& 2\{(L_nu_n,M^k_nu_n)_l+\<u_n,M^k_nL_nu_n\>_l
     +(L_nu_n,g^k_n)_l\} (W^k-W_n^k)\,dt \\
     &+2\{(f_n,M^k_nu_n)_l +(u_n,M_n^kf_n)_l\}(W^k-W_n^k)\,dt \\
     &+2(f_n,g_n^k)_l (W^k-W_n^k)\,dt. \\
    \end{align*}
 Using Lemmas \ref{Lemma:ML}, \ref{lemma 19.2.1.12},
 \ref{Lemma:Gyongy} \textit{(i)} and \textit{(ii)}, we obtain
$$
 |(L_nu_n,M^k_nu_n)_l+\<u_n,M^k_nL_nu_n\>_l|
 \leq C\sum_{r=1}^{d_2}|N^r_nu_n|_l^2
 +C|u_n|^2_l,
 $$
$$
(u_n,L_nu_n+f_n)_l
\leq -\sum_{r=1}^{d_2}|N^r_nu_n|_l^2+C|u_n|^2_l+|f_n|^2_l,
$$
$$
|(f_n,M^k_nu_n)_l+(u_n,M^k_nf_n)_l|
\leq C\left(|u_n|_l^2+|f_n|_l^2\right),
$$
$$
|(L_nu_n,g^k_n)_l|\leq \sum_{r=1}^{d_2}|N^r_nu_n|^2+C(|u_n|^2_l+|g^k_n|^2_{l+1})
$$
with a constant $C=C(K,l,d,d_2)$. Hence
$$
 d I^{(5)}_n(t)
  \leq C\eta_n(t)\left(\sum_{r=1}^{d_2}\abs{N^r_nu_n}_l^2
     +|g_n|_{l+1}^2+|f_n|_l^2+|u_n|_l^2\right)\,dt,
$$
and recalling that $I^{(2)}_n(t)=2(u_n,L_nu_n+f_n)_l\cdot t$, we get
\begin{align*}
I^{(2)}_n(t)+I^{(5)}_n(t)
\leq& C\set{(\eta_n+1)(|u_n|_l^2+|f_n|_l^2)
+\eta_n\left|g_n|_{l+1}^2\right)}\cdot t                                          \\
&+\{(C\eta_n-2)\sum_{r=1}^{d_2}|N^r_nu_n|_l^2\}\cdot t.
\end{align*}
To estimate $I^{(7)}_n$ we use that, by Lemma \ref{Lemma:Gyongy}\textit{(i)},
$$
|2(W^k-W_n^k)\{(u_n, M^{k(0)}_nu_n+g^{k(0)}_n)_l\}|
\leq C\eta_n(|u_n|^2_l +|g^{(0)}_n|^2_l)                                         \\
$$
with a constant $C$ and $|g_n^{(0)}|^2_l=\sum_k|g_n^{k(0)}|^2_l$.
Thus using the estimates for $I^{(1)}_n$,
$I^{(3)}_n$,
$I^{(4)}_n$  and $I^{(6)}_n$ given in the proof of
Theorem \ref{theorem growth nondeg}, and defining
$$
\sigma_n=\inf\set{t\geq0: C\eta_n\geq1},
$$
and
$$
y_n(t)=\abs{u_n(t)}_l^2+\sum_{r=1}^{d_2}\int_0^t\abs{K_ru_n}_l^2\,ds,
$$
we get
$$
 y_n(t)\leq \int_0^ty_n(s)\,dQ_n(s)+m_n(t)+q_n(t)
 \quad
 \text{almost surely for all $t\in[0,\sigma_n]$},
$$
with
$$
Q_n(s)=C\{(\eta_n+1)s+\|B_n\|(s)+\eta_n\},
\quad
m_n=I^{(3)}_n,
\quad
q_n=q^{(1)}_n+q^{(2)}_n+\bar q^{(3)}_n,
$$
$$
\bar q_n^{(3)}:=\int_0^t|g^{(0)}_n(s)|^2_{l}\sup_{r\leq t}|W(r)-W_n(r)| \,ds
+\int_0^t |g^{k(p)}_n(s)|^2_l\,d\|B^{kp}_n\|(s),
$$
where $\|B_n\|(s)=\sum_{k,p}\|B^{kp}_n\|(s)$, $q^{(1)}_n$ and $q^{(2)}_n$
are defined in the proof Theorem \ref{theorem growth nondeg}, and $C$ is a constant
depending only on $K$, $d$, $d_1$, $d_2$ and $l$. Hence the proof
is the same as that of Theorem \ref{theorem growth nondeg}.

\end{proof}

\section{Rate of convergence results for SPDEs}                                          \label{section rate}

Here we present two theorems on rate of convergence which provide us with
a technical tool to prove our main results.
Consider  for each integer $n\geq1$ the problem
\begin{align}
du_n(t,x)=&(\cL_nu_n(t,x)+f_n(t,x))\,dt +(\cM_n^ku_n(t,x)
+g_n^k(t,x))\,dW^k(t)                                                              \nonumber\\
&+(\cN_n^{\rho}u_n(t,x)+h_n^{\rho}(t,x))\,dB^{\rho}_n(t),
\quad (t,x)\in H_T, \,\                                                                          \label{7.22.1 with n}\\
u_n(0,x)=&u_{n0}(x)\quad x\in\bR^d,                                                  \label{1.2.1.12}
\end{align}
 where $B_n=(B_n^{\rho})$ is an
 $\bR^{d_2}$-valued continuous adapted process of finite variation on $[0,T]$.
 The operators $\cL_n$, $\cM_n^k$ and $\cN^{\rho}_n$ are of the form
$$
\cL_n=\fra_n^{ij}(t,x)D_{ij}+\fra_n^i(t,x)D_i+\fra_n(t,x),
$$
\begin{equation}                                                                      \label{form of L and M}
\cM_n^k=\frb_{n}^{ik}(t,x)D_i+\frb_{n}^{k}(t,x),
\quad \cN_n^{\rho}=\frc_{n}^{i\rho}(t,x)D_i+\frc_{n}^{\rho}(t,x)
\end{equation}
where $\fra^{ij}_n$, $\fra^{i}_n$, $\fra_n$, $\frb^{ik}_n$,  $\frb^{k}_n$,
$\frc^{i\rho}_n$ and $\frc^{\rho}_n$ are $\cP\otimes\cB(\bR^d)$-measurable
real functions on $\Omega\times H_T$
for $i,j=1,...,d$, $k=1,...,d_1$, $\rho=1,...,d_2$ and $n\geq1$. For each $n$
the initial value $u_{n0}$ is an $H^0$-valued
$\cF_{0}$-measurable random variable,
$f_n$ is an $H^{-1}$-valued predictable process and  $g^k_n$
and $h^\rho_n$ are $H^0$-valued predictable processes for $k=1,...,d_1$
and $\rho=1,...,d_2$.
We use the notation $|g_n|^2_r=\sum_k|g_n^k|^2_r$
and $|h_n|^2_r=\sum_{\rho}|h_n^{\rho}|_r^2$ for $r\geq0$.

 Let $l\geq0$ be an integer and let $K\geq0$, $\gamma>0$ be fixed constants.

 We assume the {\it stochastic parabolicity condition}.
 \begin{assumption}                                                                                             \label{assumption sp}
 There is a constant $\lambda\geq0$ such that for all $n\geq1$,
 $dP\times dt\times dx$ almost all $(\omega,t,x)\in\Omega\times H_T$
we have
 \begin{equation*}
 (\fra^{ij}_n-\tfrac{1}{2}\frb^{ik}_n\frb^{jk}_n)z^iz^j\geq \lambda|z|^2
 \quad\text{for all $z=(z^1,...,z^d)\in\bR^d$}.
 \end{equation*}
 \end{assumption}
In the case when $\lambda>0$ we will use the
 the following conditions.

 \begin{assumption}                                                             \label{assumption 1.2.1.12}
 The coefficients $\fra^{ij}_n$,
$\frb^{ik}_n$, $\frc^{i\rho}_n$ and their derivatives in $x$
 up to order $l\vee1$
 are bounded
 in magnitude by $K$ for all $i,j=1,...,d$, $k=1,...,d_1$,
$\rho=1,...,d_2$ and $n\geq1$.
  The coefficients $\fra^{i}_n$, $\fra_n$,  $\frb^{k}_n$,
$\frc^{\rho}_n$ and their derivatives in $x$ up to order $l$
are bounded in magnitude
by $K$ for all $i=1,...,d$, $k=1,...,d_1$, $\rho=1,...,d_2$ and $n\geq1$.
 \end{assumption}

\begin{assumption}                                                   \label{assumption free terms rate}
We have $\abs{u_{n0}}_l=O(n^{-\gamma})$,
$$
\int_0^T|f_n(s)|^2_{l-1}\,ds=O(n^{-2\gamma}), \quad
\int_0^T|g_n(s)|_l^2\,ds=O(n^{-2\gamma}),
$$
$$
\int_0^T|h_n^{\rho}(t)|^2_{l}d\|B^{\rho}\|(t)=O(n^{-2\gamma}), \quad
\sum_{\rho=1}^{d_2}\|B^{\rho}_n\|(T)=o(\ln n).
$$
\end{assumption}

Let $u_n$ be a generalised solution of
\eqref{7.22.1 with n}-\eqref{1.2.1.12}
in the sense of Definition \ref{Definition 2.3.3.12},
such that $u_n$ is an $H^l$-valued weakly continuous process,
$u_n(t)\in H^{l+1}$ for $P\times dt$-almost every
$(\omega,t)\in\Omega\times[0,T]$, and
almost surely
$$
\int_0^T|u_n(t)|^2_{l+1}\,dt<\infty.
$$
Then for $n\to \infty$ we have the following result.

\begin{thm}                                                                                \label{theorem:decay}
Let Assumptions \ref{assumption 1.2.1.12},
\ref{assumption free terms rate} and
\ref{assumption sp} with $\lambda>0$ hold.
Then
\begin{equation}                                                                              \label{2.3.1.12}
\sup_{t\leq T}|u_n(t)|_{l}^2+\int_0^T|u_n(t)|^2_{l+1}\,dt
=O(n^{-2\kappa})\quad
\text{a.s. for $\kappa<\gamma$}.
\end{equation}
\end{thm}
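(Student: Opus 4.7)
The plan is to derive a scalar semimartingale inequality for $|u_n(t)|_l^2$ and invoke the Gronwall-type Lemma~\ref{Lemma:gronwall}, following the template of the proof of Theorem~\ref{theorem growth nondeg} but now extracting an explicit decay rate from the assumed decay of the data via the stochastic parabolicity. Since $u_n$ is a generalised solution with $u_n(t)\in H^{l+1}$ for $P\times dt$-almost every $(\omega,t)$, Itô's formula applied in the triple $H^{l+1}\hookrightarrow H^l\hookrightarrow H^{l-1}$ produces
\[
|u_n(t)|_l^2
=|u_{n0}|_l^2+\int_0^t\!\bigl(2\langle u_n,\cL_nu_n+f_n\rangle_l+|\cM_n^ku_n+g_n^k|_l^2\bigr)\,ds+m_n(t)+2\!\int_0^t\!(u_n,\cN_n^\rho u_n+h_n^\rho)_l\,dB_n^\rho,
\]
where $m_n(t):=2\int_0^t(u_n,\cM_n^ku_n+g_n^k)_l\,dW^k$ is a continuous local martingale.

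The crucial step is to combine the $\cL_n$-contribution with the It\^o correction $|\cM_n^ku_n|_l^2$ via Assumption~\ref{assumption sp}. A standard commutator argument --- apply $D^\alpha$ for $|\alpha|\le l$, integrate by parts, and use Lemma~\ref{Lemma:Gyongy} to dispose of lower-order remainders --- yields the $L_2$-energy bound
\[
2\langle u_n,\cL_nu_n\rangle_l+|\cM_n^ku_n|_l^2\le -2\lambda|u_n|_{l+1}^2+C|u_n|_l^2,
\]
since after integration by parts the principal symbol at level $l+1$ becomes $-(2\fra_n^{ij}-\frb_n^{ik}\frb_n^{jk})(D_iD^\alpha u_n,D_jD^\alpha u_n)_0\le -2\lambda|u_n|_{l+1}^2+C|u_n|_l^2$. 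The mixed terms $2\langle u_n,f_n\rangle_l$ and $2(\cM_n^ku_n,g_n^k)_l$ are absorbed by Cauchy--Schwarz and Young's inequality into $\lambda|u_n|_{l+1}^2+C(|f_n|_{l-1}^2+|g_n|_l^2)$, leaving an effective coercivity $-\lambda|u_n|_{l+1}^2$; the $B_n^\rho$-integrand is controlled by $C|u_n|_l^2+|h_n^\rho|_l^2$ using Lemma~\ref{Lemma:Gyongy}(i).

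Setting $y_n(t):=|u_n(t)|_l^2+\lambda\int_0^t|u_n|_{l+1}^2\,ds$, $Q_n(t):=C\bigl(t+\sum_\rho\|B_n^\rho\|(t)\bigr)$, and
\[
q_n(t):=|u_{n0}|_l^2+C\!\int_0^t(|f_n|_{l-1}^2+|g_n|_l^2)\,ds+C\sum_\rho\!\int_0^t|h_n^\rho|_l^2\,d\|B_n^\rho\|,
\]
the estimates combine into $0\le y_n(t)\le\int_0^ty_n(s)\,dQ_n(s)+m_n(t)+q_n(t)$ on $[0,T]$, with $d\langle m_n\rangle\le C(y_n^2+|g_n|_l^2 y_n)\,dt$ by Lemma~\ref{Lemma:Gyongy}(i). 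Assumption~\ref{assumption free terms rate} yields $Q_n(T)=o(\ln n)$ and $q_n(T)=O(n^{-2\gamma})$, so Corollary~\ref{Corollary:Gronwall stopped} (applied with $2\gamma$ in place of $\gamma$ and with a localising sequence reducing $m_n$ to a true martingale) delivers $\sup_{t\le T}y_n(t)=O(n^{-2\kappa})$ for every $\kappa<\gamma$, which is \eqref{2.3.1.12}. The only subtle point I anticipate is the choice of $k_n$ in the martingale bracket condition: since Assumption~\ref{assumption free terms rate} does not provide a pointwise bound on $|g_n|_l$, the naive $k_n\equiv C|g_n|_l^2$ does not immediately yield $\int k_n\,dQ_n=O(n^{-2\gamma})$ (the $\int|g_n|_l^2\,d\|B_n\|$ part is uncontrolled). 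The fix is to define $k_n$ via the Radon--Nikodym derivative of $dt$ with respect to $dQ_n$, so that $k_n\,dQ_n=C|g_n|_l^2\,dt$ and $\int_0^Tk_n\,dQ_n=C\!\int_0^T|g_n|_l^2\,ds=O(n^{-2\gamma})$ as required.
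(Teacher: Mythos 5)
Your proof is correct and follows essentially the same route as the paper: apply It\^o's formula to $|D^\alpha u_n|^2_0$ (or directly in the Gelfand triple), use Assumption~\ref{assumption sp} together with the commutator estimates of Lemma~\ref{Lemma:Gyongy} to extract the coercive term $-\lambda|u_n|_{l+1}^2$ after absorbing $|\cM_n^k u_n|_l^2$, control the $B_n^\rho$-integrand and the martingale bracket by Lemma~\ref{Lemma:Gyongy}(i), and invoke the Gronwall-type lemma with $2\gamma$ in place of $\gamma$. Your ``Radon--Nikodym'' fix for $k_n$ is precisely what the paper does, setting $\gamma_n(t)=dt/dQ_n(t)$ so that $k_n\,dQ_n=C|g_n|_l^2\,dt$. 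One minor note: you can apply Lemma~\ref{Lemma:gronwall} directly --- $m_n$ being a local martingale is already allowed there, and no auxiliary stopping sequence $\sigma_n$ enters this argument (unlike in Theorem~\ref{theorem growth nondeg}), so Corollary~\ref{Corollary:Gronwall stopped} is not needed.
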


\begin{proof}
By the definition of the generalised solution
$$
(u_n(t),\varphi) = (u_{n0},\varphi)
$$
$$
+\int_0^t\big[-(\fra^{ij}_nD_iu_n(s),D_j\varphi)
 +\big((\fra^i_n-(D_j\fra_{n}^{ij}))D_iu_n(s)+\fra_nu_n(s)+f_n(s),\varphi\big)\big]\,ds                                \\
$$
$$
 \quad +\int_0^t(\cM^ku_n(s)+g_n^k(s),\varphi)\,dW^k(s)
     +\int_0^t(\cN^{\rho}_nu_n(s)+h_n^{\rho}(s),\varphi)\,dB^{\rho}_n(s)
$$
for all $\varphi\in C^{\infty}_0(\bR^d)$.  By It\^o's formula
$$
|u_n(t)|^2_0=|u_{n0}|^2_0+\int_0^t\mathcal I_n(s)\,ds
+\int_0^t\mathcal J^k_n(s)\,dW^k(s)+\int_0^t\mathcal K^{\rho}_n(s)\,dB^{\rho}(s),
$$
with
$$
\cI_n=-2(\fra^{ij}_nD_iu_n,D_ju_n)
 +2\big((\fra^i_n-\fra_{nj}^{ij})D_iu_n+\fra_nu_n+f_n,u_n\big)
 $$
 $$
 +(\frb^{ik}_nD_iu_n+\frb^k_nu_n+g_n^k,\frb^{jk}_nD_ju_n
 +\frb^k_nu_n+g_n^k),
$$
$$
\cJ^k_n=2(\cM^k_nu_n+g^k_n,u_n),
\quad \cK^{\rho}_n=2(\cN_n^{\rho}u_n+h^{\rho}_n,u_n),
$$
where $\fra_{nj}^{ij}:=D_j\fra_{n}^{ij}$.
By Assumption \ref{assumption sp}
$$
-2(\fra^{ij}_nD_iu_n,D_ju_n)+(\frb^{ik}_nD_iu_n,\frb^{jk}_nD_ju_n)
\leq -2\lambda\sum_{i=1}^d|D_iu_n|^2.
$$
Hence by standard estimates and Lemma \ref{Lemma:Gyongy} \textit{(i)},
$$
\cI_n\leq -\lambda|u_n|^2_1+C(|u_n|^2+|f_n|_{-1}^2+|g_n|^2),
\quad |\cK^{\rho}_n|\leq C(|u_n|^2+|h_n^{\rho}|^2),
$$
$$
|\cJ^k_n|^2\leq C(|u_n|^4+|g^k_n|^2|u_n|^2),
$$
with a constant $C=C(K,\lambda,d,d_1)$.
Consequently, almost surely
$$
|u_{n}(t)|^2+\lambda \int_0^t|u_n|^2_1\,ds\leq |u_{n0}|^2
+\int_0^t|u_n|^2\,dQ_n                                          \nonumber\\
$$
\begin{equation}
+\int_0^t\cJ^k_n\,dW^k+C\int_0^t\left(|f_n|_{-1}^2+|g_n|^2\right)\,ds
+C\int_0^t|h_n^{\rho}|^2\,d\|B^{\rho}_n\|          \label{2.2.1.12}
\end{equation}
for all $t\in[0,T]$ and $n\geq1$,
 where $Q_n(s)=C\left(s+\sum_{\rho=1}^{d_2}\|B^{\rho}_n\|(s)\right)$.
 By Assumption \ref{assumption free terms rate}
 we have
 $$
 Q_n(T)=o(\ln n),
 \quad |u_{n0}|^2+\int_0^T|f_n|_{-1}^2\,ds
+\int_0^T|h_n^{\rho}(s)|^2\,d\|B^{\rho}_n\|(s)=O(n^{-2\gamma})
 $$
almost surely.
Notice also that for
\begin{equation}                                                                                 \label{1.3.1.12}
 m_n(t):=\int_0^t\cJ^k_n\,dW^k(s)
\end{equation}
 we have
 $$
 d\<m_n\>=\sum_{k=1}^{d_1}|\cJ^k_n|^2\,dt
 \leq Cd_1(|u_n(t)|^4+\gamma_n|g_n|^2|u_n(t)|^2)\,dQ_n,
 $$
where $\gamma_n(t)=dt/dQ_n(t)$.
Due to Assumption \ref{assumption free terms rate}
$$
 \int_0^T\gamma_n|g_n|^2\,dQ_n
 = \int_0^T|g_n|^2\,ds=O(n^{-2\gamma})
 \quad
 \text{almost surely}.
 $$
Hence applying Lemma \ref{Lemma:gronwall} with
 $$
 y_n(t):=|u_{n}|^2+\lambda \int_0^t|u_n|^2_1\,ds,
$$
$$
 q_n(t):=|u_{n0}|^2+C\int_0^t(|f_n|_{-1}^2+|g_n|^2)\,ds
+C\int_0^t|h_n^{\rho}|^2\,d\|B^{\rho}_n\|,
 $$
and with $m_n$ defined in \eqref{1.3.1.12}, from \eqref{2.2.1.12} we
get \eqref{2.3.1.12} for $l=0$.

Assume now that $l\geq1$ and let
$\alpha$  be a multi-index such that $1\leq|\alpha|\leq l$.
Then $\alpha=\beta+\gamma$ for some multi-index $\gamma$ of length 1,
and by definition of the generalised solution we get
$$
(D^{\alpha} u_n(t),\varphi) =(D^{\alpha}u_{n0},\varphi)                                                                     \\
$$
$$
-\int_0^t\big[(D^{\alpha}\fra^{ij}_nD_iu_n,D_j\varphi)
 +\big(D^{\beta}\{(\fra^i_n-\fra_{nj}^{ij})D_iu_n+\fra_nu
 +f_n\},D^{\gamma}\varphi\big)\big]\,ds                                                                                              \\
 $$
 $$
 +\int_0^t(D^{\alpha}\cM^ku_n+D^{\alpha}g^k,\varphi)\,dW^k(s)
     +\int_0^t(D^{\alpha}\cN^{\rho}_nu_n
     +D^{\alpha}h_n^{\rho},\varphi)\,dB^{\rho}(s).
$$
Hence by It\^o's formula
$$
|D^{\alpha}u_n(t)|^2_0=|D^{\alpha}u_{n0}|^2_0
+\int_0^t\mathcal I_n^{\alpha}\,ds
+\int_0^t\mathcal K^{\rho\alpha}_n\,dB^{\rho}
+m_n^{\alpha}(t),
$$
with
$$
m_n^{\alpha}(t)=\int_0^t\mathcal J^{k\alpha}_n\,dW^k,
\quad
\cJ^{k\alpha}_n
=2(D^{\alpha}\cM^k_nu_n+D^{\alpha}g^k_n,D^{\alpha}u_n),
$$
$$
\cI_n^{\alpha}=-2(D^{\alpha}\fra^{ij}_nD_iu_n,D_jD^{\alpha}u_n)
 -2\big(D^{\beta}\{(\fra^i_n-\fra_{nj}^{ij})D_iu_n+\fra_nu+f_n\},
 D^{\alpha}D^{\gamma}u_n\big)
 $$
 $$
 +(D^{\alpha}\{\frb^{ik}_nD_iu_n+\frb^ku_n+g^k\},
 D^{\alpha}\{\frb^{jk}_nD_ju_n+\frb^k_n\frb^k_nu_n+g^k\}),
$$
$$
\cK^{\rho\alpha}_n=2(D^{\alpha}\cN_n^{\rho}u_n+D^{\alpha}h^{\rho}_n,
D^{\alpha}u_n).
$$
Due to Assumptions \ref{assumption sp}
and \ref{assumption 1.2.1.12} we get
$$
-2(D^{\alpha}\fra^{ij}_nD_iu_n,D^{\alpha}D_ju_n)+(D^{\alpha}\frb^{ik}_nD_iu_n,
D^{\alpha}\frb^{jk}_nD_ju_n)
$$
$$
\leq -\lambda\sum_{i=1}^d|D^{\alpha}D_iu_n|^2+C|u_n|^2_l
$$
with a constant $C=C(K,d,d_1,l)$.
Hence by standard estimates
$$
\cI_n^{\alpha}
\leq -\frac{\lambda}{2}
\sum_{i=1}^{d}
|D^{\alpha}D_iu_n|^2+C(|u_n|^2_l+|f_n|_{l-1}^2+|g_n|^2_l),
$$
and by Lemma \ref{Lemma:Gyongy}  \textit{(i)},
$$
|\cK^{\rho\alpha}_n|\leq C(|u_n|^2_l+|h_n^{\rho}|^2_{l}),
\quad
|\cJ^{k\alpha}_n|^2\leq C(|u_n|^4_l+|g_n^k|^2_l|u_n|^2_l),
$$
with a constant $C=C(K,\lambda,d,d_1,l)$.
Consequently, almost surely
$$
|D^{\alpha}u_{n}(t)|^2_0
+\frac{\lambda}{2}
\int_0^t\sum_{i=1}^{d}|D^{\alpha}D_iu_n|^2\,ds
\leq |D^{\alpha}u_{n0}|^2
+C\int_0^t|u_n|^2_l\,dV_n
$$
\begin{equation}                                                                                                   \label{3.3.1.12}
+C\int_0^t(|f_n|_{l-1}^2+|g_n|^2_l)\,ds
+C\int_0^t|h_n^{\rho}|^2_l\,d\|B^{\rho}_n\|+m_n^{\alpha}(t)
\end{equation}
for every $\alpha$, such that $1\leq|\alpha|\leq l$.
By virtue of \eqref{2.2.1.12} this inequality holds also for
$|\alpha|=0$. Thus summing up inequality \eqref{3.3.1.12} over
all multi-indices $\alpha$ with $|\alpha|\leq l$
we get almost surely
\begin{align}
y_n(t):=|u_n(t)|_{l}^2+\frac{\lambda}{2}\int_0^t|u_n|^2_{l+1}\,ds
&\leq
\int_0^t|u_n|^2_l\,dQ_n +M_n(t)+q_n(t)                                     \nonumber\\
&\leq \int_0^ty_n\,dQ_n +M_n(t)+q_n(t)
\end{align}
for all $t\in[0,T]$ and $n\geq1$, with
$$
M_n(t)=\sum_{|\alpha|\leq l}m_n^{\alpha}(t),
$$
$$
q_n(t)= |u_{n0}|^2_l
+C\int_0^t(|f_n|_{l-1}^2+|g_n|^2_l)\,ds
+C\int_0^t|h_n^{\rho}|^2_l\,d\|B^{\rho}_n\|(s),
$$
and a constant $C=C(K,\lambda,d,d_1,l)$. Clearly,
 $$
 d\<m_n^{\alpha}\>=\sum_{k=1}^{d_1}|\cJ^{k\alpha}_n|^2\,dt
 \leq C(|u_n|^4_l+\gamma_n|g_n|^2_l|u_n|^2_l)\,dQ_n,
 $$
so
$$
d\<M_n\>\leq C(|u_n|^4_l+\gamma_n|g_n|_l^2|u_n|^2_l)\,dQ_n
\leq C(y_n^2+\gamma_n|g_n|^2_ly_n)\,dQ_n
$$
with constants $C=C(K,\lambda,d,d_1,l)$. Hence we finish
the proof of the lemma by using
Assumption \ref{assumption free terms rate} and applying
Lemma \ref{Lemma:gronwall}.
\end{proof}

In the degenerate case, i.e., when $\lambda=0$ in Assumption \ref{assumption sp},
we need to replace Assumptions \ref{assumption 1.2.1.12} and
\ref{assumption free terms rate} by somewhat stronger assumptions in order
to have the conclusion of the previous lemma.

 \begin{assumption}                                                                                            \label{assumption 1.4.1.12}
 The coefficients $\fra^{ij}_n$ and their derivatives in $x$
 up to order $l\vee2$,
 the coefficients $\frb^{ik}_n$, $\fra^{i}_n$, $\frc^{i\rho}_n$
 and their derivatives in $x$
 up to order $l\vee1$,
 and  the coefficients $\fra_n$,  $\frb^{k}_n$,
$\frc^{\rho}_n$ and their derivatives in $x$ up to order $l$
 are $\cP\times\cB(\bR^d)$-measurable real functions,
 in magnitude bounded by $K$ for all $i,j=1,...,d$, $k=1,...,d_1$,
 $\rho=1,...,d_2$ and $n\geq1$.
 \end{assumption}

\begin{assumption}                                                                                     \label{assumption 2.4.1.12}
We have $\abs{u_{n0}}_l=O(n^{-\gamma})$,
$$
\int_0^T|f_n|^2_{l}\,ds=O(n^{-2\gamma}), \quad
\int_0^T|g_n|^2_{l+1}\,ds=O(n^{-2\gamma}),
$$
$$
\int_0^t|h_n(t)|_{l}^2\,d\|B^{\rho}\|(t)=O(n^{-2\gamma}),
\quad \sum_{\rho=1}^{d_2}\|B^{\rho}_n\|(T)=o(\ln n).
$$
\end{assumption}

\begin{thm}                                                                                \label{theorem 1.20.1.12}
Let Assumptions \ref{assumption 1.4.1.12},
\ref{assumption 2.4.1.12} and
\ref{assumption sp} (with $\lambda=0$) hold.
Let $u_n$ be an $H^{l+1}$-valued weakly
continuous generalized solution of  \eqref{7.22.1 with n}-\eqref{1.2.1.12}.
Then
\begin{equation}                                                                              \label{2.20.1.12}
\sup_{t\leq T}|u_n(t)|_{l}=O(n^{-\kappa})\quad
\text{a.s. for each $\kappa<\gamma$}.
\end{equation}
\end{thm}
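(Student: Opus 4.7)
The proof will follow the same scheme as that of Theorem~\ref{theorem:decay}: differentiate the weak form of \eqref{7.22.1 with n} $|\alpha|$ times in $x$ for each multi-index $|\alpha|\le l$, apply It\^o's formula in the triple $H^{l+1}\hookrightarrow H^l\hookrightarrow H^{l-1}$ to $|D^{\alpha}u_n|_0^2$, sum over $\alpha$, and derive an inequality of the form
$$
y_n(t):=|u_n(t)|_l^2\le \int_0^t y_n(s)\,dQ_n(s)+M_n(t)+q_n(t),
$$
valid for all $t\in[0,T]$, which is then closed via Lemma~\ref{Lemma:gronwall} with $\delta=0$. All the lower-order pieces (first-order operators $\cM_n^k$, $\cN_n^{\rho}$, and the free terms) are estimated exactly as in the proof of Theorem~\ref{theorem:decay}, using Lemma~\ref{Lemma:Gyongy}(i) and the Cauchy--Schwarz inequality; the stronger hypothesis $|g_n|_{l+1}$ in Assumption~\ref{assumption 2.4.1.12} is required because of the mixed term $(\frb^{ik}_n D^{\alpha}D_iu_n,D^{\alpha}g_n^k)$ that appears after integration by parts.

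The genuinely new feature is the treatment of the principal second-order terms when $\lambda=0$. After differentiating and integrating by parts, the It\^o drift produced by $|D^{\alpha}u_n|_0^2$ contains, besides commutator remainders, the top-order combination
$$
-2(\fra^{ij}_n D^{\alpha}D_iu_n,D^{\alpha}D_ju_n)+(\frb^{ik}_n D^{\alpha}D_iu_n,\frb^{jk}_n D^{\alpha}D_ju_n)\le 0,
$$
by Assumption~\ref{assumption sp} with $\lambda=0$. In the non-degenerate case the analogous quantity contributed $-\lambda|u_n|_{l+1}^2$, which absorbed every remainder of size $|u_n|_{l+1}^2$ generated by commutators; here only the sign suffices, so each cross term in the Leibniz expansion
$$
D^{\alpha}(\fra^{ij}_n D_iu_n)=\fra^{ij}_n D^{\alpha}D_iu_n+\sum_{0<\beta\le\alpha}\tbinom{\alpha}{\beta}(D^{\beta}\fra^{ij}_n)D^{\alpha-\beta}D_iu_n
$$
must be treated individually. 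The crucial step is to integrate by parts in $x_j$ every cross term of the form $(D^{\beta}\fra^{ij}_n\,D^{\alpha-\beta}D_iu_n,D^{\alpha}D_ju_n)$ with $|\beta|\ge1$: this shifts one derivative back onto the coefficient, and the extra regularity of $\fra^{ij}_n$ up to order $l\vee2$ (one more than in the non-degenerate Assumption~\ref{assumption 1.2.1.12}), which is precisely what Assumption~\ref{assumption 1.4.1.12} provides, guarantees that the resulting expression is bounded by $C|u_n|_l^2$ with no surviving $|u_n|_{l+1}^2$.

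Putting these estimates together produces the Gronwall inequality above with
$$
dQ_n=C\,dt+C\!\!\sum_{\rho=1}^{d_2}\!d\|B^{\rho}_n\|,\qquad q_n(t)=|u_{n0}|_l^2+C\!\!\int_0^t\!\!(|f_n|_l^2+|g_n|_{l+1}^2)\,ds+C\!\!\int_0^t\!\!|h_n^{\rho}|_l^2\,d\|B^{\rho}_n\|,
$$
so that Assumption~\ref{assumption 2.4.1.12} yields $Q_n(T)=o(\ln n)$ and $\sup_{t\le T}q_n(t)=O(n^{-2\gamma})$ almost surely. By Lemma~\ref{Lemma:Gyongy}(i) the quadratic variation of $M_n=\sum_{|\alpha|\le l}\int_0^\cdot 2(D^{\alpha}\cM_n^ku_n+D^{\alpha}g_n^k,D^{\alpha}u_n)\,dW^k$ satisfies $d\langle M_n\rangle\le C(y_n^2+\gamma_n|g_n|_l^2 y_n)\,dQ_n$ with $\gamma_n=dt/dQ_n$, and $\int_0^T\gamma_n|g_n|_l^2\,dQ_n=\int_0^T|g_n|_l^2\,dt=O(n^{-2\gamma})$. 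Lemma~\ref{Lemma:gronwall} then gives \eqref{2.20.1.12}. The main obstacle is the commutator bookkeeping in the second paragraph: one must verify that a single extra $x$-derivative of $\fra^{ij}_n$ together with one integration by parts is enough to kill every $|u_n|_{l+1}^2$ remainder arising from the Leibniz expansion, since without the coercive gain $-\tfrac{\lambda}{2}|u_n|_{l+1}^2$ available in Theorem~\ref{theorem:decay} there is no margin to absorb any such term.
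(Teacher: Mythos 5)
Your Gronwall scaffolding (It\^o's formula applied to $|D^{\alpha}u_n|_0^2$, summing over $|\alpha|\le l$, and closing with Lemma~\ref{Lemma:gronwall} using $Q_n$, $q_n$, $M_n$ and $\gamma_n$ exactly as in Theorem~\ref{theorem:decay}) matches the paper and is fine. The gap is in the core drift estimate. The paper does not derive $\cI_n^{\alpha}\le C(|u_n|_l^2+|f_n|_l^2+|g_n|_{l+1}^2)$ by Leibniz bookkeeping; it cites Lemma~2.1 and Remark~2.1 of \cite{KR}, which is a genuinely nontrivial degenerate energy estimate.

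Your proposed derivation of that estimate does not close. First, even on the $\fra$ side alone: a cross term such as $(D^{\beta}\fra^{ij}_n\,D^{\alpha-\beta}D_iu_n, D^{\alpha}D_ju_n)$ with $|\beta|=1$, $|\alpha|=l$, after one integration by parts in $x^j$, produces a piece $(D^{\beta}\fra^{ij}_n\,D^{\alpha-\beta}D_iD_ju_n, D^{\alpha}u_n)$ which is of order $|u_n|_{l+1}\,|u_n|_l$; with $\lambda=0$ there is no $-\tfrac{\lambda}{2}|u_n|_{l+1}^2$ available to absorb it, and no further integration by parts removes it without requiring $l+1$ derivatives of $\fra^{ij}_n$ (Assumption~\ref{assumption 1.4.1.12} only gives $l\vee 2$). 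Second and more fundamentally, you plan to estimate the $\cM_n^k$-quadratic contribution ``exactly as in Theorem~\ref{theorem:decay}, using Lemma~\ref{Lemma:Gyongy}(i)'', but the expansion of $\sum_k|D^{\alpha}(\cM^k_nu_n+g^k_n)|_0^2$ itself contains cross terms like $\bigl((D_p\frb^{ik}_n)D^{\alpha-e_p}D_iu_n,\,\frb^{jk}_nD^{\alpha}D_ju_n\bigr)$ of size $|u_n|_l\,|u_n|_{l+1}$; these do not cancel against anything on the $\fra$ side if you treat each term individually. The whole point of the Krylov--Rozovskii estimate is that the top-order \emph{and} the sub-principal Leibniz terms of $2\langle D^{\alpha}\cL_n u,D^{\alpha}u\rangle$ and $\sum_k|D^{\alpha}\cM^k_n u|_0^2$ cancel jointly, via the stochastic parabolicity condition $\fra^{ij}_n-\tfrac12\frb^{ik}_n\frb^{jk}_n\ge 0$ and symmetrization, at every order, not just at leading order. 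A term-by-term integration-by-parts on the $\fra$ cross-terms alone, which is what your plan prescribes, leaves uncontrolled $|u_n|_{l+1}$ remainders on both sides and the proof does not close.
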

\begin{proof}
Let
$\alpha$  be a multi-index such that $|\alpha|\leq l$.
Then, as in the proof of the previous theorem, by It\^o's formula we have
\begin{equation}                                                                                 \label{4.4.1.12}
|D^{\alpha}u_n(t)|^2=|D^{\alpha}u_{n0}|^2
+\int_0^t\mathcal I_n^{\alpha}\,ds
+\int_0^t\mathcal K^{\rho\alpha}_n\,dB^{\rho}
+m_n^{\alpha}(t),
\end{equation}
with
$$
m_n^{\alpha}(t)=\int_0^t\mathcal J^{k\alpha}_n\,dW^k,
\quad
\cJ^{k\alpha}_n
=2(D^{\alpha}\cM^k_nu_n+D^{\alpha}g^k_n,D^{\alpha}u_n),
$$
$$
\cI_n^{\alpha}=-2(D^{\alpha}\fra^{ij}_nD_iu_n,D_jD^{\alpha}u_n)
 -2\big(D^{\beta}\{(\fra^i_n-\fra_{nj}^{ij})D_iu_n+\fra_nu+f_n\},
 D^{\alpha}D^{\gamma}u_n\big)
 $$
 $$
 +(D^{\alpha}\frb^{ik}_nD_iu_n+D^{\alpha}\frb^k_nu_n+D^\alpha g_n^k,
 D^{\alpha}\frb^{jk}_nD_ju_n+D^{\alpha}\frb^k_nu_n+D^\alpha g_n^k),
$$
$$
\cK^{\rho\alpha}_n=2(D^{\alpha}\cN_n^{\rho}u_n+D^{\alpha}h^{\rho}_n,
D^{\alpha}u_n),
$$
where $\beta$ and $\gamma$ are multi-indices such that $\alpha=\beta+\gamma$ and
$|\gamma|=1$ if $|\alpha|\geq1$.
By \cite[Lemma~2.1]{KR} and \cite[Remark~2.1]{KR},
$$
\cI_n^{\alpha}\leq C(|u_n|_{l}^2+|f_n|^2+|g_n|^2_{l+1}),
$$
and  by  Lemma \ref{Lemma:Gyongy}  \textit{(i)},
$$
|\cK^{\rho\alpha}_n|\leq C(|u_n|^2_l+|h_n^{\rho}|^2_{l})
$$
with a constant $C=C(K,d,d_1,l)$. Thus from \eqref{4.4.1.12} we get
$$
|D^{\alpha}u_{n}(t)|^2
\leq |D^{\alpha}u_{n0}|^2
+\int_0^t|u_n|^2_l\,dQ_n
$$
\begin{equation}                                                                                                   \label{5.4.1.12}
+C\int_0^t(|f_n|_{l}^2+|g_n|^2_{l+1})\,ds
+C\int_0^t|h_n^{\rho}|^2_l\,d\|B^{\rho}_n\|+m_n^{\alpha}(t)
\end{equation}
for $|\alpha|\leq l$,
where $Q_n(s)=C\left(s+\sum_{\rho=1}^{d_2}\|B^{\rho}_n\|(s)\right)$.
Summing up these inequalities over $\alpha$, $|\alpha|\leq l$,
we obtain
\begin{equation}                                                                                                     \label{6.4.1.12}
y_n(t):=|u_n(t)|_{l}^2
\leq \int_0^t|u_n|^2_l\,dQ_n +M_n(t)+q_n(t)
\end{equation}
for all $t\in[0,T]$ and $n\geq1$, where
$$
M_n(t)=\sum_{|\alpha|\leq l}m_n^\alpha(t),
$$
$$
q_n(t)= |u_{n0}|^2_l
+C\int_0^t(|f_n|_{l}^2+|g_n|^2_{l+1})\,ds
+C\int_0^t|h_n^{\rho}|^2_l\,d\|B^{\rho}_n\|,
$$
and $C=C(K,d,d_1,l)$ is a constant. Hence the rest of the proof
is the same as that in the proof of the previous theorem.
\end{proof}

\section{Proof of the main theorems}                                                                  \label{section main}

To prove our main results we look for processes $r_n$
such that
\begin{equation}                                                                                                  \label{10.22.1.12}
\sup_{t\leq T}|r_n(t)|_m=O(n^{-\kappa}) \quad\text{a.s.
for each $\kappa<\gamma$},
\end{equation}
and
$
v_n:=u-u_n-r_n
$
solves a suitable Cauchy problem of the type
\eqref{7.22.1 with n}-\eqref{1.2.1.12},
satisfying the conditions of Theorem \ref{theorem:decay}
or Theorem \ref{theorem 1.20.1.12},
so that we could get for each $\kappa<\gamma$
$$
\sup_{t\leq T}|v_n|_{m}=O(n^{-\kappa})\quad\text{a.s. for each $\kappa<\gamma$}.
$$

\subsection{Proof of Theorem \ref{theorem main1}}
We will carry out the strategy above in several steps,
formulated as lemmas below.
By a well-known result, see, e.g., \cite{R2},
$u$ is an $H^{m+1}$-valued strongly continuous process, and
\begin{equation}                                                                             \label{3.22.1.12}
\sup_{t\leq T}|u|^2_{m+1}+\int_0^T|u|^2_{m+2}\,dt<\infty
\quad\text{almost surely}.
\end{equation}
Moreover, we can apply Theorem \ref{theorem growth nondeg}
with $l=m+3$ to get
\begin{equation}                                                                            \label{2.22.1.12}
\sup_{t\leq T}|u_n|^2_{m+3}+\int_0^T|u_n|^2_{m+4}\,dt
=O(n^{\varepsilon})\quad\text{a.s. for any $\varepsilon>0$}.
\end{equation}
Notice that $u-u_n$ satisfies
\begin{align}
d(u-u_n) = &\{L_n(u-u_n)+\bar f_n\}\,dt
+\{M^k_n(u-u_n)+\bar g_n^k\}\,dW^k                                                 \nonumber \\
&+\tfrac{1}{2}\{M^kM^ku+M^kg^k\}\,dt+(M^k_nu_n+g^k_n)\,d(W^k-W^k_n),                                              \label{4.22.1.12}
\end{align}
with
$$
\bar f_n:=f-f_n+(L-L_n)u, \quad \bar g^k_n:=g^k-g^k_n+(M^k-M^k_n)u.
$$
Notice also
that due to \eqref{3.22.1.12}
and Assumption \ref{assumption 06.02.08}
we have
\begin{equation}                                                                              \label{5.22.1.12}
\int_0^T(|\bar f_n|_{m-1}^2+|\bar g_n|^2_m)\,dt=O(n^{-2\gamma}).
\end{equation}
Next we rewrite equation \eqref{4.22.1.12} as an equation  for
$$
w_n=u-u_n-z_n,\quad \text{where $z_n=(M^k_nu_n+g^k_n)(W^k-W^k_n)$}.
$$
Note that by \eqref{2.22.1.12} and by our assumptions we have
for each $\kappa<\gamma$
\begin{equation}                                                                             \label{6.6.3.12}
\sup_{t\leq T}|z_n(t)|_m^2+\int_0^T|z_n(t)|_{m+1}^2\,dt
=O(n^{-2\kappa})\quad \text{almost surely}.
\end{equation}
Set $\cL_n:=L_n+\frac{1}{2}M_n^kM^k_n$ and recall the definition of
$S_n^{kl}$ in Remark \ref{Rem:Sn and An}.

\begin{lem}                                                                       \label{lemma eq vn 1}
The process $w_n$ solves
\begin{align}
dw_n=&(\cL_nw_n+F_n)\,dt+(M^k_nw_n+G_n^k)\,dW^k                                      \nonumber\\
&-M^k_n(M^l_nu_n+g^l_n)\,dS_n^{kl},                                                   \label{eq w1}
\end{align}
where $G^k_n=\bar g^k_n+M^k_nz_n$ and
\begin{align*}
F_n=&\bar f_n+\tfrac{1}{2}(M^kM^k-M^k_nM^k_n)u
+\tfrac{1}{2}(M^kg^k-M^k_ng^k_n)                                                       \nonumber\\
&-(M^k_n(L_nu_n+f_n))(W^k-W^k_n)+\cL_nz_n.                                               \nonumber
\end{align*}
\end{lem}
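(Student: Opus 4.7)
The plan is to start from the evolution equation \eqref{4.22.1.12} for $u-u_n$ and compute $dw_n=d(u-u_n)-dz_n$ by applying Itô's product rule to the product $z_n=\phi_n^k\cdot(W^k-W_n^k)$, where I abbreviate $\phi_n^k:=M_n^ku_n+g_n^k$. Since the hypothesis of Theorem \ref{theorem main1} says that ${\mathbf b}_n$ and $g_n$ are independent of $t$, the operator $M_n^k$ is also time-independent, and so $d\phi_n^k=M_n^k\,du_n=M_n^k(L_nu_n+f_n)\,dt+M_n^k(M_n^lu_n+g_n^l)\,dW_n^l$.

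Because $W_n^k$ has finite variation, $\phi_n^k$ is itself of finite variation, and therefore the cross-variation $[\phi_n^k,W^k-W_n^k]$ vanishes. Integration by parts then yields
\begin{align*}
dz_n &= \phi_n^k\,d(W^k-W_n^k)+(W^k-W_n^k)\,d\phi_n^k \\
 &= (M_n^ku_n+g_n^k)\,d(W^k-W_n^k) \\
 &\quad +(W^k-W_n^k)M_n^k(L_nu_n+f_n)\,dt+(W^k-W_n^k)M_n^k(M_n^lu_n+g_n^l)\,dW_n^l.
\end{align*}
Subtracting this from \eqref{4.22.1.12} the awkward term $\phi_n^k\,d(W^k-W_n^k)$ cancels exactly.

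The next move is the conversion of $dW_n^l$-integrals into $dS_n^{kl}$-integrals. From the definition \eqref{Sn with qn} one has the identity $(W^k-W_n^k)\,dW_n^l=dS_n^{kl}+\tfrac{1}{2}\delta_{kl}\,dt$; plugging this into the remaining $dW_n^l$ term in $dz_n$ produces the factor $-M_n^k(M_n^lu_n+g_n^l)\,dS_n^{kl}$ (which matches the target equation \eqref{eq w1}) together with an extra drift $-\tfrac{1}{2}M_n^k(M_n^ku_n+g_n^k)\,dt$.

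Finally I rearrange the remaining drifts. Writing $u_n=u-z_n-w_n$ in the spurious drift piece gives
\[
-\tfrac{1}{2}M_n^kM_n^ku_n=-\tfrac{1}{2}M_n^kM_n^ku+\tfrac{1}{2}M_n^kM_n^kz_n+\tfrac{1}{2}M_n^kM_n^kw_n,
\]
and the term $\tfrac{1}{2}M_n^kM_n^kw_n$ combines with $L_nw_n$ to form $\cL_nw_n$, while $\tfrac{1}{2}M_n^kM_n^kz_n$ combines with $L_nz_n$ to form $\cL_nz_n$. The leftover pieces $\bar f_n$, $\tfrac12(M^kM^ku+M^kg^k)-\tfrac12(M_n^kM_n^ku+M_n^kg_n^k)$, and $-(W^k-W_n^k)M_n^k(L_nu_n+f_n)$ assemble precisely into the $F_n$ in the statement, and the $dW^k$-coefficient collapses to $M_n^k(u-u_n)+\bar g_n^k=M_n^kw_n+M_n^kz_n+\bar g_n^k=M_n^kw_n+G_n^k$.

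The only real subtlety is ensuring that $\phi_n^k$ is of bounded variation so the covariation term in Itô's product rule vanishes; this is where the independence of $\mathbf{b}_n$ and $g_n$ from $t$ enters crucially, together with the finite-variation nature of $W_n^k$. The rest is algebra and careful bookkeeping of drifts.
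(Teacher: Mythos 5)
Your proof is correct and follows essentially the same route as the paper's: compute $dz_n$ by the integration-by-parts/Itô product rule (the paper states the resulting formula \eqref{3.20.1.12} without derivation), subtract it from \eqref{4.22.1.12}, invoke the identity $(W^k-W^k_n)\,dW^l_n=dS_n^{kl}+\tfrac{1}{2}\delta_{kl}\,dt$ coming from \eqref{Sn with qn}, and then regroup the drift by splitting $u-u_n$ (equivalently $u_n=u-z_n-w_n$) to manufacture $\cL_n w_n$ and $\cL_n z_n$. The one addition you make — noting explicitly that $\phi_n^k=M_n^ku_n+g_n^k$ is a finite-variation process (because $W_n$ is, and $\mathbf b_n,g_n$ are $t$-independent), so $[\phi_n^k,W^k-W_n^k]\equiv 0$ and no Itô correction appears — is a worthwhile clarification that the paper leaves implicit behind the phrase ``by Itô's formula one can easily verify.''
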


\begin{proof} By using It\^{o}'s formula one can easily verify that
\begin{align}
    dz_n
    =&M^k_n(L_nu_n+f_n)(W^k-W^k_n)\,dt
     +M^k_n(M^l_nu_n+g^l_n)(W^k-W_n^k)\,dW_n^l                                \nonumber\\
     &+(M^k_nu_n+g^k_n)\,d(W^k-W_n^k).                                                     \label{3.20.1.12}
\end{align}
Hence
\begin{align*}
    dw_n = &\{L_n(u-u_n)+\bar f_n+\tfrac{1}{2}M^kM^ku
    +\tfrac{1}{2}M^kg^k                                                              \\
&-M^k_n(L_nu_n+f_n)(W^k-W^k_n)\}\,dt                                                  \\
        &+\{M^k_n(u-u_n)+\bar g_n^k\}\,dW^k
        -M^k_n(M^l_nu_n+g^l_n)(W^k-W^k_n)\,dW^l_n                                      \\
        = &\{\cL_n(u-u_n)+\bar f_n+\tfrac{1}{2}(M^kM^k-M^k_nM^k_n)u
        +\tfrac{1}{2}(M^kg^k-M^k_ng^k_n)\}\,dt                                                                                    \\
        &-(M^k_nL_nu_n+f_n)(W^k-W^k_n))\,dt \\
        &+(M^k_nw_n+G_n^k)\,dW^k
       -M^k_n(M^l_nu_n+g^l_n)\,dS_n^{kl}                                             \\
        = &(\cL_nw_n+F_n)\,dt
        +(M^kv_n+G_n^k)\,dW^k-M^k_n(M^l_nu_n+g^l_n)\,dS^{kl}_n,
\end{align*}
The lemma is proved.
\end{proof}
It is easy to show that
due to \eqref{5.22.1.12}, \eqref{2.22.1.12}, \eqref{3.22.1.12},
Assumptions  \ref{assumption 06.02.07},
\ref{assumption 1.22.1.12}, \ref{assumption 06.02.08},
and \ref{assumption 15.04.06} $(i)$
we have
\begin{equation}                                                                              \label{13.7.3.12}
\int_0^T(|F_n|_{m-1}^2+|G_n|^2_m)\,dt=O(n^{-2\kappa})
\quad\text{(a.s.) for each $\kappa<\gamma$}.
\end{equation}

We rewrite the last term in the right-hand side of
{\eqref{eq w1} into symmetric
and antisymmetric parts as follows:
$$
    M^k_n(M^l_nu_n+g^l_n)\,dS_n^{kl}
    = \tfrac{1}{2}(M^k_nM^l_n+M^l_nM^k_n)u_n\,dS_n^{kl}
$$
$$
+\tfrac{1}{2}(M^k_nM^l_n-M_n^lM^k_n)u_n\,dS_n^{kl}                                                 \\
        +\tfrac{1}{2}M^k_ng^l_n\,d(S_n^{kl}+S_n^{lk})
+\tfrac{1}{2}(M^l_ng^k_n-M^k_ng^l_n)\,dS_n^{lk}
$$
$$
=\tfrac{1}{2}M^k_n(M^l_nu_n+g^l_n)\,d(S_n^{kl}+S_n^{lk})                                 \\
+\tfrac{1}{2}\left([M^l_n,M^k_n]u_n+M^k_ng^l_n-M^l_ng^k_n\right)\,dS_n^{kl},
$$
where $[A,B]=BA-AB$. Thus using Remark \ref{Rem:Sn and An} we get
$$
M^k_n(M^l_nu_n+g^l_n)\,dS_n^{kl} = -\tfrac{1}{2}M^k_n(M^l_nu_n+g^l_n)\,dq_n^{kl}
$$
$$
+\tfrac{1}{2}M^k_n(M^l_nu_n+g^l_n)\,d(R_n^{kl}+R_n^{lk})
+\tfrac{1}{2}\left([M^l_n,M^k_n]u_n+M^k_ng^l_n-M^l_ng^k_n\right)\,dS_n^{kl}
$$
$$
= -\tfrac{1}{2}M^k_n(M^l_nu_n+g^l_n)\,dq_n^{kl}
+\tfrac{1}{2}
\left(M^l_n\diamond
M^k_nu_n+M^kg^l_n+M^l_ng^k_n\right)\,dR_n^{kl}
$$
\begin{equation}                                                                    \label{bad term1}
+\tfrac{1}{2}\left([M^l_n,M^k_n]u_n
+M^k_ng^l_n-M^l_ng^k_n\right)\,dS_n^{kl},
\end{equation}
where we use the notation
$A\diamond B=BA+AB$ for linear operators $A$ and $B$.
Thus  equation \eqref{eq w1} can be rewritten as follows.

\begin{lem}                                                                                                           \label{lemma 4.20.1.12}
The process $w_n$ solves
$$
dw_n =(\cL_nw_n+F_n)\,dt
+(\tfrac{1}{2}[M^l_n,M^k_n]w_n+H_n^{kl})\,dS_n^{kl}
+(M^k_nw_n+\bar{G}_n^k)\,dW^k                                                           \nonumber \\
$$
\begin{equation}
+\tfrac{1}{2}M^k_n(M^l_nu_n+g^l_n)\,dq_n^{kl}
+\tfrac{1}{2}\left([M^k,M^l]u+M^lg^k-M^kg^l\right)\,dS_n^{kl},           \label{eq w2}
\end{equation}
where
\begin{align*}
H_n^{kl}=&\tfrac{1}{2}[M^l_n,M^k_n](M^r_nu_n+g^r_n)(W^r-W^r_n)                        \\
&+\tfrac{1}{2}([M^k_n,M^l_n]-[M^k,M^l])u
+\tfrac{1}{2}(M^l_ng^k_n-M^lg^k+M^kg^l-M^k_ng^l_n),\\
\bar{G}_n^k=&G_n^k-\tfrac{1}{2}
\left(M^k_n\diamond M^l_nu_n+M^l_ng^k_n+M^k_ng^l_n\right)
(W^l-W_n^l)
\end{align*}
\end{lem}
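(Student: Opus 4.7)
The plan is to derive \eqref{eq w2} from \eqref{eq w1} by a purely algebraic rewriting of the last term, $-M^k_n(M^l_nu_n+g^l_n)\,dS_n^{kl}$. The key tool is the identity \eqref{bad term1}, which has already been established in the text preceding the lemma and which expresses $M^k_n(M^l_nu_n+g^l_n)\,dS_n^{kl}$ as a sum of three contributions, one in $dq_n^{kl}$, one in $dR_n^{kl}$ and one in $dS_n^{kl}$. The work is to redistribute these three pieces into the form prescribed by \eqref{eq w2}.

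First, I substitute \eqref{bad term1} into \eqref{eq w1}. The $dq_n^{kl}$-piece $+\tfrac12 M^k_n(M^l_nu_n+g^l_n)\,dq_n^{kl}$ appears verbatim in \eqref{eq w2}. For the $dR_n^{kl}$-piece I use $dR_n^{kl}=(W^k-W^k_n)\,dW^l$; swapping the summation indices $k\leftrightarrow l$ and invoking the symmetry $M^l_n\diamond M^k_n=M^k_n\diamond M^l_n$, this piece becomes $-\tfrac12\bigl(M^k_n\diamond M^l_nu_n+M^l_ng^k_n+M^k_ng^l_n\bigr)(W^l-W^l_n)\,dW^k$, which by the definition of $\bar G_n^k$ equals $(\bar G_n^k-G_n^k)\,dW^k$. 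Adding this to $(M^k_nw_n+G_n^k)\,dW^k$ in \eqref{eq w1} gives the desired $(M^k_nw_n+\bar G_n^k)\,dW^k$ integral.

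For the remaining $dS_n^{kl}$-piece $-\tfrac12\bigl([M^l_n,M^k_n]u_n+M^k_ng^l_n-M^l_ng^k_n\bigr)\,dS_n^{kl}$, I insert $u_n=u-w_n-z_n$ inside the commutator. The $-w_n$ term yields $+\tfrac12[M^l_n,M^k_n]w_n\,dS_n^{kl}$, matching the first piece of the $dS_n^{kl}$-integrand of \eqref{eq w2}. The $-z_n$ term, expanded using $z_n=(M^r_nu_n+g^r_n)(W^r-W^r_n)$ and the fact that $[M^l_n,M^k_n]$ is a differential operator in $x$ only (so the scalar factor $(W^r-W^r_n)$ passes through), produces exactly the first line of $H_n^{kl}$. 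For the $-u$ contribution together with the remaining $-\tfrac12(M^k_ng^l_n-M^l_ng^k_n)\,dS_n^{kl}$, I add and subtract the limiting expressions: using $-[M^l_n,M^k_n]=[M^k_n,M^l_n]$, the sum splits as $\tfrac12\bigl([M^k,M^l]u+M^lg^k-M^kg^l\bigr)\,dS_n^{kl}$ (the last term of \eqref{eq w2}) plus the two residuals $\tfrac12\bigl([M^k_n,M^l_n]-[M^k,M^l]\bigr)u\,dS_n^{kl}$ and $\tfrac12\bigl(M^l_ng^k_n-M^lg^k+M^kg^l-M^k_ng^l_n\bigr)\,dS_n^{kl}$, which are precisely the second and third lines of $H_n^{kl}$.

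Assembling the pieces recovers \eqref{eq w2}. The argument is entirely formal bookkeeping and I do not expect any essential obstacle; the only real care is keeping the convention $[A,B]=BA-AB$ straight and executing the index swap $k\leftrightarrow l$ cleanly in the $dR_n^{kl}$ step, since this is where the symmetry of $\diamond$ is used to match the definition of $\bar G_n^k$.
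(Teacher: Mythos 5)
Your proposal is correct and follows essentially the same route as the paper: substitute \eqref{bad term1} into \eqref{eq w1}, absorb the $dR_n^{kl}$ term into $\bar G_n^k\,dW^k$ (via $dR_n^{kl}=(W^k-W^k_n)\,dW^l$ and the $k\leftrightarrow l$ swap, exactly as you describe), and then split $u_n=u-w_n-z_n$ in the commutator and add-and-subtract the limiting operators to produce $H_n^{kl}$ and the final $dS_n^{kl}$ term. The bookkeeping, including the sign convention $[A,B]=BA-AB$ and the role of the $\diamond$ symmetry, all lines up with the paper's computation.
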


\begin{proof}
Plugging \eqref{bad term1} into \eqref{eq w1} we get
\begin{align*}
    dw_n = &(\cL_nw_n+F_n)\,dt+(M^k_nw_n+\bar
G_n^k)\,dW^k                                                                      \\
&+\tfrac{1}{2}M^k_n(M^l_nu_n+g^l_n)\,dq_n^{kl}
-\tfrac{1}{2}\{[M^l_n,M^k_n]u_n+M^k_ng^l_n-M^l_ng^k_n\}\,dS_n^{kl}                      \\
=&(\cL_nw_n+F_n)\,dt+(M^k_nw_n+\bar G_n^k)\,dW^k                                   \\
     &+\tfrac{1}{2}M^k_n(M^l_nu_n+g^l_n)\,dq_n^{kl}
+\left(\tfrac{1}{2}[M^l_n,M^k_n]w_n+M^l_ng^k_n-M^k_ng^l_n\right)\,dS_n^{kl}           \\
     &-\tfrac{1}{2}[M^l_n,M^k_n]u\,dS_n^{kl}
     +\tfrac{1}{2}[M^l_n,M^k_n](M^r_nu_n+g^r_n)(W^r-W_n^r)\,dS_n^{kl}                   \\
     = &(\cL_nw_n+F_n)\,dt+(\tfrac{1}{2}[M^l_n,M^k_n]w_n+H_n^{kl})\,dS_n^{kl}
     +(M^k_nw_n+\bar{G}_n^k)\,dW^k                                           \\
     &+\tfrac{1}{2}M^k_n(M^l_nu_n+g^l_n)\,dq_n^{kl}+\tfrac{1}{2}
     \{[M^k,M^l]u+M^lg^k-M^kg^l\}\,dS_n^{kl}.
\end{align*}
\end{proof}

In the same way as \eqref{13.7.3.12} is proved,  we can easily
get
\begin{equation}                                                                              \label{6.22.1.12}
\int_0^T|\bar G_n|_{m-1}^2\,dt=O(n^{-2\kappa}),
\quad \sup_{t\leq T}|H_n^{kl}(t)|^2_m=O(n^{-2\kappa})
\quad\text{for $\kappa<\gamma$ }
\end{equation}
almost surely for all $k,l=1,...,d_1$.
Finally we rewrite \eqref{eq w2} as an equation for $v_n=w_n-r_n$, where
\begin{equation*}                                                                                                            \label{eq r}
r_n=\tfrac{1}{2}M_n^k(M^l_nu_n+g^l_n)q_n^{kl}+\tfrac{1}{2}
\{[M^k,M^l]u+M^kg^l-M^lg^k\}S_n^{kl}.
\end{equation*}
Notice that by \eqref{2.22.1.12} and Remark \ref{Rem:Sn and An},
$r_n$ satisfies \eqref{6.6.3.12} in place of $z_n$.
\begin{lem}                                                                                                \label{lemma 4.3.3.12}
The process $v_n$ solves
\begin{align}
    dv_n =& (\cL_nv_n+\widetilde{F}_n)\,dt
    +(\tfrac{1}{2}[M^k_n,M^l_n]v_n+\widetilde{H}_n^{kl})\,dS_n^{kl}
+(M^k_nv_n+\widetilde{G}_n^k)\,dW^k                                                 \nonumber\\
     & -\tfrac{1}{2}M^k_nM^l_n(M^r_nu_n+g^r_n)(W^k-W_n^k)\,dB_n^{lr},           \label{9.22.1.12}
\end{align}
where $B_n^{lr}$ is as in \eqref{def Bn} and
\begin{align*}
\tilde{F}_n=&F_n+\cL_nr_n-\tfrac{1}{2}M^k_nM^l_n(L_nu_n+f_n)q_n^{kl}
-\tfrac{1}{2}[M^k,M^l](\cL u+\tfrac{1}{2}M^rg^r+f)S_n^{kl},                    \\
\widetilde{G}_n^k=&\bar G_n+M^k_nr_n
-\tfrac{1}{2}[M^r,M^l](M^ku+g^k)S_n^{rl},
\\
\widetilde{H}_n^{kl}=&H_n^{kl}+\tfrac{1}{2}[M^l_n,M^k_n]r_n.
\end{align*}
\end{lem}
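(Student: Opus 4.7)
The plan is to verify \eqref{9.22.1.12} by forming $dv_n=dw_n-dr_n$, starting from the expression for $dw_n$ given in Lemma \ref{lemma 4.20.1.12} and computing $dr_n$ via the product rule (It\^o's formula) applied to each of the two summands in $r_n = r_n^{(1)}+r_n^{(2)}$, where
$$
r_n^{(1)} := \tfrac{1}{2}M_n^k(M_n^l u_n + g_n^l)q_n^{kl},\qquad r_n^{(2)} := \tfrac{1}{2}\bigl([M^k,M^l]u+M^kg^l-M^lg^k\bigr)S_n^{kl}.
$$
For $r_n^{(1)}$ observe that $M_n^k(M_n^l u_n+g_n^l)$ is a continuous process of bounded variation in $t$, since $u_n$ solves \eqref{eq for un} and $W_n$ has finite variation. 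Consequently its quadratic cross-variation with $q_n^{kl}$ vanishes, and the product rule together with \eqref{eq for un} yields
\begin{align*}
dr_n^{(1)} =\,& \tfrac{1}{2}M_n^k M_n^l(L_n u_n+f_n)q_n^{kl}\,dt + \tfrac{1}{2}M_n^k M_n^l(M_n^r u_n+g_n^r)q_n^{kl}\,dW_n^r \\
&+ \tfrac{1}{2}M_n^k(M_n^l u_n+g_n^l)\,dq_n^{kl}.
\end{align*}
The key algebraic identity is $q_n^{kl}\,dW_n^r=(W^k-W_n^k)(W^l-W_n^l)\,dW_n^r=(W^k-W_n^k)\,dB_n^{lr}$, which converts the $dW_n^r$-type term into the $dB_n^{lr}$-integral that appears, after the subtraction, as the last term on the right-hand side of \eqref{9.22.1.12}.

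For $r_n^{(2)}$ I would use the It\^o form of the SPDE for the limit $u$,
$$
du = \bigl(\cL u + \tfrac{1}{2}M^r g^r + f\bigr)\,dt + (M^r u + g^r)\,dW^r,
$$
valid because the hypotheses of Theorem \ref{theorem main1} make $M^k$ and $g^k$ time-independent. Since $S_n^{kl}$ has finite variation, the cross-bracket again vanishes and
\begin{align*}
dr_n^{(2)} =\,& \tfrac{1}{2}[M^k,M^l]\bigl(\cL u+\tfrac{1}{2}M^r g^r+f\bigr)S_n^{kl}\,dt + \tfrac{1}{2}[M^k,M^l](M^r u+g^r)S_n^{kl}\,dW^r \\
&+ \tfrac{1}{2}\bigl([M^k,M^l]u+M^kg^l-M^lg^k\bigr)\,dS_n^{kl}.
\end{align*}

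It then remains to form $dw_n-dr_n^{(1)}-dr_n^{(2)}$ using Lemma \ref{lemma 4.20.1.12}. The $dq_n^{kl}$-terms cancel exactly between $dw_n$ and $dr_n^{(1)}$. Substituting $w_n=v_n+r_n$ into the $dt$-, $dW^k$- and $dS_n^{kl}$-coefficients of $dw_n$ splits off the desired $\cL_n v_n$, $M_n^k v_n$, and $\tfrac{1}{2}[M^k_n,M^l_n]v_n$ pieces, while the leftover $\cL_n r_n$, $M^k_n r_n$, and $\tfrac{1}{2}[M^l_n,M^k_n]r_n$ combine with the drift, $dW^k$- and $dS_n^{kl}$-contributions from $-dr_n^{(1)}-dr_n^{(2)}$ to form $\widetilde F_n$, $\widetilde G_n^k$, and $\widetilde H_n^{kl}$ as stated (after a dummy-index relabelling to place the $dW^r$-term from $dr_n^{(2)}$ in the form $-\tfrac{1}{2}[M^r,M^l](M^k u+g^k)S_n^{rl}\,dW^k$). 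The main obstacle is the bookkeeping of summation indices and the sign conventions for the antisymmetric commutator $[M^k_n,M^l_n]=-[M^l_n,M^k_n]$; otherwise the computation is routine, relying only on the product rule, the identity for $q_n^{kl}\,dW_n^r$ recorded above, and the splitting \eqref{bad term1} already employed in the proof of Lemma \ref{lemma 4.20.1.12}.
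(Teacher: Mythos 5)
Your proposal is correct and follows essentially the same route as the paper: substitute $w_n=v_n+r_n$ into the equation of Lemma \ref{lemma 4.20.1.12}, compute $dr_n$ by the product rule using $du_n$ and the It\^o form of $du$ (with vanishing cross-brackets, since $M_n^k(M_n^lu_n+g_n^l)$ and $S_n^{kl}$ are of finite variation), cancel the $dq_n^{kl}$ terms, and convert $q_n^{kl}\,dW_n^r=(W^k-W_n^k)\,dB_n^{lr}$ to produce the final term. The paper's proof is simply a more compressed display of this same computation.
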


\begin{proof}
Indeed,
\begin{align*}
    dv_n
    =& (\cL_n v_n+F_n)\,dt
    +(\tfrac{1}{2}[M_n^l,M^k_n]v_n+\tilde H_n^{kl})\,dS_n^{kl}
    +(M^k_nv_n+\bar{G}_n^k)\,dW^k                                                                              \\
    &+\cL_nr_n\,dt+M^k_nr_n\,dW^k                                                                               \\
     &-\tfrac{1}{2}M^k_nM^l_n(L_nu_n+f_n)q_n^{kl}\,dt
     -\tfrac{1}{2}M^k_nM^l_n(M^r_nu_n+g^r_n)q_n^{kl}\,dW_n^r                                    \\
     &-\tfrac{1}{2}[M^k_n,M^l_n](\cL u+\tfrac{1}{2}M^rg^r+f)S_n^{kl}\,dt
-\tfrac{1}{2}[M^k_n,M^l_n](M^ru+g^r)S_n^{kl}\,dW^r                                                       \\
     &= (\cL v_n+\tilde F_n)\,dt
+(\tfrac{1}{2}[M^l_n,M^k_n]v_n+\tilde H_n^{kl})\,dS_n^{kl}
     +(M^k_n v_n+\tilde{G}_n^k)\,dW^k \\
     &-\tfrac{1}{2}M^k_nM^l_n(M^r_nu_n+g^r_n)(W^k-W^k_n)\,dB_n^{lr}.
     \end{align*}
\end{proof}
Making use of \eqref{13.7.3.12}, \eqref{6.22.1.12} and \eqref{7.22.1.12},
we easily obtain  that for $\kappa<\gamma$
\begin{equation}                                                                                     \label{8.22.1.12}
\int_0^T(|\tilde F_n|^2_{m-1}+|\tilde G_n^k|_{m}^2)\,dt=O(n^{-2\kappa}),
\quad \sup_{t\leq T}|\tilde H_n^{kl}(t)|^2_m=O(n^{-2\gamma})
\end{equation}
almost surely for $k,l=1,...,d_1$.
Hence we finish the proof of the theorem by applying
Theorem \ref{theorem:decay}
with $l=m$ to equation \eqref{9.22.1.12} and using \eqref{10.22.1.12} for $z_n$ and $r_n$.
\hfill $\square$

\subsection{Proof of Theorem \ref{theorem main2}}
We follow the proof of Theorem \ref{theorem main1} with the
necessary changes. By a well-known
theorem on degenerate stochastic PDEs
from \cite{KR},
$u$ is an $H^{m+2}$-valued
weakly continuous process,
and by Theorem \ref{theorem growth deg}
with $l=m+4$ we have
\begin{equation}                                                                            \label{4.3.3.12}
\sup_{t\leq T}|u_n|^2_{m+4}
=O(n^{\varepsilon})\quad\text{a.s. for $\varepsilon>0$}.
\end{equation}
Clearly, $u-u_n$ satisfies equation \eqref{4.22.1.12},
and
\begin{equation}                                                                              \label{5.3.3.12}
\int_0^T(|\bar f_n|_{m}^2+|\bar g_n|^2_{m+1})\,dt=O(n^{-2\gamma}).
\end{equation}
Moreover, Lemmas \ref{lemma eq vn 1}, \ref{lemma 4.20.1.12}
and \ref{lemma 4.3.3.12}
remain valid, and
due to  \eqref{4.3.3.12}, \eqref{5.3.3.12} and our assumptions,
we have for each $\kappa<\gamma$
\begin{equation}                                                                              \label{6.3.3.12}
\int_0^T(|F_n|_{m}^2+|G_n|^2_{m+1})\,dt=O(n^{-2\kappa}),
\end{equation}
\begin{equation}                                                                              \label{7.3.3.12}
\int_0^T|\bar G_n|_{m+1}^2\,dt=O(n^{-2\kappa}),
\quad \sup_{t\leq T}|H_n^{kl}(t)|^2_m=O(n^{-2\kappa}),
\end{equation}
\begin{equation}                                                                                     \label{8.3.3.12}
\int_0^T|\tilde F_n|^2_{m}+|\tilde G_n|_{m+1}^2\,dt=O(n^{-2\kappa}),
\quad \sup_{t\leq T}|\tilde H_n^{kl}(t)|^2_m=O(n^{-2\kappa})
\end{equation}
almost surely for $k,l=1,...,d_1$.
Note also that $r_n$ and $z_n $ satisfy \eqref{10.22.1.12}.
Hence we finish the proof of the theorem by applying
Theorem \ref{theorem 1.20.1.12}
with $l=m$ to equation \eqref{9.22.1.12}.
\hfill $\square$

Now we prove our main results in the case when the coefficients
and the free terms depend on $t$.

\subsection{Proof of Theorem \ref{theorem main1t}}
We follow the proof of Theorem  \ref{theorem main1} with the necessary
changes. As before, \eqref{3.22.1.12} and \eqref{2.22.1.12} hold.
Now $u-u_n$ satisfies equation \eqref{4.22.1.12} with an additional term,
$$
\frac{1}{2}\sum_{k=1}^{d_1}\big(M^{k(k)}u+g^{k(k)}\big)\,dt,
$$
added to the right-hand side of \eqref{4.22.1.12}. Thus to get the analogue
of Lemma \ref{lemma eq vn 1} we set
$$
N_n=\frac{1}{2}\sum_{k=1}^{d_1}M_n^{k(k)},\quad
\bar{\mathcal L}_n=\mathcal L_n+N_n
=L_n+\frac{1}{2}\Big(M^{k}_nM^{k}_n+\sum_{k=1}^{d_1}M_n^{k(k)}\Big),
$$
$$
M_n^{k(l)}=b^{ik(l)}_nD_i+b^{k(l)}_n, \quad M^{kl}=b^{ik(l)}D_i+b^{k(l)}
$$
for $k=1,...,d_1$, $l=0,...,d_1$ and $n\geq1$. Then for
\begin{equation}                                                                        \label{12.7.3.12}
w_n=u-u_n-z_n,\quad z_n=(M_n^k u_n+g^k_n)(W^k-W^k_n)
\end{equation}
the corresponding lemma reads as follows.

\begin{lem}                                                                       \label{lemma eq vn 1t}
The process $w_n$ solves
\begin{align*}
dw_n=&(\bar\cL_nw_n+\bar F_n)\,dt+(M^k_nw_n+G_n^k)\,dW^k                                      \nonumber\\
&-M^k_n(M^l_nu_n+g^l_n)\,dS_n^{kl}
-(M^{k(l)}_nu_n+g^{k(l)}_n)\,dS_n^{kl},
\end{align*}
where
\begin{align}
\bar F_n=&F_n+\frac{1}{2}\sum_{k=1}^{d_1}(M^{k(k)}-M_n^{k(k)})u
+\frac{1}{2}\sum_{k=1}^{d_1}(g^{k(k)}-g_n^{k(k)})                                           \nonumber\\
&-(M^{k(0)}u_n+g^{k(0)})(W^k-W^k_n)
+N_nz_n,                                                                                                                         \label{6.5.3.12}
\end{align}
and $F_n$ and $G_n$ are defined in Lemma \ref{lemma eq vn 1}.
\end{lem}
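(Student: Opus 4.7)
\medskip
\textbf{Proof proposal.} The plan is to mimic the proof of Lemma \ref{lemma eq vn 1}, with two structural modifications that reflect the $t$-dependence of $M_n^k$ and $g_n^k$. First, the equation \eqref{4.22.1.12} for $u-u_n$ now carries an extra drift $\tfrac12\sum_k(M^{k(k)}u+g^{k(k)})\,dt$ coming from the new Stratonovich-to-It\^o correction displayed just before Theorem \ref{theorem main1t}. Second, the It\^o differential of $z_n=(M_n^ku_n+g_n^k)(W^k-W_n^k)$ picks up two additional contributions because $M_n^k$ and $g_n^k$ are now semimartingales in $t$ via Assumption \ref{assumption 5.10.1.12}. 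The repackaging of the result will then rest on the same identity $(W^k-W_n^k)\,dW_n^l=dS_n^{kl}+\tfrac12\delta_{kl}\,dt$ from Remark \ref{Rem:Sn and An} that drove the original argument.

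The key computation is the It\^o differential of $M_n^ku_n+g_n^k$. Using the SPDE for $u_n$ and Assumption \ref{assumption 5.10.1.12}, and noting that all quadratic covariations vanish because $W_n^p$ and $W_n^l$ are of finite variation, the product rule yields
\begin{align*}
d(M_n^ku_n+g_n^k)=\;&(M_n^{k(0)}u_n+g_n^{k(0)})\,dt+(M_n^{k(p)}u_n+g_n^{k(p)})\,dW_n^p\\
&+M_n^k(L_nu_n+f_n)\,dt+M_n^k(M_n^lu_n+g_n^l)\,dW_n^l.
\end{align*}
Integration by parts for $z_n$ (again with vanishing covariations, since $W^k-W^k_n$ is orthogonal to $W_n^p$ and $W_n^l$ in the covariation sense) then expresses $dz_n$ as the right-hand side of \eqref{3.20.1.12} plus two new pieces $(M_n^{k(0)}u_n+g_n^{k(0)})(W^k-W_n^k)\,dt$ and $(M_n^{k(p)}u_n+g_n^{k(p)})(W^k-W_n^k)\,dW_n^p$. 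Forming $dw_n=d(u-u_n)-dz_n$ and substituting $u-u_n=w_n+z_n$ in the $L_n$ and $M_n^k$ contributions, the latter gets absorbed into $G_n^k=\bar g_n^k+M_n^kz_n$. I would then convert both the old cross term $M_n^k(M_n^lu_n+g_n^l)(W^k-W_n^k)\,dW_n^l$ and the new term $(M_n^{k(l)}u_n+g_n^{k(l)})(W^k-W_n^k)\,dW_n^l$ via the $dS_n^{kl}$ identity, producing the two $dS_n^{kl}$ integrals announced in the statement and leaving two extra drifts $\tfrac12 M_n^k(M_n^ku_n+g_n^k)\,dt$ and $\tfrac12(M_n^{k(k)}u_n+g_n^{k(k)})\,dt$.

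The final step is to verify that the surviving drift equals $\bar{\cL}_nw_n+\bar F_n$. The first of these corrections, together with $\tfrac12(M^kM^ku+M^kg^k)\,dt$ from the equation for $d(u-u_n)$ and the splitting $u-u_n=w_n+z_n$, reproduces verbatim the decomposition already carried out in Lemma \ref{lemma eq vn 1} and contributes $\tfrac12 M_n^kM_n^kw_n\,dt$ (to upgrade $L_n$ to $\cL_n$) together with $F_n-M_n^k(L_nu_n+f_n)(W^k-W_n^k)\,dt$. The second correction, combined with the new Stratonovich drift $\tfrac12\sum_k(M^{k(k)}u+g^{k(k)})\,dt$ and the same splitting $u-u_n=w_n+z_n$, produces $N_nw_n$ (upgrading $\cL_n$ to $\bar{\cL}_n$) plus the discrepancies $\tfrac12\sum_k(M^{k(k)}-M_n^{k(k)})u$ and $\tfrac12\sum_k(g^{k(k)}-g_n^{k(k)})$ and the correction $N_nz_n$ appearing in \eqref{6.5.3.12}. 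The remaining new drift $-(M_n^{k(0)}u_n+g_n^{k(0)})(W^k-W_n^k)\,dt$ is absorbed into $\bar F_n$, completing the match with \eqref{6.5.3.12}. The main obstacle is precisely this last bookkeeping: each $\tfrac12\delta_{kl}\,dt$ generated by a $dS_n^{kl}$ conversion must be tracked so that the $w_n$-terms assemble cleanly into $\bar{\cL}_nw_n$ while every leftover piece lines up with one of the four summands defining $\bar F_n$. Once the cancellations are verified, the stated decomposition of $dw_n$ follows.
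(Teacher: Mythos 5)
Your approach is the same as the paper's, which simply observes that the extra $t$-dependence of $M_n^k$ and $g_n^k$ encoded in Assumption \ref{assumption 5.10.1.12} adds the two terms $(M_n^{k(0)}u_n+g_n^{k(0)})(W^k-W_n^k)\,dt$ and $(M_n^{k(l)}u_n+g_n^{k(l)})(W^k-W_n^k)\,dW_n^l$ to the right-hand side of \eqref{3.20.1.12}, after which the $(W^k-W_n^k)\,dW_n^l=dS_n^{kl}+\tfrac12\delta_{kl}\,dt$ conversion and the reassembly of the drift into $\bar\cL_nw_n+\bar F_n$ proceed exactly as in Lemma \ref{lemma eq vn 1}; your identification of these terms and of how the $\tfrac12\delta_{kl}$ corrections combine with the new Stratonovich drift $\tfrac12\sum_k(M^{k(k)}u+g^{k(k)})\,dt$ via $u-u_n=w_n+z_n$ to produce $N_nw_n$ and the remainders in \eqref{6.5.3.12} is correct. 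One small bookkeeping slip: where you write that the first correction contributes $F_n-M_n^k(L_nu_n+f_n)(W^k-W_n^k)\,dt$, note that $F_n$ as defined in Lemma \ref{lemma eq vn 1} already contains $-M_n^k(L_nu_n+f_n)(W^k-W_n^k)$, so this term should not be subtracted a second time; this does not affect the substance of your argument.
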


\begin{proof}
We need only notice that for $z_n$ equation \eqref{3.20.1.12}
holds with a new term,
$$
(M^{k(0)}_nu_n+g^{k(0)})(W^k-W_n^k)\,dt
+(M^{k(l)}_nu_n+g^{k(i)}_n)(W^{k}-W^{k}_n)\,dW^l_n?,
$$
added to its right-hand side.
\end{proof}
Hence we get the following modification of Lemma \ref{lemma 4.20.1.12}

\begin{lem}                                                                             \label{lemma 2.4.3.12}
The process $w_n$ solves
$$
dw_n =(\bar\cL_nw_n+\bar F_n)\,dt
+(\tfrac{1}{2}([M^l_n,M^k_n]+M_n^{k(l)})w_n+\bar H_n^{kl})\,dS_n^{kl}
$$
$$
+(M^k_nw_n+\bar{G}_n^k)\,dW^k+\tfrac{1}{2}M^k_n(M^l_nu_n+g^l_n)\,dq_n^{kl}
$$
\begin{equation}
+\tfrac{1}{2}\left(([M^k,M^l]-M^{k(l)})u+M^lg^k-M^kg^l-g^{k(l)}\right)\,dS_n^{kl},           \label{eq w2t}
\end{equation}
where
\begin{align}
\bar H_n^{kl}=&H^{kl}_n+\tfrac{1}{2}M_n^{k(l)}(M^r_nu_n+g^r_n)(W^r-W^r_n)          \nonumber   \\
&+\tfrac{1}{2}(M^{k(l)}-M^{k(l)}_n)u
+\tfrac{1}{2}(g^{k(l)}-g_n^{k(l)}),                                                                             \label{7.5.3.12}
\end{align}
and $H_n^{kl}$ and $\bar{G}_n^k$ are defined in Lemma \ref{lemma 4.20.1.12}.
\end{lem}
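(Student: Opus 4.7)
The plan is to start from the equation in Lemma~\ref{lemma eq vn 1t}, process each of its two bad $dS_n^{kl}$-terms, and reassemble the result in the form~\eqref{eq w2t}. The strategy mirrors the proof of Lemma~\ref{lemma 4.20.1.12}, the only substantive new step being the treatment of the extra ``time-dependence'' term $-(M_n^{k(l)}u_n+g_n^{k(l)})\,dS_n^{kl}$.

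For the bad term $-M_n^k(M_n^l u_n+g_n^l)\,dS_n^{kl}$ already present in the $t$-independent case, I would apply verbatim identity~\eqref{bad term1} from the proof of Lemma~\ref{lemma 4.20.1.12}. Using the symmetric-antisymmetric decomposition in $(k,l)$ together with Remark~\ref{Rem:Sn and An} (which gives $dS_n^{kl}+dS_n^{lk}=-dq_n^{kl}+dR_n^{kl}+dR_n^{lk}$), this rewriting produces: the $dq_n^{kl}$-term $\tfrac12 M_n^k(M_n^l u_n+g_n^l)\,dq_n^{kl}$ appearing in~\eqref{eq w2t}; the $dR$-pieces, which merge into $G_n^k\,dW^k$ via $dR_n^{kl}=(W^k-W_n^k)\,dW^l$ to form the $\bar G_n^k\,dW^k$ term; and the antisymmetric residue $-\tfrac12([M_n^l,M_n^k]u_n+M_n^k g_n^l-M_n^l g_n^k)\,dS_n^{kl}$. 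I would then substitute $u_n=u-w_n-z_n$ in this residue, producing the $\tfrac12[M_n^l,M_n^k]w_n\,dS_n^{kl}$ piece of the leading $w_n$-coefficient together with the $z_n$-correction $\tfrac12[M_n^l,M_n^k](M_n^r u_n+g_n^r)(W^r-W_n^r)$ that forms a summand of $H_n^{kl}$; adding and subtracting $\tfrac12[M^k,M^l]u$ and $\tfrac12(M^l g^k-M^k g^l)$ then distributes the remaining $u$- and $g$-pieces between $H_n^{kl}$ and the corresponding pieces of the last explicit line of~\eqref{eq w2t}.

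For the genuinely new bad term $-(M_n^{k(l)}u_n+g_n^{k(l)})\,dS_n^{kl}$, I would apply the analogous symmetric-antisymmetric decomposition in $(k,l)$, treating $M_n^{k(l)}$ as a ``doubly-indexed'' object in parallel with $M_n^k M_n^l$ from the first step. This yields, after substituting $u_n=u-w_n-z_n$, the $\tfrac12 M_n^{k(l)}w_n\,dS_n^{kl}$ piece that combines with the commutator to give the coefficient $\tfrac12([M_n^l,M_n^k]+M_n^{k(l)})$ on $w_n$ in~\eqref{eq w2t}; the $z_n$-correction $\tfrac12 M_n^{k(l)}(M_n^r u_n+g_n^r)(W^r-W_n^r)\,dS_n^{kl}$ that enters $\bar H_n^{kl}$; and, after adding and subtracting $M^{k(l)}u$ and $g^{k(l)}$, the $\tfrac12(M^{k(l)}-M_n^{k(l)})u+\tfrac12(g^{k(l)}-g_n^{k(l)})$ summands of $\bar H_n^{kl}$ together with the $-\tfrac12 M^{k(l)}u-\tfrac12 g^{k(l)}$ summands of the last line of~\eqref{eq w2t}.

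The main obstacle is the algebraic bookkeeping: one must check that after the two decompositions and the single substitution $u_n=u-w_n-z_n$, every term from the right-hand side of the Lemma~\ref{lemma eq vn 1t} equation lands in exactly one of the six groups in~\eqref{eq w2t} with the correct coefficient---in particular that the factors of $\tfrac12$ in the leading $w_n$-coefficient, in $\bar H_n^{kl}$ and in the explicit last line all agree with those produced by the symmetric/antisymmetric decomposition, and that the cross-cancellations between $H_n^{kl}$ and the last line (for the $[M^k,M^l]u$, $M^{k(l)}u$, $g^{k(l)}$ and $M^l g^k-M^k g^l$ contributions) hold identically. Once this algebra has been carried out no further analytic input is required.
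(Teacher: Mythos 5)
Your treatment of the first, already-present bad term $-M_n^k(M_n^l u_n+g_n^l)\,dS_n^{kl}$ correctly follows the proof of Lemma~\ref{lemma 4.20.1.12}, which is indeed the paper's implicit strategy. Your handling of the new bad term $-(M_n^{k(l)}u_n+g_n^{k(l)})\,dS_n^{kl}$, however, contains a genuine gap, and the conclusion you attribute to your method does not follow from it.

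There are two distinct problems. First, the symmetric--antisymmetric decomposition in $(k,l)$ does not produce $\tfrac{1}{2}M_n^{k(l)}$ in the $dS_n^{kl}$-coefficient: for the doubly-indexed array $a^{kl}:=M_n^{k(l)}u_n+g_n^{k(l)}$ the antisymmetric part is $\tfrac{1}{2}(a^{kl}-a^{lk})$, so the surviving $dS_n^{kl}$-piece would read $\tfrac{1}{2}\bigl((M_n^{k(l)}-M_n^{l(k)})u_n+g_n^{k(l)}-g_n^{l(k)}\bigr)\,dS_n^{kl}$, not $\tfrac{1}{2}(M_n^{k(l)}u_n+g_n^{k(l)})\,dS_n^{kl}$, while the symmetric part $\tfrac{1}{2}a^{kl}\,d(S_n^{kl}+S_n^{lk})$ would, via Remark~\ref{Rem:Sn and An}, generate extra $dq_n^{kl}$- and $dW^k$-contributions that appear nowhere in \eqref{eq w2t}, $\bar G_n^k$ or $\bar H_n^{kl}$. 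Second, and more to the point, the decomposition is not wanted here at all: in Lemma~\ref{lemma 4.20.1.12} it was introduced precisely to replace the second-order operator $M_n^kM_n^l$ acting on $w_n$ by the first-order commutator $[M_n^l,M_n^k]$, shunting the second-order symmetric part into $dq_n^{kl}$- and $dW^k$-terms; but $M_n^{k(l)}$ is already first order, so $M_n^{k(l)}w_n\,dS_n^{kl}$ is a perfectly acceptable coefficient as it stands. The correct treatment is the bare substitution $u_n=u-w_n-z_n$ (followed by adding and subtracting $M^{k(l)}u+g^{k(l)}$), which yields
$$
-(M_n^{k(l)}u_n+g_n^{k(l)})\,dS_n^{kl}=M_n^{k(l)}w_n\,dS_n^{kl}
+M_n^{k(l)}(M_n^ru_n+g_n^r)(W^r-W_n^r)\,dS_n^{kl}
$$
$$
+\bigl((M^{k(l)}-M_n^{k(l)})u+g^{k(l)}-g_n^{k(l)}\bigr)\,dS_n^{kl}
-\bigl(M^{k(l)}u+g^{k(l)}\bigr)\,dS_n^{kl},
$$
with coefficient $1$ on each of the new pieces. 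Comparing this with the stated lemma, the factor $\tfrac{1}{2}$ on $M_n^{k(l)}$ in the $w_n$-coefficient, on the extra summands of $\bar H_n^{kl}$ in \eqref{7.5.3.12}, and on $M^{k(l)}u+g^{k(l)}$ in the last line of \eqref{eq w2t} appears to be a slip in the paper; with those factors as printed the equation does not follow from Lemma~\ref{lemma eq vn 1t}. In any case, your proposed derivation reproduces neither version: it asserts an intermediate identity that the decomposition does not yield and silently drops the symmetric-part remainders.
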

Now we rewrite equation \eqref{eq w2t} as an equation for $\bar v_n:=w_n-\bar r_n$,
where
\begin{equation}                                                                                                       \label{11.7.3..12}
\bar r_n=r_n-\tfrac{1}{2}(M^{k(l)}u+g^{k(l)})S_n^{kl}
\end{equation}
$$
=\tfrac{1}{2}M_n^k(M^l_nu_n+g^l_n)q_n^{kl}
+\tfrac{1}{2}(([M^k,M^l]-M^{k(l)})u+M^lg^k-M^kg^l-g^{k(l)})S_n^{kl}.
$$
To this end we set
$$
\tilde f=f+\tfrac{1}{2}M^kg^k+\tfrac{1}{2}\sum_{k=1}^{d_1}g^{k(k)},
$$
and notice that
\begin{align*}
dM_n^k(M_n^lu_n+g^l_n)
=&M_n^kM^l_n(L_nu_n+f_n)\,dt+M_n^kM^l_n(M^j_nu_n+g^j_n)\,dW^j_n\\
&+T_n^{kl0}\,dt+T_n^{klj}\,dW^j_n,
\end{align*}
where
$$
T_n^{kl0}=(M_n^{k(0)}M_n^l+M_n^{k}M_n^{l(0)})u_n+M_n^{k(0)}g^{l}_n+M_n^kg^{l(0)}_n,
$$
$$
T_n^{klj}=(M_n^{k(j)}M_n^l+M_n^{k}M_n^{l(j)})u_n+M_n^{k(j)}g^{l}_n+M_n^kg^{l(j)}_n.
$$
Similarly,
\begin{align*}
d[M^k,M^l]u=&[M^k,M^l](\cL u+f+\tfrac{1}{2}M^jg^j)\,dt+[M^k,M^l](M^j+g^j)\,dW^j\\
&+P^{kl0}\,dt+P^{klj}\,dW^j,
\end{align*}
$$
d(-M^{k(l)}u+M^lg^k-M^kg^l-g^{k(l)})=U^{kl0}\,dt+U^{klj}\,dW^j,
$$
where $\cL=L+\tfrac{1}{2}M^kM^k$,
\begin{align*}
P^{kl0}=&[M^k,M^{l(0)}]u+[M^{k(0)},M^{l}]u+\sum_{j=1}^{d_1}[M^{k(j)},M^{l(j)}]u\\
&+([M^k,M^{l(j)}]+[M^{k(j)},M^{l}])(M^ju+g^j)\\
&+
\tfrac{1}{2}\sum_{j=1}^{d_1}[M^k,M^l](M^{j(j)}u+g^{j(j)}),\\
P^{klj}=&([M^k,M^{l(j)}]+[M^{k(j)},M^{l}])u,\\
U^{kl0}=&-M^{k(l0)}u-M^{k(l)}(\bar\cL u+\tilde f)-M^{k(lj)}(M^ju+g^j)\\
&+M^{l(0)}g^k+M^{l}g^{k(0)}+M^{l(j)}g^{k(j)}                 \\
&-M^{k(0)}g^l-M^{k}g^{l(0)}-M^{k(j)}g^{l(j)}-g^{k(l0)}                      ,\\
U^{klj}=&-M^{k(lj)}u-M^{k(l)}(M^j u+g^j)
+M^{l(j)}g^k+M^{l}g^{k(j)}                \\
&-M^{k(j)}g^l-M^{k}g^{l(j)}-g^{k(lj)}.                       \\
\end{align*}
Let $\tilde F_n$, $\tilde G^k_n$ and $\tilde H_n^{kl}$ be defined now as
in Lemma \ref{lemma 4.3.3.12}, but with $F_n$ and $G_n^k$ replaced
 there by
$\bar F_n$ and $\bar G_n^k$ in \eqref{6.5.3.12} and \eqref{7.5.3.12}, respectively.

Thus we have the following modification of Lemma \ref{lemma 4.3.3.12}.
\begin{lem}                                                                                                \label{lemma 12.8.3.12}
The process $\bar v_n=w_n-\bar r_n$ solves
\begin{align}
    d\bar v_n =& (\bar \cL_n\bar v_n+\hat{F}_n)\,dt
    +(\tfrac{1}{2}([M^k_n,M^l_n]+M^{k(l)})\bar v_n+\hat{H}_n^{kl})\,dS_n^{kl} \nonumber\\
&+(M^k_nv_n+\hat{G}_n^k)\,dW^k
      -\tfrac{1}{2}T^{klj}(W^k-W_n^k)\,dB_n^{lr},           \label{7.6.3.12}
\end{align}
where
\begin{align*}
\hat{F}_n=&\tilde F_n+N_nr_n-\tfrac{1}{2}\bar\cL_n(M^{k(l)}u+g^{k(l)})S_n^{kl}    \\
&-\tfrac{1}{2}T^{kl0}q^{kl}_n-\tfrac{1}{2}(P_n^{kl0}+U^{kl0}_n)S_n^{kl},                      \\
\hat{G}_n^k=&\widetilde G_n-\tfrac{1}{2}M^k_n(M^{j(l)}u+g^{j(l)})S_n^{jl}
-\tfrac{1}{2}(P^{jlk}+U^{jlk})S^{jl}_n
\\
\hat {H}_n^{kl}=&\tilde H_n^{kl}+\tfrac{1}{2}M^{k(l)}r_n-
\tfrac{1}{4}([M^l_n,M^k_n]+M^{kl}_n)(M^{k(l)}u+g^{k(l)})S_n^{kl}.
\end{align*}
\end{lem}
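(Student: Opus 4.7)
The plan is to derive \eqref{7.6.3.12} by forming $d\bar v_n=dw_n-d\bar r_n$, using Lemma \ref{lemma 2.4.3.12} for $dw_n$ and applying It\^o's product rule to each of the two summands of
\[
\bar r_n=\tfrac{1}{2}M_n^k(M^l_nu_n+g^l_n)q_n^{kl}+\tfrac{1}{2}X^{kl}S_n^{kl},
\]
where $X^{kl}:=([M^k,M^l]-M^{k(l)})u+M^lg^k-M^kg^l-g^{k(l)}$. The crucial structural simplification is that both $M_n^k(M_n^lu_n+g_n^l)$ and $S_n^{kl}=B_n^{kl}-\tfrac{1}{2}\delta_{kl}t$ are continuous processes of finite variation: the first because, under Assumption \ref{assumption 0.5.3.12}, the coefficients of $M_n^k$ evolve only against $dt$ and $dW_n$ while $u_n$ itself solves a pathwise equation driven by $dW_n$; the second by inspection of its definition. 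Hence their quadratic covariations with $q_n^{kl}$ and $X^{kl}$ vanish, and each product rule reduces to the Leibniz identity.

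First I would compute $d\bar r_n$ summand by summand. Combining the stated expansion $d(M_n^k(M_n^lu_n+g_n^l))=[M_n^kM_n^l(L_nu_n+f_n)+T_n^{kl0}]\,dt+[M_n^kM_n^l(M_n^ju_n+g_n^j)+T_n^{klj}]\,dW_n^j$ with the It\^o formula
\[
dq_n^{kl}=(W^k-W_n^k)(dW^l-dW_n^l)+(W^l-W_n^l)(dW^k-dW_n^k)+\delta_{kl}\,dt
\]
(using $\langle W^k-W_n^k,W^l-W_n^l\rangle_t=\delta_{kl}t$ since $W_n$ has finite variation), I obtain $d(\tfrac{1}{2}M_n^k(M_n^lu_n+g_n^l)q_n^{kl})$ as the sum of $\tfrac{1}{2}M_n^k(M_n^lu_n+g_n^l)\,dq_n^{kl}$, a drift carrying $[M_n^kM_n^l(L_nu_n+f_n)+T_n^{kl0}]q_n^{kl}/2$, and a $dW_n^j$ contribution $[M_n^kM_n^l(M_n^ju_n+g_n^j)+T_n^{klj}]q_n^{kl}/2$. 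The stated expansions of $d([M^k,M^l]u)$ and $d(-M^{k(l)}u+M^lg^k-M^kg^l-g^{k(l)})$ together with $dS_n^{kl}=(W^k-W_n^k)\,dW_n^l-\tfrac{1}{2}\delta_{kl}\,dt$ resolve the second summand into $\tfrac{1}{2}X^{kl}\,dS_n^{kl}$, a $dt$ drift $\tfrac{1}{2}\{[M^k,M^l](\cL u+f+\tfrac{1}{2}M^jg^j)+P^{kl0}+U^{kl0}\}S_n^{kl}$, and a $dW^j$ piece $\tfrac{1}{2}\{[M^k,M^l](M^ju+g^j)+P^{klj}+U^{klj}\}S_n^{kl}$.

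Subtracting $d\bar r_n$ from the equation for $dw_n$ in Lemma \ref{lemma 2.4.3.12}, two critical cancellations occur: the $\tfrac{1}{2}M_n^k(M_n^lu_n+g_n^l)\,dq_n^{kl}$ term in $dw_n$ is annihilated by its counterpart in $d\bar r_n$, and the free $dS_n^{kl}$ piece $\tfrac{1}{2}X^{kl}\,dS_n^{kl}$ is annihilated likewise. Substituting $w_n=\bar v_n+\bar r_n$ inside $\bar\cL_nw_n$, $M_n^kw_n$ and $\tfrac{1}{2}([M_n^l,M_n^k]+M_n^{k(l)})w_n$ produces the $\bar v_n$-dependent structure on the right-hand side of \eqref{7.6.3.12}. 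The resulting $\bar\cL_n\bar r_n$, $M_n^k\bar r_n$ and $\tfrac{1}{2}([M_n^l,M_n^k]+M_n^{k(l)})\bar r_n$ contributions, together with the surviving drift and diffusion pieces from the previous step, are regrouped into $\hat F_n$, $\hat G_n^k$ and $\hat H_n^{kl}$; this uses the splitting $\bar r_n=r_n-\tfrac{1}{2}(M^{k(l)}u+g^{k(l)})S_n^{kl}$ from \eqref{11.7.3..12} so that the $\tilde F_n$, $\widetilde G_n^k$, $\tilde H_n^{kl}$ of Lemma \ref{lemma 4.3.3.12} appear explicitly in the displayed formulas. Finally, since the residual $dW_n^j$ integrand carries the factor $q_n^{kl}=(W^k-W_n^k)(W^l-W_n^l)$, the identity $(W^l-W_n^l)\,dW_n^j=dB_n^{lj}$ from \eqref{def Bn} converts the remaining $dW_n^j$ piece into an integral against $dB_n^{lj}$ with integrand proportional to $T_n^{klj}(W^k-W_n^k)$, yielding the last term of \eqref{7.6.3.12}.

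The main obstacle is combinatorial bookkeeping rather than analysis: one must correctly distribute the five auxiliary contributions $\bar\cL_n\bar r_n$, $M_n^k\bar r_n$, $\tfrac{1}{2}([M_n^l,M_n^k]+M_n^{k(l)})\bar r_n$, and the two surviving Leibniz remainders across the $dt$, $dW^k$ and $dS_n^{kl}$ integrators so as to reproduce exactly the formulas stated for $\hat F_n$, $\hat G_n^k$ and $\hat H_n^{kl}$. Some vigilance is also required when converting the $dW_n^j$-driven terms to $dW^j$- and $dB_n^{lj}$-driven ones (via $dW_n^j=dW^j-d(W^j-W_n^j)$ combined with the $B_n$-identity applied to the $(W^l-W_n^l)$-weighted factor), in order not to lose the $dW^j$ contributions that are absorbed into $\hat G_n^k$ through the $P^{jlk}+U^{jlk}$ and $M_n^k(M^{j(l)}u+g^{j(l)})$ corrections.
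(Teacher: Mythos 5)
Your approach is exactly the paper's own: write $d\bar v_n=dw_n-d\bar r_n$, feed in Lemma~\ref{lemma 2.4.3.12} for $dw_n$, apply the product rule to the two summands of $\bar r_n$ (Leibniz form, since $M_n^k(M_n^lu_n+g_n^l)$ and $S_n^{kl}$ are of finite variation), observe that the $\tfrac12 M_n^k(M_n^lu_n+g_n^l)\,dq_n^{kl}$ and $\tfrac12 X^{kl}\,dS_n^{kl}$ terms cancel, substitute $w_n=\bar v_n+\bar r_n$ in the linear terms, regroup using $\bar r_n=r_n-\tfrac12(M^{k(l)}u+g^{k(l)})S_n^{kl}$, and convert the surviving $dW_n^j$ residual into a $dB_n$-integral. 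This is the paper's treatment of Lemma~\ref{lemma 4.3.3.12} transposed to the time-dependent setting, and the structural observations (vanishing covariations, $dq_n^{kl}$, $dS_n^{kl}$, the $(W^l-W_n^l)\,dW_n^j=dB_n^{lj}$ identity) are all used correctly.

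One point merits explicit attention. Your own product-rule computation correctly identifies the $dW_n^j$ residual coming from $-\tfrac12\,d\big(M_n^k(M_n^lu_n+g_n^l)q_n^{kl}\big)$ as having integrand $-\tfrac12\big[M_n^kM_n^l(M_n^ju_n+g_n^j)+T_n^{klj}\big]q_n^{kl}$, so that after converting via $q_n^{kl}\,dW_n^j=(W^k-W_n^k)\,dB_n^{lj}$ the last term of \eqref{7.6.3.12} should read $-\tfrac12\big[M_n^kM_n^l(M_n^ju_n+g_n^j)+T_n^{klj}\big](W^k-W_n^k)\,dB_n^{lj}$. In your closing sentence you instead describe the integrand as ``proportional to $T_n^{klj}(W^k-W_n^k)$'', matching the paper's displayed $-\tfrac12 T^{klj}(W^k-W_n^k)\,dB_n^{lr}$ (note also the inconsistent superscripts $j$ versus $r$ there). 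Comparing with the analogous final term of Lemma~\ref{lemma 4.3.3.12}, namely $-\tfrac12 M_n^kM_n^l(M_n^ru_n+g_n^r)(W^k-W_n^k)\,dB_n^{lr}$, which is precisely the $T_n\equiv0$ specialization, it is apparent that the paper's statement of Lemma~\ref{lemma 12.8.3.12} drops the $M_n^kM_n^l(M_n^ju_n+g_n^j)$ contribution. Your derivation gives the correct full integrand; you should keep it rather than collapsing it to agree with a formula that is evidently misprinted.
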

We can verify that \eqref{8.22.1.12} holds with $\hat F_n$, $\hat G_n^k$
and $\hat H^{kl}$ in place of $\tilde F_n$, $\tilde G_n^k$ and
$\tilde H^{kl}$, respectively.
We can also see that $z_n$ and ${\bar r}_n$ satisfy \eqref{6.6.3.12}.
Hence we finish the proof by applying Theorem \ref{theorem:decay}
with $l=m$ to equation \eqref{7.6.3.12}.
\subsection{Proof of Theorem \ref{theorem 5.6.3.12}}
We get Lemma \ref{lemma 12.8.3.12} in the same way as Lemma \ref{lemma 4.3.3.12} is proved, and we can also
see that
\begin{equation*}                                                                                     \label{11.6.3.12}
\int_0^T(|\hat F_n|^2_{m}+|\hat G_n^k|_{m+1}^2)\,dt=O(n^{-2\kappa}),
\quad \sup_{t\leq T}|\hat  H_n^{kl}(t)|^2_m=O(n^{-2\kappa})
\end{equation*}
for each $\kappa<\gamma$, almost surely for $k,l=1,...,d_1$, where
$\hat F_n$, $ \hat G_n^k$ and $\hat  H_n^{kl}$ are defined in
Lemma \ref{lemma 4.3.3.12}. We can also verify that for $z_n$
and $\bar r_n$, defined in \eqref{12.7.3.12} and \eqref{11.7.3..12},
we have
$$
\sup_{t\leq T}|z_n(t)|_m+\sup_{t\leq T}|\bar r_n(t)|_m=O(n^{-\kappa})
\quad\text{for each $\kappa<\gamma$}.
$$
Hence we obtain the theorem by applying Theorem \ref{theorem 1.20.1.12}
with $l=m$ to equation \eqref{7.6.3.12}.

\bigskip

\small{\textbf{Acknowledgments.}
The first author is grateful to the organisers of the
``Seventh Seminar on Stochastic Analysis, Random Fields and Applications"
for the invitation and for the possibility of giving a talk on the results of this paper,
which were obtained while the authors were staying
in the Isaac Newton Institute of Cambridge University in April 2010.
The second author is also grateful to the University of Edinburgh's School of
Mathematics for their kind hospitality during several visits.}
\bigskip

\end{document}